\colorlet{lgray}{white!85!black}
\colorlet{lred}{white!75!red}
\newtheorem{theorem}{Theorem} 
\newtheorem*{theorem*}{Theorem}
\newtheorem{lemma}[theorem]{Lemma}
\newtheorem{definition}[theorem]{Definition}
\newtheorem{proposition}[theorem]{Proposition}
\theoremstyle{remark}
\newtheorem{remark}[theorem]{Remark}
\numberwithin{equation}{section} \numberwithin{theorem}{section}
\newcommand{\la}{\lambda}
\newcommand{\prech}{\prec_{\mathsf{h}}}
\newcommand{\precv}{\prec_{\mathsf{v}}}
\newcommand{\precd}{\prec_{\mathsf{\leq d}}}
\newcommand{\nprecd}{\prec_{\mathsf{> d}}}
\title[Macdonald-positive specializations of the algebra of symmetric functions]
{Macdonald-positive specializations of the algebra of symmetric functions: Proof of the Kerov conjecture}
\author{Konstantin Matveev}
\address[Konstantin Matveev]{ Department of Mathematics, Brandeis University, Waltham, MA, USA. E-mail: kosmatveev@gmail.com}
\begin{document}

\begin{abstract}
We prove the classification of homomorphisms from the algebra of symmetric functions to $\mathbb{R}$ with non-negative values on Macdonald symmetric functions $P_{\lambda}$, that was  conjectured by S.V. Kerov in 1992. 
\end{abstract}

\maketitle

\tableofcontents


\section{Introduction}
\subsection{Edrei-Thoma theorem and Kerov conjecture}
In this section we recall the Kerov conjecture and briefly review the history of its special case, the Edrei-Thoma theorem.
A (one-sided) {\it P\'{o}lya frequency sequence} is a sequence $\{a_{n}\}_{n=1}^{\infty}$ of real numbers, such that the infinite upper unitriangular matrix \footnote{Also called a Toeplitz matrix.}
\begin{align*}
A = \begin{bmatrix}
    1 & a_{1} & a_{2} & a_{3} & a_{4} & \dots \\
    0 & 1 & a_{1} & a_{2} & a_{3} & \dots \\
    0 & 0 & 1 & a_{1} & a_{2} & \dots \\
    0 & 0 & 0 & 1 & a_{1} & \dots \\
    \vdots & \vdots & \vdots & \vdots & \ddots & \vdots 
\end{bmatrix} 
\end{align*}
is {\it totally positive}, i.e. all finite minors of $A$ are non-negative. Both terms were coined by I.J.~Schoenberg, who in late 1940s and early 1950s has worked on P\'{o}lya frequency sequences, functions, and kernels. His motivation came from questions in analysis, namely bounding the number of real roots of a polynomial in a finite interval, studying variation-diminishing transformations, and approximation by analytic functions. See \cite{Sch88} for Schoenberg's own account of his life and work, see \cite{Kar}, \cite{Pin} for in-depth review of the subject of totally positive matrices. In \cite[p.~367]{Sch48} Schoenberg made the following conjecture.
\begin{proposition}
\label{Edrei-Thoma}
$\{a_{n}\}_{n=1}^{\infty}$ is a P\'{o}lya frequency sequence if and only if
\begin{align}
\label{ETgenfun}
\displaystyle 1+\sum_{n=1}^{\infty} a_{n}z^{n} = e^{\gamma z} \frac{\prod_{j=1}^{\infty}(1+\beta_{j}z)}{\prod_{i=1}^{\infty}(1-\alpha_{i}z)} 
\end{align}
for some $\alpha_{i}, \beta_{j}, \gamma \geq 0$, such that $\sum_{i =1}^{\infty} \alpha_{i} + \sum_{j =1}^{\infty} \beta_{j} < \infty$.
\end{proposition}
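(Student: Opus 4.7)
The plan is to restate the problem in the language of symmetric functions, the natural setting for Kerov-type classifications. Let $\Lambda$ denote the algebra of symmetric functions over $\mathbb{R}$, and define a linear map $\varphi: \Lambda \to \mathbb{R}$ by $\varphi(h_n) = a_n$, with $h_0 = 1$. By the Jacobi--Trudi identity
\[
s_\lambda = \det\bigl(h_{\lambda_i - i + j}\bigr)_{i,j=1}^{\ell(\lambda)},
\]
every minor $\det(a_{m_i + j - i})$ of $A$ equals $\varphi(s_\lambda)$ for the partition obtained from the strictly increasing row indices $m_i$. Hence $\{a_n\}$ is a P\'olya frequency sequence if and only if $\varphi$ is Schur-positive, i.e.\ $\varphi(s_\lambda) \geq 0$ for every partition $\lambda$.

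For the ``if'' direction, each factor on the right of \eqref{ETgenfun} is the generating function of $\varphi(h_n)$ for a manifestly Schur-positive specialization: the principal specialization $h_n \mapsto h_n(\alpha_1, \alpha_2, \ldots)$ produces $\prod(1-\alpha_i z)^{-1}$; the $\omega$-dual specialization $h_n \mapsto e_n(\beta_1, \beta_2, \ldots)$ produces $\prod(1+\beta_j z)$; and the Plancherel specialization $p_1 \mapsto \gamma$, $p_k \mapsto 0$ for $k \geq 2$ produces $e^{\gamma z}$. Each of these is Schur-positive (via the combinatorial tableaux formulas for the first two, and via $s_\lambda \mapsto \gamma^{|\lambda|} \dim(\lambda)/|\lambda|!$ for the third). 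Since the Toeplitz matrix of a product of formal power series is the product of the individual Toeplitz matrices, the Cauchy--Binet formula yields total positivity of the product and completes this direction.

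The harder ``only if'' direction calls for a classification of all Schur-positive $\varphi$. After rescaling $z$ (equivalently, $a_n \mapsto a_n/a_1^n$) one may assume $\varphi(p_1) = a_1 = 1$, in which case $\varphi$ is a normalized character of the infinite symmetric group $S_\infty$, equivalently a central coherent probability measure on the Young graph $\Y$. I would then apply the Vershik--Kerov ergodic method: extremal Schur-positive $\varphi$ are identified by the almost-sure row and column frequencies
\[
\alpha_i = \lim_{n \to \infty} \lambda_i/n, \qquad \beta_j = \lim_{n \to \infty} \lambda'_j/n
\]
along a typical path in $\Y$, with residual mass $\gamma = 1 - \sum_i \alpha_i - \sum_j \beta_j \geq 0$; these parameters determine $\varphi$ through the factorization \eqref{ETgenfun}. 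General Schur-positive $\varphi$ are recovered as convex combinations via Choquet's theorem on the compact convex set of normalized Schur-positive homomorphisms.

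The principal obstacle is the asymptotic analysis underpinning the ergodic decomposition: establishing almost-sure existence of the limits $\alpha_i, \beta_j$, the summability $\sum_i \alpha_i + \sum_j \beta_j \leq 1$, and uniqueness of the parameter triple $(\alpha,\beta,\gamma)$. The required inputs are sharp bounds on the ratios $s_\lambda(\varphi)/\dim(\lambda)$ for large rectangular and near-rectangular $\lambda$, together with rigidity constraints coming from Schur positivity on hook and staircase shapes. Exactly this circle of estimates must be promoted to Macdonald polynomials in the rest of the paper, where it forms the technical core of the proof of the Kerov conjecture.
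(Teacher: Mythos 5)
Your sketch of the ``only if'' direction is the classical Vershik--Kerov asymptotic representation-theoretic proof of \cite{VK81}, later sharpened for the Jack case in \cite{KOO}, and it is \emph{not} the route this paper follows. The paper quotes Proposition~\ref{Edrei-Thoma} as a known result of \cite{Whit}, \cite{ASW}, \cite{Edr} (and independently \cite{Th}), and instead proves the more general Theorem~\ref{Kerov conjecture} by a two-stage argument that mirrors the original 1952 proof rather than the ergodic one. Part~I (Section~\ref{sec:proof1}) is a pole-removal argument --- Lemma~\ref{lemma:limit} and the key Lemma~\ref{lemma:poleremoval}, explicitly modelled on \cite{Whit} and \cite{ASW} --- which factors all poles and then (by the duality $\omega_{q,t}$) all zeros out of the generating series, reducing to specializations $\tau$ with $\lim_{r\to\infty}\tau(g_r)^{1/r}=\lim_{r\to\infty}\tau(e_r)^{1/r}=0$. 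Part~II (Section~\ref{sec:proof2}) is a combinatorial ``diffusivity'' argument, Lemmas~\ref{lemma:cancellation}--\ref{lemma:comparison}, which replaces Edrei's Nevanlinna-theoretic step and proves that such $\tau$ must be Plancherel; the synopsis notes this gives a new proof of Proposition~\ref{ETreduced} already in the Schur case $q=t$. Both routes are legitimate, but they buy different things: the ergodic method produces the parameters intrinsically as almost-sure row and column frequencies along a typical path in the Young graph, whereas the pole-removal/diffusivity method works entirely inside $\Lambda$ via Pieri rules and, crucially, \emph{does} extend to Macdonald polynomials. Your closing sentence --- that ``exactly this circle of estimates must be promoted to Macdonald polynomials in the rest of the paper'' --- therefore misidentifies the paper's technical core: Section~\ref{sec:known} explains that the shifted-polynomial machinery underlying the ergodic method appears to lose its connection with relative dimension beyond the Jack case, which is precisely why a new method was needed.

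Two smaller points on the sketch itself. The Choquet decomposition is superfluous for the statement as posed: once you normalize $a_1=1$ (the case $a_1=0$ forces all $a_n=0$ from the $2\times2$ minors, so is trivial), a Schur-positive \emph{homomorphism} is already an extreme point of the set of normalized Schur-positive harmonic functionals by the Vershik--Kerov ring theorem, so there is nothing to decompose. And nearly all of the difficulty in the ergodic route is buried in your phrase ``sharp bounds on the ratios'': proving that the frequencies $\alpha_i,\beta_j$ exist and are summable, and that the triple $(\alpha,\beta,\gamma)$ determines $\varphi$ through~\eqref{ETgenfun}, is the entire content of \cite{VK81}; it does not follow softly from positivity on hooks and staircases as your outline suggests.
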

Showing that $\{a_{n}\}_{n=1}^{\infty}$ defined by the generating function \eqref{ETgenfun} is indeed a P\'{o}lya frequency sequence is relatively straightforward. The hard part is to show that there are no other P\'{o}lya frequency sequences.
This conjecture was proved in 1952 in a series of three papers \cite{Whit}, \cite{ASW}, \cite{Edr}. The proof naturally splits into two parts that are using different methods. First, in \cite{Whit}, \cite{ASW} it was shown that the statement can be reduced to 
\begin{proposition}
\label{ETreduced}
If $\{a_{n}\}_{n=1}^{\infty}$ is a P\'{o}lya frequency sequence, such that $1+\sum_{n=1}^{\infty} a_{n}z^{n}$ gives an entire function with no zeroes, then $1+\sum_{n=1}^{\infty} a_{n}z^{n} = e^{\gamma z}$ for some $\gamma \geq 0$.
\end{proposition}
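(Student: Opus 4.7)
The plan is to combine Hadamard's factorization theorem with a coefficient growth bound extracted from total positivity. Since $f(z):=1+\sum_{n\ge 1} a_n z^n$ is entire and nowhere vanishing with $f(0)=1$, Hadamard's theorem allows us to write $f=e^g$ for some entire function $g$ with $g(0)=0$. It therefore suffices to prove that $g$ is linear; combined with the $1\times 1$ minor inequality $a_1\ge 0$ (extracted from $A$), this will yield $g(z)=\gamma z$ with $\gamma=a_1\ge 0$, which is the desired conclusion.

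The argument reduces to showing that $f$ has order at most $1$, equivalently $a_n\le C^n/n!$ for some $C>0$. Granting this, $|f(z)|\le e^{C|z|}$, so $f$ is a zero-free entire function of order $\leq 1$, hence of the form $e^{\alpha z+\beta}$ by Hadamard factorization; the normalization $g(0)=0$ then forces $\beta=0$ and we are done.

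The main obstacle is the factorial decay estimate $a_n\le C^n/n!$. The $2\times 2$ minors of $A$ give only log-concavity $a_n^2\ge a_{n-1}a_{n+1}$, which is far too weak: it would, for instance, allow super-exponentially decaying sequences like $a_n=r^{n^2}$, whose generating functions are certainly not exponential. The sharp bound must come from exploiting the $k\times k$ minors of $A$ for \emph{all} $k$ simultaneously. My approach would be to analyze the ratios $r_n:=a_{n+1}/a_n$: the higher minor inequalities impose a cascade of progressively stronger monotonicity and convexity constraints on $\{r_n\}$, and combined with the fact that $r_n\to 0$ (forced by $f$ being entire), these should pin down $r_n=O(1/n)$, which is precisely the decay rate $a_n\lesssim C^n/n!$ characteristic of the exponential. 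A complementary strategy worth attempting is to approximate $f$ by rational functions whose denominators and numerators carry the total positivity data of finite truncations, and to argue via a compactness argument on the parameter space that the only entire zero-free limits are exponentials. The delicate technical point in either route is making the estimate quantitative using only the finite-order minor information, without tacitly importing the full Edrei--Thoma classification that is the goal of the program.
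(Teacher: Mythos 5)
Your Hadamard reduction is correct as far as it goes: an entire nowhere-vanishing $f$ with $f(0)=1$ and of order at most $1$ must be $e^{\gamma z}$, and then $\gamma = a_1 \ge 0$. The genuine gap is that the needed growth estimate $a_n \le C^n/n!$ is left completely unproven, and it is not a side computation --- it is essentially equivalent to the proposition. (It is in fact true for \emph{every} P\'olya frequency sequence with entire generating function, with or without zeroes: by the Edrei--Thoma classification these are exactly $e^{\gamma z}\prod_j(1+\beta_j z)$ with $\gamma + \sum_j\beta_j < \infty$, whose $n$-th coefficient is at most $(\gamma+\sum_j\beta_j)^n/n!$. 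So a direct minor-based proof of this bound would already give the hardest part of the Edrei--Thoma theorem.) Your route --- that the $k\times k$ minor inequalities ``cascade'' into progressively stronger constraints on $r_n=a_{n+1}/a_n$ and pin down $r_n = O(1/n)$ --- is not supported by any stated lemma: the $3\times3$, $4\times4$,\ \dots\ minor inequalities do not visibly yield a rate, and tellingly your sketch of this step nowhere uses the zero-freeness hypothesis before the final appeal to Hadamard, which is a warning sign that something substantial is missing. The alternative sketch, rational truncations plus ``compactness on the parameter space,'' has the same issue: the natural parameter space is the infinite-dimensional simplex of $(\{\alpha_i\},\{\beta_j\},\gamma)$, and identifying exactly which compactness argument rules out everything but the exponential is precisely what must be proved.

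For comparison, Edrei's original proof \cite{Edr} applies Nevanlinna value-distribution theory to the meromorphic quotient $\left(1+\sum a_{2n}z^{2n}\right)/\left(1+\sum a_n z^n\right)^2$, and the present paper obtains Proposition~\ref{ETreduced} as the $q=t$ special case of Theorem~\ref{Kerov conjecture reduced}, whose proof is the Pieri-formula ``diffusivity'' argument occupying Section~\ref{sec:proof2} (Lemmas~\ref{lemma:cancellation} through~\ref{lemma:comparison}). Both are substantial analytic or combinatorial arguments specifically designed to extract the missing quantitative content; your proposal identifies a reasonable target reformulation but does not replace them.
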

This reduction was proved by  showing that if 
the generating function of a P\'{o}lya frequency sequence has the smallest pole at $1/\alpha$, then multiplying it by $(1-\alpha z)$ produces a generating function of another P\'{o}lya frequency sequence \footnote{It is not a priori clear that the generating function is meromorphic. Proving this  was part of the argument.}. Thus, by consequently applying such multiplications, one can remove all poles of \eqref{ETgenfun}, and then similarly remove all zeroes. The  second part of the proof, namely proof of the Proposition \ref{ETreduced}, was done in \cite{Edr} via an application of the Nevanlinna theory of meromorphic functions. This complex analytic machinery gives tools to describe the  distribution of solutions of the equation $f(z)=a$. A.~Edrei was able to prove Proposition \ref{ETreduced} by applying the Nevanlinna theory to function \begin{align*}f(z) = \frac{1+\sum_{n=1}^{\infty} a_{2n}z^{2n}}{\left(1+\sum_{n=1}^{\infty} a_{n}z^{n}\right)^{2}}.
\end{align*}
Independently (and priorly) partial results in the direction of Proposition \ref{Edrei-Thoma} were obtained by F.R.~Gantmacher and M.G.~Krein in connection with boundary value problems arising in vibration problems, see \cite{GK}.  

Independently, Proposition \ref{Edrei-Thoma}  was discovered by E.~Thoma \cite{Th} in the context of classifying normalized characters of the infinite symmetric group $S_{\infty}$, i.e. the group of finitary permutations of the countable set. His proof was very similar to that of \cite{Whit}, \cite{ASW}, \cite{Edr}. 

Another, completely new proof of Proposition \ref{Edrei-Thoma} was given in \cite{VK81} based on the paradigm of the asymptotic representation theory that was discovered by A.M.~Vershik and S.V.~Kerov. Since then the asymptotic representation theory has experienced many interesting developments, for a review see \cite{V03}, \cite{BO}. They have also discovered that it is instructive to restate  Proposition \ref{Edrei-Thoma} in the language of symmetric functions. See \cite{Mac} for an in-depth review of the theory of symmetric functions and \cite{Ful} for connections to representation theory and combinatorics.  

Denote by $\Lambda$ the algebra of symmetric power series of bounded degree (called {\it symmetric functions}) in countably many variables $x_{1}, x_{2}, x_{3}, \ldots$ over $\mathbb{R}$. Let
\begin{align}
h_{r} := \sum_{1 \leq i_{1} \leq i_{2} \leq \cdots \leq i_{r}} x_{i_{1}}x_{i_{2}} \cdots x_{i_{r}}, \qquad e_{r} := \sum_{1 \leq i_{1} < i_{2} < \cdots < i_{r}} x_{i_{1}}x_{i_{2}} \cdots x_{i_{r}}, \qquad  \quad p_{r} := \sum_{i \geq 1} x_{i}^{r}
\end{align}
be the $r$-th {\it complete symmetric function}, the $r$-th {\it elementary symmetric function}, and the $r$-th {\it power sum}, respectively. We also set $h_{0} =e_{0}=p_{0}:=1$ and $h_{r} =e_{r} = p_{r} :=0$ for $r<0$. Then one can show that
\begin{align}
\Lambda = \mathbb{R}[h_{1}, h_{2}, h_{3}, \ldots] = \mathbb{R}[e_{1}, e_{2}, e_{3}, \ldots] = \mathbb{R}[p_{1}, p_{2}, p_{3}, \ldots].
\end{align}
Moreover, $\Lambda$ is freely generated as an $\mathbb{R}$-algebra by either $\{h_{n}\}_{n=1}^{\infty}$, or $\{e_{n}\}_{n=1}^{\infty}$, or $\{p_{n}\}_{n=1}^{\infty}$.
In particular, a homomorphism $\theta: \Lambda \to \mathbb{R}$ is uniquely defined by specifying either $\{\theta(h_{n})\}_{n=1}^{\infty}$, or $\{\theta(e_{n})\}_{n=1}^{\infty}$, or $\{\theta(p_{n})\}_{n=1}^{\infty}$. 
Algebra $\Lambda$ as a vector space over $\mathbb{R}$ admits the basis of the {\it Schur functions} $\{S_{\lambda}\}_{\lambda \in \text{ {Partitions}}}$, which can be expressed in terms of the complete symmetric functions via the Jacobi-Trudi identity, \cite[eq.~(3.4) on p.~41]{Mac}: 
\begin{align}
S_{\lambda} = \det \left(h_{\lambda_{i}-i+j} \right)_{1 \leq i, j \leq n} \quad \text{ for any } n \geq \ell(\lambda). 
\end{align}
There is also a larger family of {\it skew Schur functions} $\{S_{\lambda/\mu}\}_{\lambda, \mu \in \text{ Partitions}}$, such that $S_{\lambda/\emptyset} = S_{\lambda}$ and $S_{\lambda/\mu} = 0$ unless $\mu \subset \lambda$. For them one has a more general version of the Jacobi-Trudi identity, \cite[eq.~(5.4) on p.~70]{Mac}:
\begin{align}
\label{skewJacobiTrudi}
S_{\lambda/\mu} = \det \left(h_{\lambda_{i}-\mu_{j}-i+j} \right)_{1 \leq i, j \leq n} \quad \text{ for any } n \geq \ell(\lambda).
\end{align}
Note that matrix $\left(h_{\lambda_{i}-\mu_{j}-i+j} \right)_{1 \leq i, j \leq n}$ after reversing the order of both rows and columns becomes the same as the minor of the infinite matrix $\left(h_{j-i}\right)_{i, j=1}^{\infty}$ specified by choosing columns $\{\lambda_{i}-i+M\}_{i=1}^{n}$ and rows $\{\mu_{j}-j+M\}_{j=1}^{n}$ for any $M$ such that $\mu_{n}-n+M > 0$. Given a sequence $\{a_{n}\}_{n=1}^{\infty}$, we can define a homomorphism $\theta\left[\{a_{n}\}\right]: \Lambda \to \mathbb{R}$ by setting $\theta(h_{n}) = a_{n}$ for all $n \geq 1$, $\theta(1) = 1$. Then \eqref{skewJacobiTrudi} implies
\begin{multline*}
\{a_{n}\}_{n=1}^{\infty} \text{ is a P\'{o}lya frequency sequence } \Longleftrightarrow
\theta\left[\{a_{n}\}\right](S_{\lambda/\mu}) \geq 0 \text{ for any partitions } \mu, \lambda. 
\end{multline*} 
But any skew Schur function can be expressed as a linear combination of Schur functions with non-negative integer (Littlewood-Richardson) coefficients, see \cite[p.~142]{Mac}, so
\begin{multline*}
\{a_{n}\}_{n=1}^{\infty} \text{ is a P\'{o}lya frequency sequence } \Longleftrightarrow
\theta\left[\{a_{n}\}\right](S_{\lambda}) \geq 0 \text{ for any partition } \lambda. 
\end{multline*} 
One can check (see section \ref{sec:Specializations} for details), that the homomorphism $\theta\left[\{a_{n}\}\right]$ with $\{a_{n}\}$ specified by the generating function \eqref{ETgenfun}, can be defined in terms of the power sums by $p_{1} \to \sum_{i=1}^{\infty} \alpha_{i} + \sum_{j=1}^{\infty} \beta_{j} + \gamma$, $p_{k} \to \sum_{i=1}^{\infty} \alpha_{i}^{k} + (-1)^{k-1}\sum_{j=1}^{\infty} \beta_{j}^{k}$ for all $k \geq 2$. Hence Proposition \ref{Edrei-Thoma} can be restated as 
\begin{proposition}[Edrei-Thoma theorem]
\label{Edrei-Thoma2}
A homomorphism $\theta: \Lambda \to \mathbb{R}$ takes non-negative values on all Schur functions if and only if it is defined by
\begin{align}
\theta(p_{1}) = \sum_{i=1}^{\infty} \alpha_{i} + \sum_{j=1}^{\infty} \beta_{j} + \gamma, \quad \theta(p_{k}) = \sum_{i=1}^{\infty} \alpha_{i}^{k} + (-1)^{k-1}\sum_{j=1}^{\infty} \beta_{j}^{k} \text{ for all } k \geq 2,
\end{align}
for some $\alpha_{i}, \beta_{j}, \gamma \geq 0$, such that $\sum_{i =1}^{\infty} \alpha_{i} + \sum_{j =1}^{\infty} \beta_{j} < \infty$.
\end{proposition}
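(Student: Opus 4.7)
The excerpt has already reduced the statement to its Pólya frequency sequence form via Jacobi--Trudi and Littlewood--Richardson positivity, so the ``if'' direction amounts to checking that each of the three building-block specializations is Schur-positive and that the class of Schur-positive homomorphisms is closed under the convolution $(\theta_1*\theta_2)(f):=(\theta_1\otimes\theta_2)\Delta f$. Closure is immediate from the coproduct formula $\Delta S_\lambda=\sum_\mu S_\mu\otimes S_{\lambda/\mu}$ together with non-negativity of $S_{\lambda/\mu}$ under any Schur-positive specialization. For the atoms: the $\alpha$-specialization $p_k\mapsto\sum\alpha_i^k$ sends $S_\lambda$ to $S_\lambda(\alpha_1,\alpha_2,\ldots)\geq 0$ by monomial-positivity of Schur polynomials; the $\beta$-specialization $p_k\mapsto(-1)^{k-1}\sum\beta_j^k$ reduces to the previous case via the involution $\omega$, under which $S_\lambda\mapsto S_{\lambda'}$; and the Plancherel specialization $p_1\mapsto\gamma$, $p_{k\geq 2}\mapsto 0$ sends $S_\lambda$ to $\gamma^{|\lambda|}\cdot\dim\lambda/|\lambda|!$, where $\dim\lambda$ is the number of standard Young tableaux of shape $\lambda$.

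\textbf{Converse via asymptotic representation theory.} For the ``only if'' direction the plan is to apply the ergodic method of Vershik--Kerov, which is also the route that generalizes to the Macdonald setting of the paper. Normalize $\theta(p_1)=1$ and set $\varphi(\lambda):=\theta(S_\lambda)$. Then $\varphi$ is a non-negative harmonic function on the Young graph $\Y$: the Pieri identity $h_1\cdot S_\lambda=\sum_{\Lambda:\lambda\nearrow\Lambda}S_\Lambda$, applied under $\theta$, yields $\varphi(\lambda)=\sum_{\Lambda:\lambda\nearrow\Lambda}\varphi(\Lambda)$. The set of such $\varphi$ is a Choquet simplex, so it suffices to classify its extreme points. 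To an extreme $\varphi$ associate the Markov chain on $\Y$ with transition probabilities $\mathrm{Pr}(\lambda\nearrow\Lambda)=\varphi(\Lambda)/\varphi(\lambda)$, producing a random infinite path $\emptyset=\lambda^{(0)}\nearrow\lambda^{(1)}\nearrow\cdots$. Triviality of the tail $\sigma$-algebra (equivalent to extremality of $\varphi$) forces almost-sure convergence $\lambda^{(n)}_i/n\to\alpha_i$ and $(\lambda^{(n)})'_j/n\to\beta_j$ with $\sum\alpha_i+\sum\beta_j\leq 1$, and setting $\gamma:=1-\sum\alpha_i-\sum\beta_j$ the asymptotic Frobenius-type character formula recovers $\theta(p_k)$ in exactly the Thoma form.

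\textbf{Main obstacle.} The difficult step is the existence of the limits $\lambda^{(n)}_i/n\to\alpha_i$, $(\lambda^{(n)})'_j/n\to\beta_j$ and the reconstruction of $\theta$ from them. This requires sharp two-sided control of ratios $\varphi(\Lambda)/\varphi(\lambda)$ that is uniform in $\lambda$, which in the Schur case can be obtained from semistandard tableau combinatorics and the classical branching rule for $S_\infty$-characters, together with a $|\lambda|\to\infty$ stability argument. In the Macdonald generalization pursued in this paper, the Pieri coefficients acquire $q,t$-deformations and the ratios $P_\Lambda/P_\lambda$ no longer admit direct monotone estimates; overcoming this and carrying out the ergodic argument in the deformed setting is presumably where the bulk of the work in the remainder of the paper will go.
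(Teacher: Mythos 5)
Your ``if'' direction matches the paper's Section~\ref{sec:Specializations}: the three atomic specializations and closure under union are exactly the ingredients there, specialized to $q=t$. For the converse, however, you invoke the Vershik--Kerov ergodic method of \cite{VK81} (interpret $\theta(S_\lambda)$ as a harmonic function on the Young graph, restrict to extreme points, pass to the associated central Markov chain on paths, and use tail triviality plus a law of large numbers to extract rescaled Frobenius coordinates $\alpha_i,\beta_j$). This is a legitimate historical route to Edrei--Thoma, but it is genuinely different from the paper's, and your guess that it ``is also the route that generalizes to the Macdonald setting of the paper'' is wrong. The paper obtains Proposition~\ref{Edrei-Thoma2} as the $q=t$ case of Theorem~\ref{Kerov conjecture}, following instead the tradition of \cite{Whit}, \cite{ASW}, \cite{Edr}, \cite{Th}: first a pole-removal step (Lemma~\ref{lemma:poleremoval}, Section~\ref{sec:proof1}) which factors $(t\alpha z;q)_\infty/(\alpha z;q)_\infty$ and, dually, $(1+\beta z)$ out of the generating series $\Pi(\theta)$ one parameter at a time while preserving positivity, until one is left with a specialization whose generating function is entire and zero-free; then a new combinatorial ``diffusivity'' argument (Lemmas~\ref{lemma:cancellation}--\ref{lemma:comparison}, Section~\ref{sec:proof2}) which replaces Edrei's Nevanlinna-theory step and shows, by comparing Pieri expansions of $g_kQ_\mu$ and $g_1^kQ_\mu/k!$, that the remaining specialization must be Plancherel. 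The paper deliberately avoids the ergodic route: as it explains in Section~\ref{sec:known}, shifted Macdonald polynomials do not retain the link to relative dimension that \cite{KOO} exploited in the Jack case, and even the equivalence between extremality and multiplicativity needs a bespoke argument (Proposition~\ref{prop:boundary}) because positivity of the $(q,t)$-Littlewood--Richardson coefficients is open. The tradeoff is that the pole-removal/diffusivity approach is more elementary and scales to all $(q,t)$, at the cost of losing the direct probabilistic interpretation that the ergodic method provides.

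Within the sketch itself there is also a real gap: the sentence ``Triviality of the tail $\sigma$-algebra \ldots\ forces almost-sure convergence $\lambda^{(n)}_i/n\to\alpha_i$'' elides the hardest part of \cite{VK81}. Tail triviality by itself only guarantees that tail-measurable random variables are a.s.\ constant; establishing that the rescaled row and column lengths actually converge (and that the limits recover $\theta$) requires the uniform two-sided estimates on the ratios $\varphi(\Lambda)/\varphi(\lambda)$ that you allude to but do not supply. That concentration argument is precisely the analytic core of the Vershik--Kerov proof and cannot be taken for granted.
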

See \cite{BO}, \cite{Me} for a more detailed exposition of the Edrei-Thoma theorem in the context of representations of the infinite symmetric group.

For fixed parameters $q, t$ algebra $\Lambda$ also admits the bases of the {\it symmetric Macdonald functions} $\left \{P_{\lambda}(x_{1}, x_{2}, x_{3}, \ldots; q, t)  \right \}_{\lambda \in \text{Partitions}}$ and $\left \{Q_{\lambda}(x_{1}, x_{2}, x_{3}, \ldots; q, t)  \right \}_{\lambda \in \text{Partitions}}$, that were introduced by I.G.~Macdonald. We have $Q_{\lambda} = b_{\lambda}P_{\lambda}$, where the constant $b_{\lambda}$ is some rational function of $q$ and $t$. For $-1 < q, t < 1$ we have $b_{\lambda}>0$. For $q=t$ both functions $P_{\lambda}$ and $Q_{\lambda}$ become the Schur function $S_{\lambda}$.

Over recent decades the Macdonald polynomials have been an exciting and broad research subject due, in particular, to their deep connections with affine Hecke algebras and Hilbert schemes. S.V.~Kerov has conjectured in \cite[Sec.~7.3]{Ker92}, see also \cite[p.~106]{Ker03}\footnote{There the conjecture is stated in terms of the generating function $\sum_{n=0}^{\infty}\theta(h_{n})z^{n}$, but it is straightforward to check that both formulations are equivalent.}, that it is possible to generalize Proposition \ref{Edrei-Thoma2} to the following theorem.
\begin{theorem}
\label{Kerov conjecture}
For fixed $q, t \in \mathbb{R}$, $|q|, |t| < 1$, a homomorphism $\theta: \Lambda \to \mathbb{R}$ takes non-negative values on all Macdonald functions $Q_{\lambda} (\dots; q, t)$ if and only if 
\begin{align}
\label{Kerov-condition}
\theta(p_{1}) = \sum_{i=1}^{\infty} \alpha_{i} + \frac{1-q}{1-t}\left(\sum_{j=1}^{\infty} \beta_{j} + \gamma\right), \quad \theta(p_{k}) = \sum_{i=1}^{\infty} \alpha_{i}^{k} + (-1)^{k-1}\frac{1-q^{k}}{1-t^{k}} \sum_{j=1}^{\infty} \beta_{j}^{k} \text{ for all } k \geq 2,
\end{align}
for some $\alpha_{i}, \beta_{j}, \gamma \geq 0$, such that $\sum_{i =1}^{\infty} \alpha_{i} + \sum_{j =1}^{\infty} \beta_{j} < \infty$.
\end{theorem}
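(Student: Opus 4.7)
The plan is to handle the two implications separately, with the ``only if'' direction carrying essentially all the content. For the ``if'' direction, I verify Macdonald-nonnegativity of the homomorphism in (\ref{Kerov-condition}) by reducing to three elementary building blocks. The pure-$\alpha$ specialization $\theta_\alpha(p_k)=\alpha^k$ is evaluation at a single variable $x_1=\alpha$, trivially Macdonald-nonneg. The pure-$\beta$ specialization is transported to a pure-$\alpha$ specialization with swapped parameters $(q,t)\leftrightarrow(t,q)$ by the Macdonald involution $\omega_{q,t}:P_\lambda(q,t)\mapsto Q_{\lambda'}(t,q)$. The Macdonald--Plancherel specialization arises as the $N\to\infty$ limit of pure-$\alpha$ specializations with $N$ equal parameters of size $\gamma(1-t)/(N(1-q))$. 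These combine via the coproduct that glues specializations through $P_\lambda(x,y)=\sum_\mu P_{\lambda/\mu}(x)P_\mu(y)$, whose nonnegativity is preserved because $P_{\lambda/\mu}$ expands nonnegatively in $\{P_\nu\}$ when $|q|,|t|<1$.

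\textbf{Necessity.} Fix a Macdonald-nonneg homomorphism $\theta$ with $\theta(1)=1$. The strategy mirrors the Vershik--Kerov ergodic proof of Edrei--Thoma, but with the Young graph replaced by its $(q,t)$-weighted analogue. First I extend nonnegativity to skew functions: since $Q_{\lambda/\mu}=\sum_\nu f^\lambda_{\mu\nu}(q,t)\,Q_\nu$ with $f^\lambda_{\mu\nu}(q,t)\ge 0$ for $|q|,|t|<1$, one obtains $\theta(Q_{\lambda/\mu})\ge 0$. Next I associate to $\theta$ a coherent family of probability measures $\{M_n^\theta\}$ on partitions of $n$, of the form $M_n^\theta(\lambda)\propto\theta(P_\lambda)\,Q_\lambda(1^n)$ with normalization supplied by the Cauchy identity, whose coherence under the $(q,t)$-Pieri down-transitions is forced by the Macdonald branching rule. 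The cone of such central measures is a Choquet simplex, so every $\theta$ decomposes as an integral over its extremal rays, and it suffices to classify the extremal ones. For extremal $\theta$, a de Finetti/martingale argument yields a law of large numbers for the random partition $\lambda^{(n)}\sim M_n^\theta$: the rescaled row and column lengths converge almost surely to deterministic sequences $\{\alpha_i\}$ and $\{\beta_j\}$, with any residual mass captured by a single Plancherel-type parameter $\gamma$. Computing $\theta(p_k)$ as an explicit moment of the limit shape then produces (\ref{Kerov-condition}).

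\textbf{Main obstacle.} The crux is the law of large numbers and moment extraction for the extremal central measures. In the Schur case $q=t$ this follows from RSK and the combinatorics of standard Young tableaux. For general Macdonald, the branching weights are rational functions of $q,t$ with no direct bijective model, so one must work with explicit Pieri coefficients and perform asymptotic analysis---either via contour-integral representations of the transition kernels, or via the interlacing-particle interpretation of Macdonald processes. Controlling the simultaneous contributions of rows (contributing $\alpha_i$), columns (contributing $\beta_j$), and a diffuse Plancherel bulk (contributing $\gamma$), and proving that these three components decouple cleanly in the $n\to\infty$ limit, is where the bulk of the technical work will lie.
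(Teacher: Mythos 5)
Your sufficiency sketch is close to what the paper does (Section~\ref{sec:Specializations}), though you invoke one false fact there: the claim that ``$P_{\lambda/\mu}$ expands nonnegatively in $\{P_\nu\}$ when $|q|,|t|<1$'' is wrong in this generality. The paper explicitly notes that the $(q,t)$-Littlewood--Richardson coefficients $c^\nu_{\lambda,\mu}(q,t)$ fail to be nonnegative for some $(q,t)\in(-1,1)^2$, with $c^{(3,2,1)}_{(2,1),(2,1)}(q,t)$ as a witness. The correct route for sufficiency is to verify $\theta(Q_{\lambda/\mu})\ge 0$ directly for the three building-block specializations $\tau_\alpha$, $w_{q,t}(\tau_\beta)$, $\tau_{Pl,\gamma}$ from their tableau formulas, and then glue them via \eqref{SpecUnion}; that argument never needs positivity of the structure constants.

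Your necessity argument, however, has a genuine gap at its foundation, and it departs entirely from what the paper does. First, the very first step---``since $Q_{\lambda/\mu}=\sum_\nu f^\lambda_{\mu\nu}(q,t)Q_\nu$ with $f^\lambda_{\mu\nu}(q,t)\ge 0$''---is the same false positivity assertion; these $f^\lambda_{\mu\nu}$ are the $(q,t)$-LR coefficients (up to $b_\lambda$ normalizations), and their nonnegativity is an open problem on a subregion of $(-1,1)^2$ and \emph{false} on the rest of it. The entire Vershik--Kerov/Choquet framework you are importing is not available for this reason: the coherent family $\{M_n^\theta\}$ you need (which, incidentally, should be built from the $\psi$-weighted path count in $\mathcal{Y}_{q,t}$, not from $Q_\lambda(1^n)$, which would give a Macdonald measure rather than a coherent system) requires nonnegative down-transition kernels, and those come from exactly the same positivity. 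The paper proves the needed Choquet/ring-theorem variant (Proposition~\ref{prop:boundary}) by a workaround that avoids this positivity, but that proposition is a consequence of Theorem~\ref{Kerov conjecture}, not a step toward it. Second, even granting the framework, your ``main obstacle''---the law of large numbers and moment extraction for extremal measures---is not merely where the ``bulk of the technical work'' lies; in the Jack case (\cite{KOO}) this step rests on the shifted Jack polynomial machinery of \cite{OO2}, and the paper points out that for general Macdonald parameters the shifted polynomials of \cite{Ok98} lose the connection to relative dimensions, so there is no known way to execute this step. The paper instead proceeds by a completely different two-stage argument: (i) a pole-removal lemma (Lemma~\ref{lemma:poleremoval}) that peels poles and, via duality, zeros off the generating series $\Pi(\theta)=\sum\theta(g_r)z^r$, reducing to the case where both $\tau(g_r)^{1/r}\to 0$ and $\tau(e_r)^{1/r}\to 0$; and (ii) a combinatorial ``diffusivity'' argument (Lemmas~\ref{lemma:cancellation}--\ref{lemma:comparison}) showing directly from the Pieri formula that such a $\tau$ must kill $p_k$ for $k\ge 2$. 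Neither Choquet decomposition nor a LLN for central measures appears anywhere in that proof.
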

The main result of this paper is a proof of Theorem \ref{Kerov conjecture}. As in the case of Proposition \ref{Edrei-Thoma2}, verification of the fact that $\theta$ defined by \eqref{Kerov-condition} satisfies $\theta(P_{\lambda}) \geq 0$ for any partition $\lambda$, is relatively straightforward and was known before, see section \ref{sec:Specializations} for details. The hard part is to show that there are no other $\theta$. 

\begin{remark}
Recent interest in homomorphisms with non-negative values on all the Macdonald functions comes, in particular, from the study of Macdonald measures and processes, see \cite{BC}. If $\theta_{1}$, $\theta_{2}$ are two such homomorphisms, then the corresponding Macdonald measure\footnote{This definition works as long as $\displaystyle \sum_{\lambda \in \text{Partitions}}\theta_{1}\left(P_{\lambda}\right)\theta_{2}\left(Q_{\lambda}\right) < \infty$} is a probability measure on partitions that assigns to a partition $\lambda$ probability 
$\sim \theta_{1}\left(P_{\lambda}\right)\theta_{2}\left(Q_{\lambda}\right)$. For $t =0$ and $q \to 1$ such measures (with $\theta_{1}$, $\theta_{2}$ given by particular specifications of \eqref{Kerov-condition}) arise in the study of random polymers, while for $q=0$ and $0 \leq t < 1$ they appear in the study of the stochastic six vertex models, see \cite{BBW}.
\end{remark}

\subsection{Characters of infinite groups and minimal boundaries of branching graphs}
\label{sec:branching}
In this section we briefly review the representation-theoretic significance of
\begin{itemize}
\item
Proposition \ref{Edrei-Thoma2} in the context of the infinite symmetric group $S_{\infty}$.

\item
The Hall-Littlewood case of the Theorem \ref{Kerov conjecture} in the context of infinite matrix groups over finite fields.
\end{itemize}
We also explain how Theorem \ref{Kerov conjecture} can be interpreted as explicit description of the minimal boundary of the Young graph with Macdonald multiplicities.
Consider  the {\it Young graph} $\mathcal{Y}$. Vertices of $\mathcal{Y}$ are partitions graphically represented by Young diagrams (see section \ref{sec:Macdonald Polynomials} for details), and we draw a directed edge $\mu \nearrow \lambda$, if $\lambda$ is obtained from $\mu$ by adding one box, see Fig. \ref{Young}. 
\begin{figure}[h]
\includegraphics[width = 
0.5\textwidth]{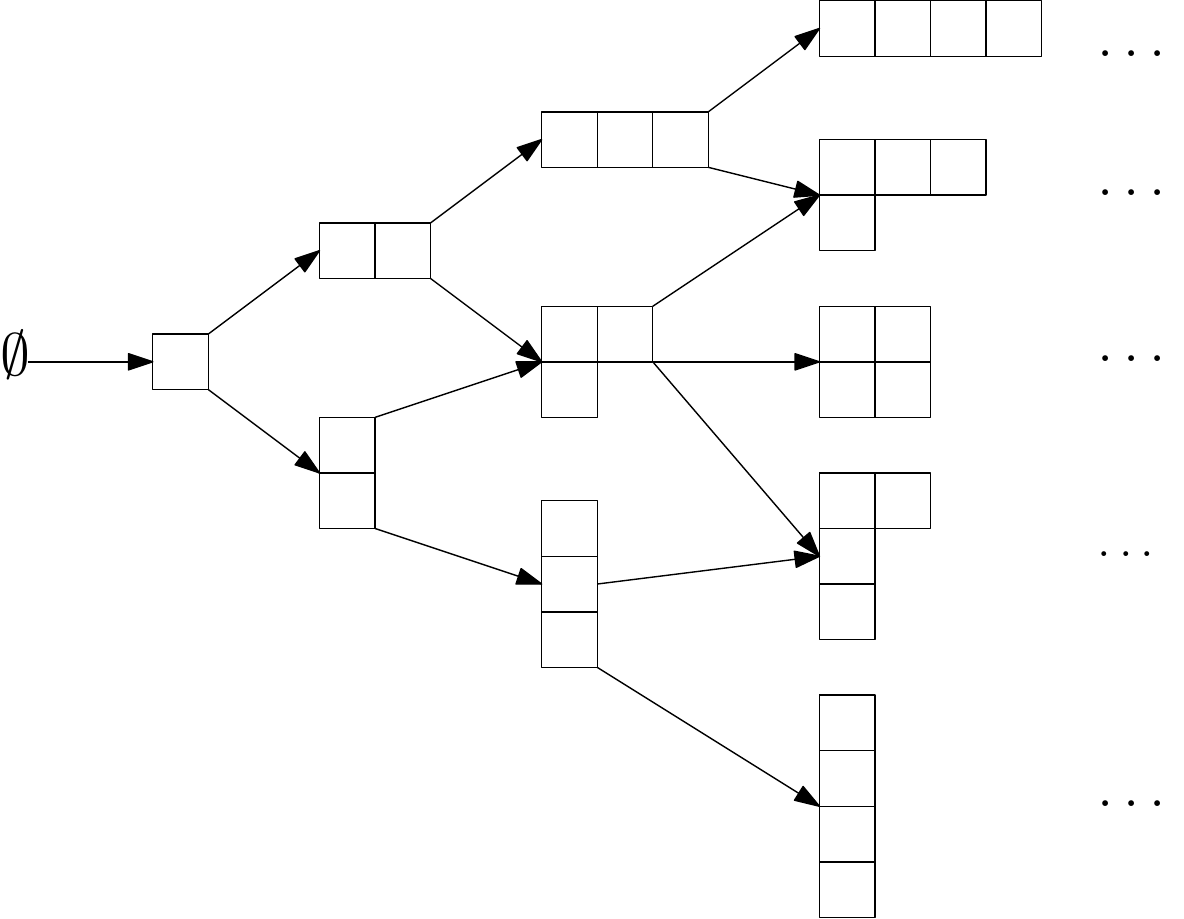}
\caption{Young graph $\mathcal{Y}$.}
\label{Young}
\end{figure}
Then the normalized \footnote{The normalization is $\chi(Id) = 1$.} characters of $S_{\infty}$ can be shown to be in bijection with the normalized non-negative harmonic (in the Vershik-Kerov sense) functions on $\mathcal{Y}$, namely such functions $f: \{\text{Partitions}\} \to \mathbb{R}_{\geq 0}$, that
\begin{enumerate}
\item
$f$ is harmonic in the sense that $f(\mu) = \sum_{\lambda: \mu \nearrow \lambda} f(\lambda)$ for any partition $\mu$.

\item
$f(\emptyset)=1$.
\end{enumerate} 
See \cite[Ch. 3]{BO} for details.
Due to the Pieri formula, \cite[eq.~(5.16) on p.~73]{Mac}: 
\begin{align}
h_{1}S_{\mu} = \sum_{\lambda: \mu \nearrow \lambda}S_{\lambda},
\end{align}
such $f$ gives rise to a linear functional $\widetilde{f}:\Lambda \to \mathbb{R}$ defined by $\widetilde{f}(S_{\lambda}) := f(\lambda)$, such that
\begin{enumerate}
\item
$\widetilde{f}(1)=1$.

\item
$\widetilde{f}\left(S_{\lambda} \right) \geq 0$ for any partition $\lambda$.

\item
$\widetilde{f}\left(h_{1}S_{\lambda}\right) = \widetilde{f}\left(S_{\lambda}\right)$ for any partition $\lambda$.
\end{enumerate}

Denote the set of such functionals by $\mathcal{H}$. It is convex, and the minimal boundary $\mathcal{M}$ of the Young graph $\mathcal{Y}$ is (by definition) the set of extreme points of $\mathcal{H}$. Any functional in $\mathcal{H}$ is an "average" of extreme functionals, i.e. it admits an integral (over $\mathcal{M}$) representation similar to the classical Poisson integral
formula for non-negative harmonic functions on a disk.
The Vershik-Kerov ring theorem, \cite{VK84}, implies that $\widetilde{f} \in \mathcal{H}$ is extreme if and only if $\widetilde{f}$ is a homomorphism. Thus, Proposition \ref{Edrei-Thoma2} allows one to describe $\mathcal{M}$, i.e. the set of extreme characters of $S_{\infty}$, as an infinite-dimensional simplex parametrized by two sequences $\{\alpha_{i}\}_{i=1}^{\infty}$, $\{\beta_{j}\}_{j=1}^{\infty}$, such that $\alpha_{i}, \beta_{j} \geq 0$ and $\displaystyle \sum_{i=1}^{\infty} \alpha_{i} + \sum_{j=1}^{\infty} \beta_{j} \leq 1$.

Using the Macdonald version of the Pieri formula (see section \ref{sec:Macdonald Polynomials} for details)
\begin{align}
\left( \frac{1-t}{1-q}h_{1} \right) Q_{\mu} = \sum_{\lambda: \mu \nearrow \lambda}\psi_{\lambda/\mu}(q, t)Q_{\lambda},
\end{align}
one can for $-1 < q, t < 1$ equip edges $\mu \nearrow \lambda$ of the Young graph $\mathcal{Y}$ with positive multiplicities $\psi_{\lambda/\mu}(q, t)$ and consider the modified question of finding the minimal boundary of such graph $\mathcal{Y}_{q, t}$. More precisely, consider the set $\mathcal{H}_{q, t}$ of non-negative harmonic functions on $\mathcal{Y}_{q, t}$, i.e. such functions $f: \{\text{Partitions}\} \to \mathbb{R}_{\geq 0}$, that $f(\emptyset) = 1$ and 
\begin{align}
f(\mu) = \sum_{\lambda: \mu \nearrow \lambda}\psi_{\lambda/ \mu}(q, t)f(\lambda) \qquad \text{for any partition } \mu.
\end{align}
$\mathcal{H}_{q, t}$ is a convex set. Denote by $\mathcal{M}_{q, t}$ the set of its extreme points (i.e. the minimal boundary of $\mathcal{Y}_{q, t}$).  To each $f \in \mathcal{H}_{q, t}$ corresponds the linear functional $\widetilde{f}: \Lambda \to \mathbb{R}$ defined by $\widetilde{f}(Q_{\mu}) :=f(\mu)$. 
The following proposition shows that Theorem \ref{Kerov conjecture} in fact provides an explicit description of the minimal boundary $\mathcal{M}_{q, t}$.
\begin{proposition}
\label{prop:boundary}
Suppose $-1 < q, t < 1$. Then 
\begin{multline*}
\left\{\widetilde{f} \mid f \in \mathcal{M}_{q, t} \right\} \\ = \left \{ \text{Homomorphisms } \theta:\Lambda \to \mathbb{R} \text{ with } \theta\left(\frac{1-t}{1-q}h_{1}\right) = 1 \text{ and }  \theta \left(Q_{\mu}\right) \geq 0 \text{ for any partition } \mu  \right \}.
\end{multline*}
\end{proposition}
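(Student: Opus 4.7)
\emph{Plan.} The strategy is to translate the question about the branching graph $\mathcal{Y}_{q,t}$ into one about linear functionals on $\Lambda$, and then to invoke the Vershik-Kerov ring theorem to identify extreme points with algebra homomorphisms.

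Since $\{Q_\mu\}_{\mu}$ is a basis of $\Lambda$, the map $f\mapsto\widetilde{f}$ defined by $\widetilde{f}(Q_\mu):=f(\mu)$ is an affine bijection between real-valued functions on partitions and linear functionals on $\Lambda$, which preserves convex structure. The first step is to verify that $f\in\mathcal{H}_{q,t}$ if and only if $\widetilde{f}$ satisfies: (a) $\widetilde{f}(Q_\mu)\geq 0$ for every partition $\mu$, (b) $\widetilde{f}(1)=1$, and (c) $\widetilde{f}\bigl(\tfrac{1-t}{1-q}h_1\cdot g\bigr)=\widetilde{f}(g)$ for every $g\in\Lambda$. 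Conditions (a) and (b) are immediate from $f\geq 0$ and $f(\emptyset)=1$. Condition (c) is obtained by applying $\widetilde{f}$ to the Macdonald-Pieri identity $\tfrac{1-t}{1-q}h_1\cdot Q_\mu=\sum_{\lambda:\mu\nearrow\lambda}\psi_{\lambda/\mu}(q,t)Q_\lambda$, and then extending by linearity; specializing to $g=1$ recovers the normalization $\widetilde{f}\bigl(\tfrac{1-t}{1-q}h_1\bigr)=1$ appearing in the statement.

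The second step is to invoke the Vershik-Kerov ring theorem in this setting: the extreme points of the convex set of functionals satisfying (a)--(c) are exactly the algebra homomorphisms $\theta:\Lambda\to\mathbb{R}$ with $\theta(Q_\mu)\geq 0$ for every $\mu$ and $\theta\bigl(\tfrac{1-t}{1-q}h_1\bigr)=1$. Multiplicativity makes (b) and (c) automatic: $\theta(1)=\theta(1)^{2}$ together with $\theta\bigl(\tfrac{1-t}{1-q}h_1\bigr)=1\neq 0$ forces $\theta(1)=1$, while $\theta\bigl(\tfrac{1-t}{1-q}h_1\cdot g\bigr)=\theta\bigl(\tfrac{1-t}{1-q}h_1\bigr)\theta(g)=\theta(g)$. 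Composing this identification with the affine bijection of the first step yields the desired description of $\{\widetilde{f}\mid f\in\mathcal{M}_{q,t}\}$.

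The main obstacle is checking that the classical Vershik-Kerov ring theorem, originally formulated for the Young graph with unit multiplicities, extends to the weighted graph $\mathcal{Y}_{q,t}$. Its proof requires the positivity of the edge multiplicities $\psi_{\lambda/\mu}(q,t)>0$, guaranteed by $-1<q,t<1$, and the non-negativity of the structure constants in $Q_\mu Q_\nu=\sum_\lambda c^\lambda_{\mu\nu}(q,t)Q_\lambda$ in the same range of parameters; both are standard features of Macdonald theory recalled in Section~\ref{sec:Macdonald Polynomials}, so the classical argument carries over without essential modification.
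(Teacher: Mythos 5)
Your overall framework (translate harmonic functions to linear functionals, identify extreme points via a ring-theorem type argument, then use Choquet) is the right skeleton, and the easy direction — multiplicativity forces (b) and (c) — is fine. But there is a genuine and decisive gap in the hard direction.

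You invoke the Vershik--Kerov ring theorem by asserting that the structure constants $c^{\nu}_{\lambda,\mu}(q,t)$ in $Q_{\mu}Q_{\lambda}=\sum_{\nu}c^{\nu}_{\lambda,\mu}(q,t)Q_{\nu}$ are non-negative for all $-1<q,t<1$, and you call this ``a standard feature of Macdonald theory recalled in Section~\ref{sec:Macdonald Polynomials}.'' This is not true, and the paper makes a point of saying so: non-negativity of the $q,t$-deformed Littlewood--Richardson coefficients is an open problem even in the region $0\leq q,t<1$, is only known for $q=0$, $0\leq t<1$ (Hall--Littlewood), and actually \emph{fails} for some $(q,t)\in(-1,1)^{2}$ — the paper gives the counterexample $c^{(3,2,1)}_{(2,1),\,(2,1)}(q,t)$. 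So the hypothesis of the classical ring theorem is unavailable, and your claim that ``the classical argument carries over without essential modification'' does not hold.

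The paper circumvents this as follows. To prove that an extreme $\widetilde{f}$ is multiplicative, it suffices to show $\widetilde{f}(Q_{\mu}g_{k})=\widetilde{f}(Q_{\mu})\widetilde{f}(g_{k})$. Instead of decomposing against a general $Q_{\nu}$-product, one observes that for a sufficiently small constant $c>0$ (chosen using the uniform bounds $c_{2}<\psi_{\lambda/\mu}<c_{1}$ on one-box and horizontal-$k$-strip Pieri coefficients from Lemma~\ref{lemma:psibound}), the element $g_{1}^{k}-cg_{k}$ multiplied by any $Q_{\mu}$ still has a non-negative Pieri expansion. This yields a convex decomposition $\widetilde{f}=c\widetilde{f}(g_{k})\,h_{1}+\widetilde{f}(g_{1}^{k}-cg_{k})\,h_{2}$ into two genuine harmonic functions, and extremality forces $\widetilde{f}=h_{1}$. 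The input is only the strict positivity of the Pieri coefficients $\psi_{\lambda/\mu}$ together with two-sided bounds on them — not positivity of general structure constants — which is exactly why the argument covers all of $(-1,1)^{2}$. That replacement of ``positive structure constants'' by ``positivity of $g_{1}^{k}-cg_{k}$ acting via Pieri'' is the missing idea in your proposal.
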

Proposition \ref{prop:boundary} is proved in section \ref{sec:boundary}. The proof is very similar to that of the Vershik-Kerov ring theorem, see  \cite[sec.~8.7]{GO} or \cite[pp. 50-51]{BO} for expositions of the later.  However, we can't apply the Vershik-Kerov theorem directly. Its main condition is the non-negativity of the structure constants of multiplication for the basis of symmetric functions that we are considering.  For Schur functions that condition becomes the  non-negativity of the Littlewood-Richardson coefficients. For Macdonald functions consider the the $q, t$--deformed Littlewood-Richardson coefficients $c^{\nu}_{\lambda, \mu}(q, t)$ defined by 
\begin{align*}
P_{\mu}(q, t)P_{\lambda}(q, t) = \sum_{\nu} c^{\nu}_{\lambda, \mu}(q, t)P_{\nu}(q, t).
\end{align*} 
For $q=0$ and $0 \leq t < 1$ the non-negativity of $c^{\nu}_{\lambda, \mu}$ follows from known formulas for these coefficients, see \cite[Th.~4.9]{Ra}, \cite[Th.~1.3]{Sc}.  Our numerical experiments in \emph{Mathematica } also suggest that $c^{\nu}_{\lambda, \mu}(q, t)$ might indeed be non-negative for both $0 \leq q, t < 1$ and $-1 < q, t \leq 0$. As far as we know, proving non-negativity in such generality is an open question. At the same time, non-negativity doesn't hold for all $(q, t) \in (-1, 1)^{2}$, as can be shown by considering the coefficient $c^{(3, 2, 1)}_{(2, 1),\ (2, 1)}(q, t)$. Our proof of Proposition \ref{prop:boundary} bypasses these complications. Combining Theorem \ref{Kerov conjecture} with Proposition \ref{prop:boundary} we obtain description of the minimal boundary of $\mathcal{Y}_{q, t}$ as an infinite-dimensional simplex parametrized by two sequences $\{\alpha_{i}\}_{i=1}^{\infty}$, $\{\beta_{j}\}_{j=1}^{\infty}$, such that $\alpha_{i}, \beta_{j} \geq 0$ and $\displaystyle \sum_{i=1}^{\infty} \alpha_{i} + \frac{1-q}{1-t} \sum_{j=1}^{\infty} \beta_{j} \leq 1$.

For $q=0$ Macdonald functions become the Hall-Littlewood functions, and the corresponding question of identifying the minimal boundary gets a representation-theoretic meaning in the context of matrix groups over finite fields. Namely, consider the finite field $\mathbb{F}_{p}$, where $p$ is a prime power. Let 
\begin{multline*}
GL(\infty, \mathbb{F}_{p}) := \{ \left[X_{i, j} \right]_{i, j=1}^{\infty} \mid X_{i, j} \in \mathbb{F}_{p}, \text{ and } \exists n, \text{ such that } X^{(n)}:=\left[X_{i, j} \right]_{i, j=1}^{n} \in GL_{n}(\mathbb{F}_{p}) \\ \text{and } X_{i, j} = 1_{i=j} \text{ for } \max\{i, j\} > n \}.
\end{multline*}
Let $\mathbb{U}_{p}$ be the group of infinite upper unitriangular matrices over $\mathbb{F}_{p}$. Equip it with the product topology and the Borel $\sigma$-algebra.  

\begin{definition}
A probability measure $\rho$ on $\mathbb{U}_{p}$ is called {\it central} if $\rho(M) = \rho(gMg^{-1})$ for any measurable $M \subset \mathbb{U}_{p}$ and $g \in GL(\infty, \mathbb{F}_{p})$, such that $gMg^{-1} \subset \mathbb{U}_{p}$. A central measure ̺$\rho$ is called {\it ergodic} if it is an extreme point
of the convex set of all central probability measures.
\end{definition}
In other words, saying that a probability measure $\rho$ is central  is the same as saying that for any $n \times n$ upper unitriangular matrix $A$ the probability of the cylinder set  $\rho \left( \left \{X \in \mathbb{U}_{p} \mid X^{(n)} = A \right \} \right)$ depends only on the conjugacy class of $A$, i.e. only on the partition of $n$ specifying the Jordan normal form of $A$. A certain subclass of central measures corresponds to the unipotent traces of the group of infinite almost upper unitriangular matrices $\mathbb{GLU}$ over $\mathbb{F}_{p}$, see \cite[Sec.~4]{GKV} for details.  Similar to the case of $S_{\infty}$, one can show that the central probability measures are in bijection with the set $\mathcal{H}_{0, 1/p}$. More precisely, if $\rho$ is a central probability measure and $\lambda$ is a partition of size $n$, then the value of the corresponding harmonic function on $\lambda$ is $\rho \left( \left \{X \in \mathbb{U}_{p} \mid X^{(n)} = A \right \} \right) p^{(n(n-1)/2) -\sum (i-1)\lambda_{i}}$ for any $A \in GL(n, \mathbb{F}_{p})$ with Jordan form given by $\lambda$, see \cite{B1} for details.
In other words, the question of classifying  ergodic central measures on $\mathbb{U}_{p}$  can be interpreted as the question of identifying the minimal boundary of $\mathcal{Y}_{0, 1/p}$.
Thus Theorem \ref{Kerov conjecture} together with Proposition \ref{prop:boundary} imply \footnote{In this case we can also apply the Vershik-Kerov ring theorem directly.} the classification of the ergodic central measures on $\mathbb{U}_{p}$, that was conjectured in \cite[conjecture.~4.5]{GKV}. See also \cite{B2},\cite{BuP} for the law of large numbers for such measures. See \cite{V14} for the recent advances in the problem describing central measures on path spaces.

\subsection{Known limiting cases and approaches}
\label{sec:known}
In this section we briefly review the limiting cases of the Kerov conjecture (other than the Schur case $q=t$), that have been proved prior to the current work.  We also review some of the approaches that were suggested over the years to tackle this problem. However, none of those approaches so far have resulted in proving the most general case of  the Kerov conjecture. Classification of the homomorphisms with non-negative values on Macdonald functions has been established before in the following three cases:
\begin{enumerate}
\item
{\it Jack's functions}: $\delta > 0$ is fixed, set $t: = q^{\delta}$, and consider the limit $q \to 1$;

\item
{\it Monomial symmetric functions}: $q=0$, $t=1$;

\item
{\it Schur's $Q$-functions} $q=0$, $t=-1$.

\end{enumerate}

In the Jack's limit $P_{\lambda}(q, t)$ becomes the {\it Jack's symmetric function} $P_{\lambda}^{(1/\delta)}$ (in the notations of \cite{Mac}). Condition \eqref{Kerov-condition} of the Theorem \ref{Kerov conjecture}
then becomes: 
\begin{align*}
\theta(p_{1}) = \sum_{i=1}^{\infty} \alpha_{i} + \delta^{-1}\left(\sum_{j=1}^{\infty} \beta_{j} + \gamma\right), \quad \theta(p_{k}) = \sum_{i=1}^{\infty} \alpha_{i}^{k} + (-1)^{k-1}\delta^{-1}\sum_{j=1}^{\infty} \beta_{j}^{k} \text{ for } k \geq 2,
\end{align*}
for some $\alpha_{i}, \beta_{j}, \gamma \geq 0$, such that $\sum_{i =1}^{\infty} \alpha_{i} + \sum_{j =1}^{\infty} \beta_{j} < \infty$. This limiting case of the Theorem \ref{Kerov conjecture} (as well as identification of the homomorphisms with the minimal boundary of the Jack's graph) was proved in \cite{KOO}. 
The main idea to use the {\it shifted Jack polynomials} to obtain amenable to analysis formula for the relative dimensions in the Jack's graph. This tool was developed in \cite{OO2}, see also \cite{OO1} for the theory of the {\it shifted Schur functions}. The special case $\delta =1$ is again the Edrei-Thoma theorem. The special case $\delta = 1/2$ corresponds to the spherical unitary representations of the Gelfand pair consisting of the "even" infinite symmetric group $S_{2\infty} = \varinjlim S_{2n}$ and its hyperoctahedral subgroup $\varinjlim S_{n} \rtimes \mathbb{Z}_{2}^{n}$. The description of the minimal boundary in this special case was priorly obtained in \cite{Ok97} (see also \cite{Ol}). There also exist more general shifted Macdonald polynomials, see \cite{Ok98}, but it seems that in such generality the connection with the relative dimension is lost. 
The direct application of the Vershik-Kerov ring theorem in the Jack's case would once again  (see remark \ref{rem:LR}) be contigent on the positivity of the Littlewood-Richardson coefficients for the Jack's functions. The later positivity was conjectured in \cite{St}, but to the best of our knowledge is not proved yet.

For $q = 0$ and $t =1$ function $P_{\lambda}$ becomes the {\it monomial symmetric function} $m_{\lambda}: = \sum x^{\sigma}$, where the sum is over all distinct permutations $\sigma$ of $\lambda$. The description of the minimal boundary for $\mathcal{Y}_{0, 1}$ was first obtained in \cite{Kin} by J.F.C~Kingman. He was motivated by a problem of studying random partitions arising in population genetics, and introduced the notion of {\it partition structures}, which are essentially central (i.e constant on conjugacy classes) probability measures on $S_{\infty}$.   Extreme partition structures then correspond to 
homomorphisms $\theta: \Lambda \to \mathbb{R}$, which are non-negative on all the monomial symmetric functions.
In this case one only has the homomorphisms defined by  $x_{i} \to \alpha_{i} \geq 0$ for all $i \geq 1$. See also \cite{Ker89}.

The same desription was obtained for $q = 0$ and $t = -1$ in \cite{N}, see also \cite{Iv}. This case corresponds to projective characters of the infinite symmetric group $S_{\infty}$, which are linearized  by characters of the infinite spin-symmetric group. The Macdonald functions $Q_{\lambda}(0, -1)$ in this case are the Schur's $Q$-functions introduced by I.~Schur in \cite{S}, see \cite[III.8]{Mac} for details.

We would like also to mention another two approaches for proving the Edrei-Thoma case. In \cite{Ok94}, based on \cite{Ol}, the Proposition \ref{Edrei-Thoma2} is proved through description of all spherical representations of a pair $(G, K)$, where $G = S_{\infty} \times S_{\infty}$ is the infinite bisymmetric group and $K$ is its diagonal subgroup. The more recent paper \cite{BuG} proves Proposition \ref{Edrei-Thoma2} by establishing certain stochastic monotonicity in the Young graph $\mathcal{Y}$. It also conjectures that similar stochastic 
monotonicity holds for $\mathcal{Y}_{0, t}$, which would imply the Kerov conjecture for the Hall-Littlewood case. 

\subsection{Synopsis of the proof}
\label{sec:synopsis}
In this section we briefly summarize our proof of Theorem \ref{Kerov conjecture}, which is presented in sections \ref{sec:proof1} and \ref{sec:proof2}. The necessary background on the Macdonald functions is reviewed in sections \ref{sec:Macdonald Polynomials} and \ref{sec:Specializations}. Similar to the proof of Proposition \ref{Edrei-Thoma2} in  \cite{Whit}, \cite{ASW}, \cite{Edr}, as well as the proof of \cite{Th}, our proof is comprised of two parts.
Let
\begin{align}
\label{GeneratingCoefficients0}
g_{r}:=Q_{(r)} = \sum_{(r_{1}, r_{2}, \ldots): \ r_{1}+r_{2}+\cdots = r} \prod_{i \geq 1} \frac{(t; q)_{r_{i}}}{(q, q)_{r_{i}}}x_{i}^{r_{i}}, \quad \text{where} \quad (a; q)_{k}:=\prod_{m=1}^{k}\left(1-aq^{m-1}\right)
\end{align}
is the $q$-{\it Pochhammer symbol} (see section \ref{sec:Macdonald Polynomials} for details).

The first part (see section \ref{sec:proof1}) consists of showing that we can reduce Theorem \ref{Kerov conjecture} to the following generalization of Proposition \ref{ETreduced}.
\begin{theorem}
\label{Kerov conjecture reduced}
For fixed $q, t \in \mathbb{R}$, $|q|, |t| < 1$, if a homomorphism $\tau: \Lambda \to \mathbb{R}$ takes non-negative values on all the Macdonald functions $P_{\lambda}(q, t)$ and satisfies 
\begin{align}
\label{fastdecline1}
\lim_{r \to \infty} \tau(g_{r})^{1/r} = \lim_{r \to \infty} \tau(e_{r})^{1/r} = 0,
\end{align}
 then $\tau(p_{k})=0$ for all $k \geq 2$.
\end{theorem}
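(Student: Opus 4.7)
The two decay hypotheses combined with Macdonald-positivity mean that the generating series
\begin{align*}
G(z):=\sum_{r\ge 0}\tau(g_r)z^r, \qquad E(z):=\sum_{r\ge 0}\tau(e_r)z^r
\end{align*}
are entire functions of zero exponential type (by $\tau(g_r)^{1/r},\tau(e_r)^{1/r}\to 0$) with non-negative Taylor coefficients (since $g_r=Q_{(r)}$ and $e_r=P_{(1^r)}$ are themselves Macdonald functions, and for $|q|,|t|<1$ one has $Q_\lambda=b_\lambda P_\lambda$ with $b_\lambda>0$). Expanding the standard product identities $\sum_r g_r z^r=\prod_i (tx_iz;q)_\infty/(x_iz;q)_\infty$ and $\sum_r e_r z^r=\prod_i(1+x_iz)$ logarithmically and applying $\tau$ gives
\begin{align*}
\log G(z)=\sum_{k\ge 1}\frac{1-t^k}{k(1-q^k)}\tau(p_k)z^k, \qquad \log E(z)=\sum_{k\ge 1}\frac{(-1)^{k-1}}{k}\tau(p_k)z^k,
\end{align*}
so the conclusion $\tau(p_k)=0$ for $k\ge 2$ is equivalent to $\log G$ and $\log E$ being linear polynomials in $z$, i.e.\ $G(z)=e^{cz}$ and $E(z)=e^{c'z}$ for some $c,c'\ge 0$.

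My plan is to exploit Macdonald-positivity on partition shapes of more than one row or column to upgrade the analytic hypotheses into this rigidity. Positivity of $\tau(Q_{(r,s)})$ for two-row shapes, via the Macdonald--Pieri expansion of $g_r g_s$ in the $Q$-basis, yields log-concavity-type inequalities linking $\tau(g_r)\tau(g_s)$ to $\tau(g_{r+1})\tau(g_{s-1})$ with explicit $(q,t)$-rational coefficients; iterating with longer-row and rectangular $\lambda$ produces a hierarchy of polynomial inequalities on the Taylor coefficients of $G$. A parallel chain applies to $E$ using the dual Pieri rule for $e_r\cdot Q_\mu$. Combined with the zero-type decay $\tau(g_r)^{1/r}\to 0$, I would argue that the only sequences $\{\tau(g_r)\}$ consistent with both the full positivity hierarchy and the growth bound are those of the form $\tau(g_r)=c^r/r!$; the same conclusion for $\{\tau(e_r)\}$ would then give $G(z)=e^{cz}$ and $E(z)=e^{c'z}$, finishing the theorem.

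The main obstacle is turning the inequality hierarchy into this rigidity statement. In the Schur case ($q=t$), Macdonald-positivity specializes to total positivity of the Toeplitz matrix $(\tau(h_{j-i}))$, and Edrei's Nevanlinna-theoretic analysis of ratios like $G(z^2)/G(z)^2$ supplies the rigidity directly; for generic $(q,t)$ there is no comparably clean Jacobi--Trudi reduction, and $Q$-positivity encodes a strictly weaker (but still rich) set of inequalities. The technical heart of the argument will therefore be to identify a specific family of $Q_\lambda$-positivity constraints — most naturally rectangular and staircase shapes, coupled by the Macdonald involution $\omega_{q,t}\colon P_\lambda(q,t)\mapsto Q_{\lambda'}(t,q)$ which swaps the roles of $G$ and $E$ under $(q,t)\leftrightarrow(t,q)$ — whose joint use is strong enough to replace Nevanlinna theory and force the desired log-linearity of $G$ and $E$. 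Once this rigidity is in place, the vanishing of $\tau(p_k)$ for $k\ge 2$ follows immediately from the log-expansions above.
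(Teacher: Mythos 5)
Your setup is correct and your diagnosis of where the difficulty lies is accurate, but what you present is a program rather than a proof, and the program's decisive step is explicitly left open. You say yourself that \emph{``the main obstacle is turning the inequality hierarchy into this rigidity statement''} and that \emph{``the technical heart of the argument will therefore be to identify a specific family of $Q_\lambda$-positivity constraints \dots\ strong enough to replace Nevanlinna theory.''} That is precisely the theorem: everything before that sentence (generating functions of zero exponential type, the $\log G$/$\log E$ expansions, the observation that two-row Pieri positivity gives $(q,t)$-weighted log-concavity in $\{\tau(g_r)\}$) is standard bookkeeping, and the claimed rigidity ($\tau(g_r)=c^r/r!$) has not been derived from it. There is no reason given that a hierarchy of two-row/rectangular/staircase inequalities plus zero-type growth forces this exact sequence; indeed, the paper never establishes such a characterization of $\{\tau(g_r)\}$ at all, which suggests that closing your program along these lines is at least delicate.

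What the paper actually does is different in shape and sidesteps the rigidity-of-$\{\tau(g_r)\}$ question entirely. Rather than pinning down the generating function, it argues by contradiction at the level of $\tau(p_k)$: if $k\ge 2$ is minimal with $\tau(p_k)\neq 0$, then $\bigl|\tau\bigl((g_k - g_1^k/k!)Q_\mu\bigr)\bigr| = \delta'\,\tau(Q_\mu)$ for \emph{every} $\mu$, with $\delta'>0$ fixed. On the other hand, expanding both $g_kQ_\mu$ and $g_1^kQ_\mu$ via the Pieri formula, the coefficients of $\tau(Q_\lambda)$ in the two expansions agree up to an error controlled by $(\max\{|q|,|t|\})^{d}$ whenever the $k$ boxes of $\lambda\setminus\mu$ are pairwise at distance $>d$. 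So it suffices to produce, for every $\epsilon>0$ and $d$, \emph{some} partition $\mu$ for which the ``clustered'' terms $\sum_{\mu\prec_{\leq d}\lambda}\tau(Q_\lambda)$ are below $\epsilon\,\tau(Q_\mu)$ (Lemma~\ref{lemma:badsteps}, the ``diffusivity'' lemma). That existence is proved by a soft probabilistic/combinatorial argument on paths in the Young graph (Lemmas \ref{lemma:2tbaleauxbound} and \ref{lemma:comparison}), using the decay hypothesis \eqref{fastdecline1} through Lemma~\ref{lemma:nolongrows}. The decay hypothesis enters only to control $\tau(Q_\lambda)/\tau(Q_\mu)$ for large skew sizes, not through any analytic-function rigidity. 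So while your instinct to lean on Pieri positivity is in the right neighborhood, the concrete mechanism in the paper (compare $g_k$ to $g_1^k/k!$ against a well-chosen large $\mu$ and exploit spreading of added boxes) is essentially absent from your plan, and your proposal as written has a genuine gap at exactly the point where a new idea is needed.
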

To prove this reduction we work with the generating function $\Pi(\theta):= \sum_{r=0}^{\infty} \theta(g_{r})z^{r}$. Proving condition \eqref{Kerov-condition} of Theorem \ref{Kerov conjecture} is equivalent to showing that
\begin{align}
\Pi(\theta) = e^{\gamma z} \cdot \prod_{i=1}^{\infty} \frac{(t \alpha_{i} z; q)_{\infty}}{(\alpha_{i} z; q)_{\infty}} \cdot \prod_{j=1}^{\infty} \left(1 + \beta_{j}z \right),
\end{align}
see section \ref{sec:Specializations} for details. The key result of this part is the "pole removal" Lemma \ref{lemma:poleremoval}, which shows that if $\lim_{r \to \infty} \theta(g_{r+1})/\theta(g_{r}) = \alpha > 0$, then multiplication of $\Pi(\theta)$ by $(\alpha z; q)_{\infty}/(t\alpha z; q)_{\infty}$  produces a generating function of another homomorphism with non-negative values on all the Macdonald functions. This operation allows us to reduce by $1$ the multiplicity of the smallest pole $1/\alpha$ of $\Pi(\theta)$. By a (possibly infinite) sequence of such operations we can remove all the poles of $\Pi(\theta)$. Then, using the duality involution, we can similarly  remove all the zeroes of $\Pi(\theta)$ by factoring out the $1+\beta z$ terms. This completes the reduction of $\theta$ to $\tau$, and of Theorem \ref{Kerov conjecture} to Theorem \ref{Kerov conjecture reduced}. Thus, the first part of our proof develops in the Macdonald setting the analogues of the arguments of \cite{Whit} and \cite{ASW}. On the way we have to overcome some technical difficulties specific to the Macdonald case, such as showing that sequence $\{\theta(g_{r})\}$ doesn't behave in a "wild" way, see Lemma \ref{lemma:limit}. The key tool repeatedly used throughout the first part of the proof is the Pieri formula \eqref{Pieri}.

The second part of our proof (see section \ref{sec:proof2}) is devoted to proving Theorem \ref{Kerov conjecture reduced}. It is not clear how to generalize the Nevanlinna theory approach of \cite{Edr} to the Macdonald setting. This is an interesting problem on its own, but we have found a different route, which is a combination of using the Pieri formulas \eqref{Pieri}, \eqref{Pieridual}, and soft combinatorial methods. For the Schur case $q=t$ this approach also gives a new proof of Proposition \ref{ETreduced}. In a nutshell, the proof of Theorem \ref{Kerov conjecture reduced} works as follows.
 
Suppose $k \geq 2$ is the smallest, such that $\tau(p_{k}) \neq 0$. Then $\tau(g_{k}) \neq \tau(g_{1})^{k}/k!$, and so for any partition $\mu$ the absolute value of the difference between $\tau\left(g_{k}Q_{\mu}\right)$ and $\tau\left(g_{1}^{k}Q_{\mu}/k!\right)$ is of order $\tau(Q_{\mu})$. On the other hand, with the use of the Pieri formula we can expand both $\tau\left(g_{k}Q_{\mu}\right)$ and $\tau\left(g_{1}^{k}Q_{\mu}/k!\right)$ as weighted sums of $\tau(Q_{\lambda})$ over those $\lambda$, which are obtained by adding $k$ boxes to $\mu$. We then compare these expansions term by term and find out that the coefficients  of $\tau(Q_{\lambda})$ in both are close, if the $k$ boxes of $\lambda\backslash \mu$ are far apart from each other. To obtain contradiction we would like to show that in fact the difference between $\tau\left(g_{k}Q_{\mu}\right)$ and $\tau\left(g_{1}^{k}Q_{\mu}/k!\right)$ is of order smaller than $\tau(Q_{\mu})$, at least for some partition $\mu$. Thus it will be sufficient to deduce from \eqref{fastdecline1} that (at least for some $\mu$) most contributions to $\tau\left(g_{k}Q_{\mu}\right)$ and $\tau\left(g_{1}^{k}Q_{\mu}/k!\right)$ come from terms with the $k$ boxes of $\lambda\backslash \mu$ far apart from each other. In a way, we need to show that certain "diffusivity" takes place in the Young graph with the Macdonald multiplicities. The key result of the second part is proving this "diffusivity", see Lemma \ref{lemma:badsteps}.

How to find such partition $\mu$, for which the "diffusivity" holds? We expect that in fact it holds for a typical partition of large enough size. Since such partition has many outer corners,  it is reasonable to expect that the $k$ added boxes will most likely be far apart from each other. The proof of Lemma \ref{lemma:badsteps} is essentially a more delicate version of choosing a large random partition, i.e. an application of the probabilistic method in combinatorics. See Lemmas \ref{lemma:2tbaleauxbound} and \ref{lemma:comparison} for details.

\subsection{Acknowledgments} The author wishes to express his gratitude to Alexei Borodin and Vadim Gorin for useful comments and discussions.

\section{Macdonald functions}
\label{sec:Macdonald Polynomials}
This section is a brief review of facts and notations concerning the Macdonald functions. It is based on \cite[Ch.~VI]{Mac}, and we mostly follow notations of this book. 
A {\it partition} $\lambda$ is a non-increasing finite sequence $\lambda_{1} \geq \lambda_{2} \geq \cdots \geq \lambda_{\ell}$ of positive integers. Parameter $\ell = \ell(\lambda)$ is called the {\it length} of $\lambda$. We set $\lambda_{i}=0$ for $i>\ell$. Let $|\lambda| := \lambda_{1} + \cdots + \lambda_{\ell}$ be the {\it size} of $\lambda$. It is convenient to represent a partition by its {\it Young diagram} -- a left-justified array of boxes with $\lambda_{i}$ boxes in the $i$-th row. We will use the so called English way to depict Young diagrams (see Fig. \ref{YD}). We will use the term "partition" also for its Young diagram. For $i \leq \ell(\lambda)$ and $j \leq \lambda_{i}$ denote by $(i, j)$ the $j$-th box in the $i$-th row of $\lambda$. Denote by $\lambda'_{j}$ the length of the $j$-th column of $\lambda$. Partition $\lambda' := \left(\lambda'_{1}, \lambda'_{2}, \ldots, \lambda'_{\lambda_{1}}\right)$ is called the {\it conjugate} partition. We will use the following notations:
\begin{enumerate}
\item
$\mu \subset \lambda$ if $\ell(\mu) \leq \ell(\lambda)$ and $\mu_{i} \leq \lambda_{i}$ for any $1 \leq i \leq \ell(\mu)$, i.e. each box of $\mu$ is contained in $\lambda$.
\item
$\mu \prech \lambda$ if $\mu \subset \lambda$ and $\lambda\backslash\mu$ is a {\it horizontal strip}, i.e. has at most one box in each column.
\item
$\mu \precv \lambda$ if $\mu \subset \lambda$ and $\lambda\backslash\mu$ is a {\it vertical strip}, i.e. has at most one box in each row.
\end{enumerate}
\begin{figure}[h]
\label{YD}
\includegraphics[width = 
0.4\textwidth]{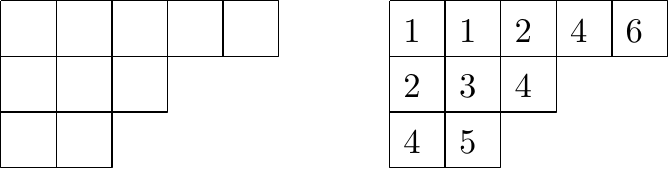}
\caption{Left: Young diagram corresponding to partition $(5, 3, 2)$. Right: a semistandard tableau of shape $(5, 3, 2)$.}
\end{figure}
A {\it semistandard tableau} of shape $\lambda$ is a filling of boxes of $\lambda$ with positive integers that is weakly increasing along each row and strictly increasing down each column (see Fig. \eqref{YD}). Denote by $\mathcal{SS}(\lambda)$ the set of such fillings.  For a tableau $T$ we denote by $T_i$ the shape of the subtableau formed by boxes with entries $\leq i$. The condition of being  semistandard is equivalent to saying that each $T_i$ is a valid partition and $T_{i-1} \prech T_{i}$ for any $i \geq 1$. We will call the sequence $\left \{|T_{i}|-|T_{i-1}|\right \}_{i=1}^{\infty}$ the {\it content} of $T$. Note that $|T_{i}|-|T_{i-1}|$ is the number of entries of $T$ equal to $i$.  A semistandard tableau of shape $\lambda$ is called {\it standard} if  each $1 \leq k \leq |\lambda|$ appears exactly once as its entry. Denote by $\mathcal{ST}(\lambda)$ the set of standard tableaux of shape $\lambda$.  We will use $Sh(T)$ to denote the shape of tableau $T$. For $\mu \subset \lambda$ we can also define semistandard and standard {\it skew tableaux} of shape $\lambda / \mu$ as fillings of boxes of $\lambda \backslash \mu$ with integers, that respectively satisfy the same properties, as for the usual tableaux. Denote by $\mathcal{ST}(\lambda/\mu)$ the set of standard skew tableaux of shape $\lambda/\mu$. 

\smallskip

Fix two parameters $q, t$. For a partition $\lambda$ and its box $s=(i, j)$ let 
\begin{align}
b_{\lambda}(s) = b_{\lambda}(i, j):= \frac{1-q^{\lambda_{i}-j}t^{\lambda'_{j}-i+1}}{1-q^{\lambda_{i}-j+1}t^{\lambda'_{j}-i}} \quad \text{\and} \quad b_{\lambda} := \prod_{s \in \lambda} b_{\lambda}(s).
\end{align}
If a box $s$ is outside of $\lambda$, then set $b_{\lambda}(s) := 1$. For $\mu \subset \lambda$ denote by $R_{\lambda/\mu}$ (respectively by $C_{\lambda/\mu}$) the union of all rows (respectively columns) containing  boxes from $\lambda \backslash \mu$. For partitions $\mu \prech \lambda$ let
\begin{align}
\label{PsiPhi}
\psi_{\lambda/\mu} := \prod_{s \in R_{\lambda/\mu} - C_{\lambda/\mu}} \frac{b_{\mu}(s)}{b_{\lambda}(s)} \quad \text{and} \quad \phi_{\lambda/\mu} := \prod_{s \in C_{\lambda/\mu}} \frac{b_{\lambda}(s)}{b_{\mu}(s)},
\end{align}
For partitions $\mu \precv \lambda$ let
\begin{align}
\label{PsiPhiPrime}
\psi'_{\lambda/\mu} := \prod_{s \in C_{\lambda/\mu} - R_{\lambda/\mu}} \frac{b_{\lambda}(s)}{b_{\mu}(s)} \quad \text{and} \quad \phi'_{\lambda/\mu} := \prod_{s \in R_{\lambda/\mu}} \frac{b_{\mu}(s)}{b_{\lambda}(s)}.
\end{align}
For a tableau $T$ let 
\begin{align}
\psi(T) := \prod_{i \geq 1} \psi_{T_{i}/T_{i-1}} \quad \text{and} \quad \phi(T) := \prod_{i \geq 1} \phi_{T_{i}/T_{i-1}}. 
\end{align}
For a partition $\lambda$ define the {\it Macdonald symmetric functions}
\begin{align}
\label{tableausum}
P_{\lambda} = \sum_{T \in \mathcal{SS}(\lambda)}  \psi(T) x^{T} \quad \text{and} \quad Q_{\lambda} = \sum_{T \in \mathcal{SS}(\lambda)} \phi(T) x^{T},
\end{align}
where $\displaystyle x^{T}:= \prod_{i \geq 1} x_{i}^{|T_{i}|-|T_{i-1}|}$. Then one can show that $Q_{\lambda} = b_{\lambda}P_{\lambda}$. Note that $P_{(1^{r})}=e_{r}$. For $q=t$ we have $\psi(T) = \phi(T)=1$, so in this case $P_{\lambda} = Q_{\lambda} = S_{\lambda}$, where $S_{\lambda}$ is the Schur function. Let
\begin{align}
\label{GeneratingCoefficients}
g_{r}:=Q_{(r)} = \sum_{(r_{1}, r_{2}, \ldots): \ r_{1}+r_{2}+\cdots = r} \prod_{i \geq 1} \frac{(t; q)_{r_{i}}}{(q, q)_{r_{i}}}x_{i}^{r_{i}}, \quad \text{where} \quad (a; q)_{k}:=\prod_{m=1}^{k}\left(1-aq^{m-1}\right)
\end{align}
is the $q$-{\it Pochhammer symbol}. Then one can show that $\Lambda = \mathbb{R}[g_{1}, g_{2}, g_{3}, \ldots]$. Moreover, $\Lambda$ is freely generated as an $\mathbb{R}$-algebra by $\{g_{n}\}_{n=1}^{\infty}$. Set $g_{0}:=1$. For a partition $\lambda$ set $\displaystyle g_{\lambda}:= \prod_{i \geq 1} g_{\lambda_{i}}$. 

\smallskip

The most important tool for us will be the Pieri formulas:
\begin{align}
\label{Pieri}
Q_{\mu}g_{r} = \sum_{\mu \prech \lambda, \ |\lambda|-|\mu|=r} \psi_{\lambda/\mu}Q_{\lambda} \quad \text{and} \quad P_{\mu}g_{r} = \sum_{\mu \prech \lambda, \ |\lambda|-|\mu|=r} \phi_{\lambda/\mu}P_{\lambda}; \\
\label{Pieridual}
Q_{\mu}e_{r} = \sum_{\mu \precv \lambda, \ |\lambda|-|\mu|=r} \phi'_{\lambda/\mu}Q_{\lambda} \quad \text{and} \quad P_{\mu}e_{r} = \sum_{\mu \precv \lambda, \ |\lambda|-|\mu|=r} \psi'_{\lambda/\mu}P_{\lambda}
\end{align}
for any partition $\lambda$ and $r \geq 1$.
One can define an automorphism $\omega_{q, t}$ of $\Lambda$ by setting $\omega_{q, t}(g_{r}) = e_{r}$ for all $r \geq 1$. It has the property that for any partition $\lambda$
\begin{align}
\label{Duality}
\omega_{q, t}(Q_{\lambda}(x; q, t)) = P_{\lambda'}(x; t, q) \quad \text{and} \quad
\omega_{q, t}(P_{\lambda}(x; q, t)) = Q_{\lambda'}(x; t, q).
\end{align}
In particular, $\omega_{t, q} \circ \omega_{q, t} = Id$. It can also be shown that $\omega_{q, t}(p_{r}) = (-1)^{r-1}\frac{1-q^{r}}{1-t^{r}}p_{r}$. 

\smallskip

For partitions $\mu \subset \lambda$ one can also define {\it skew Macdonald functions} $P_{\lambda\slash\mu}$ and $Q_{\lambda \slash\mu}$ by using formulas \eqref{tableausum} for sums over semistandard skew  tableaux of shape $\lambda\slash\mu$. In this definition we take $T_{i}$ to be the shape of the union of $\mu$ and the subtableau of $T$ comprised of boxes with entries $\leq i$. In particular, $P_{\lambda \slash \emptyset}=P_{\lambda}$ and $Q_{\lambda \slash \emptyset}=Q_{\lambda}$. We also have
\begin{align}
\label{skewDuality}
\omega_{q, t}(Q_{\lambda \slash \mu}(x; q, t)) = P_{\lambda' \slash \mu'}(x; t, q) \quad \text{and} \quad
\omega_{q, t}(P_{\lambda \slash \mu}(x; q, t)) = Q_{\lambda' \slash \mu'}(x; t, q).
\end{align}
The $q$-Gauss summation formula implies that
\begin{align}
\label{qGauss}
\frac{(az; q)_{\infty}}{(z; q)_{\infty}}= \sum_{n=0}^{\infty}\frac{(a; q)_{n}}{(q; q)_{n}}z^{n}
\end{align}
as formal power series. It follows from \eqref{qGauss} and \eqref{GeneratingCoefficients} that
\begin{align}
\sum_{n=0}^{\infty} g_{n}z^{n} = \prod_{i \geq 1} \frac{(tzx_{i}; q)_{\infty}}{(zx_{i}; q)_{\infty}}.
\end{align}
It can also be shown that 
\begin{align}
\label{powerexponent}
\sum_{n=0}^{\infty} g_{n}z^{n} = \exp\left(\sum_{n=1}^{\infty} \frac{1}{n} \frac{1-t^{n}}{1-q^{n}}p_{n}z^{n}\right)
\end{align} 
as formal power series.

\section{Positive Specializations}
\label{sec:Specializations}
From now on we assume that $q, t \in \mathbb{R}$ and $|q|, |t| < 1$. A {\it specialization} is a homomorphism $\theta: \Lambda \to \mathbb{R}$. It is said to be $(q, t)$-{\it Macdonald-positive} if $\theta(P_{\lambda})$ is  non-negative for any partition $\lambda$ (equivalently, $\theta(Q_{\lambda})= b_{\lambda}\theta(P_{\lambda})$ is non-negative for any partition $\lambda$). If $\theta$ is a $(q, t)$- Macdonald-positive specialization, then $w_{t, q}(\theta):=\theta \circ \omega_{t, q}$ is a $(t, q)$-Macdonald-positive specialization due to \eqref{Duality}. We will call such $w_{t, q}(\theta)$ the {\it dual} of $\theta$. Clearly, $w_{q, t}(w_{t, q}(\theta)) = \theta$. For a specialization $\theta$ we define its {\it generating function} $\Pi(\theta) = \Pi_{q, t} (\theta)$ as the formal power series $\displaystyle \sum_{n=0}^{\infty} \theta(g_{n})z^{n}$. Given two specializations $\theta_{1}, \theta_{2}$ one can define their union $\theta = (\theta_{1}, \theta_{2})$ by setting $\theta(p_{n}) := \theta_{1}(p_{n})+\theta_{2}(p_{n})$ for all $n \geq 1$. Then by \eqref{powerexponent} we have $\Pi((\theta_{1}, \theta_{2})) = \Pi(\theta_{1})\Pi(\theta_{2})$, which could also be taken as an alternative definition of $(\theta_{1}, \theta_{2})$. One can show that
\begin{align}
\label{SpecUnion}
(\theta_{1}, \theta_{2})\left(P_{\lambda/\mu}\right) = \sum_{\nu: \ \mu \subset \nu \subset \lambda} \theta_{1}\left(P_{\lambda/\nu}\right)\theta_{2}\left(P_{\nu/\mu}\right) \quad \text{and} \quad 
(\theta_{1}, \theta_{2})\left(Q_{\lambda/\mu}\right) = \sum_{\nu: \ \mu \subset \nu \subset \lambda} \theta_{1}\left(Q_{\lambda/\nu}\right)\theta_{2}\left(Q_{\nu/\mu}\right).
\end{align}
Note that 
\begin{align}
\label{dualunion}
w_{q, t}((\theta_{1}, \theta_{2})) = (w_{q, t}(\theta_{1}), w_{q, t}(\theta_{2})).
\end{align}
The following specializations are $(q, t)$-Macdonald-positive and, in fact, satisfy $\theta\left(Q_{\lambda/\mu}\right) \geq 0$ for any partitions $\mu \subset \lambda$:
\begin{enumerate}
\item
$\tau_{\alpha}$ for any $\alpha \geq 0$ defined by setting $x_{1} \to \alpha$, $x_{i} \to 0$ for $i \geq 2$. In other words, $\tau_{\alpha}(p_{n}) = \alpha^{n}$. Positivity follows from \eqref{tableausum}.  More precisely, 
\begin{align}
\tau_{\alpha}\left(Q_{\lambda/\mu}\right) =
\begin{cases}
\phi_{\lambda/\mu}(q, t)\alpha^{|\lambda|-|\mu|}, & \quad \text{if }\mu \prech \lambda, \\
0, & \quad \text{otherwise}.
\end{cases}
\end{align}
$\tau_{\alpha}(g_{n}) = \frac{(t; q)_{n}}{(q; q)_{n}}\alpha^{n}$, so by \eqref{qGauss} the generating function is $\Pi(\tau_{\alpha}) = (t\alpha z; q)_{\infty}/{(\alpha z; q)_{\infty}}$. 

\smallskip

\item
$w_{q, t}(\tau_{\beta})$ for any $\beta \geq 0$. By \eqref{Duality} we have
\begin{align}
w_{q, t}(\tau_{\beta})\left(Q_{\lambda/\mu}\right) =
\begin{cases}
\psi_{\lambda'/\mu'}(t, q)\beta^{|\lambda|-|\mu|}, & \quad \text{if }\mu \precv \lambda, \\
0, & \quad \text{otherwise}.
\end{cases}
\end{align}
This specialization can be defined by sending $p_{n} \to (-1)^{n-1}\frac{1-q^{n}}{1-t^{n}}\beta^{n}$ and hence has the generating function $\displaystyle \Pi(w_{q, t}(\tau_{\beta})) = \exp \left( \sum_{n=1}^{\infty} (-1)^{n-1}\beta^{n}z^{n}/n\right) = 1+\beta z$ by \eqref{powerexponent}. 

\smallskip

\item
$\displaystyle \tau_{Pl, \gamma}: = \lim_{m \to \infty} T_{m, \gamma}$ for any $\gamma \geq 0$, where
\begin{align*}
T_{m, \gamma}:=\underbrace{\left(\tau_{(1-q)\gamma/((1-t)m)}, \ \ldots, \ \tau_{(1-q)\gamma/((1-t)m)}\right)}_{\text{union of } m \text{ specializations}}.
\end{align*}
The specialization $\tau_{Pl, \gamma}$ can be defined by sending $p_{1} \to \frac{1-q}{1-t} \gamma$, $p_{n} \to 0$ for $n \geq 2$, hence by \eqref{powerexponent} it has the generating function $\Pi(\tau) = e^{\gamma z}$. Specialization $\tau_{Pl, \gamma}$ is called the {\it Plancherel} specialization with parameter $\gamma$. Denote by $\mathcal{SS}(\lambda/\mu, m)$ the set of semistandard skew tableaux of shape $\lambda/\mu$ with entries $1, 2, \ldots, m$. Denote by $\mathcal{S}(\lambda/\mu, k)$ the set of semistandard skew tableaux $T$ of shape $\lambda/\mu$ with entries $1, 2, \ldots, k$, each of which appears at least once.  Then by \eqref{SpecUnion} 
\begin{multline}
T_{m, \gamma}(Q_{\lambda/\mu}) = \left(\frac{1-q}{1-t} \gamma \right)^{|\lambda|-|\mu|} \left(\frac{1}{m} \right)^{|\lambda|-|\mu|}\sum_{T \in \mathcal{SS}(\lambda/\mu, m)} \phi_{T}(q, t) \\ = 
 \left(\frac{1-q}{1-t} \gamma \right)^{|\lambda|-|\mu|} \left(\frac{1}{m} \right)^{|\lambda|-|\mu|}\sum_{k=0}^{|\lambda|-|\mu|}\left(\begin{array}{c} m \\ k \end{array} \right) \cdot \sum_{T \in \mathcal{S}(\lambda/\mu, k)} \phi_{T}(q, t).
\end{multline}
Taking limit $m \to \infty$ and using the fact that $\displaystyle \lim_{m \to \infty} \left(\frac{1}{m} \right)^{|\lambda|-|\mu|}\left(\begin{array}{c} m \\ k \end{array} \right) = 0$ for $k < |\lambda|-|\mu|$ we get
\begin{align}
\tau_{Pl, \gamma}(Q_{\lambda/\mu}) =  \frac{1}{(|\lambda|-|\mu|)!} \cdot \left(\frac{1-q}{1-t} \gamma \right)^{|\lambda|-|\mu|} \sum_{T \in \mathcal{ST}(\lambda/\mu)} \phi_{T}(q, t).
\end{align}
\end{enumerate}

It follows from \eqref{SpecUnion} that any union of finitely many specializations of the above-defined three types is also $(q, t)$-Macdonald-positive. By taking limit we get the following proposition.
\begin{proposition}
Suppose $q, t \in \mathbb{R}$, $|q|, |t| < 1$. If a specialization $\theta$ is defined by the generating function
\begin{align}
\label{MainGenFun}
\Pi(\theta) = e^{\gamma z} \cdot \prod_{i=1}^{\infty} \frac{(t \alpha_{i} z; q)_{\infty}}{(\alpha_{i} z; q)_{\infty}} \cdot \prod_{j=1}^{\infty} \left(1 + \beta_{j}z \right)
\end{align}
for some $\alpha_{i}, \beta_{j}, \gamma \geq 0$, such that $\displaystyle \sum_{i=1}^{\infty} \alpha_{i} + \sum_{j=1}^{\infty} \beta_{j} < \infty$, then $\theta$ is $(q, t)$-Macdonald-positive.
\end{proposition}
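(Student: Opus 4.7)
The plan is to approximate $\theta$ by finite unions of the three basic Macdonald-positive specializations $\tau_{\alpha}$, $w_{q,t}(\tau_{\beta})$, and $\tau_{Pl,\gamma}$ and then pass to a limit, exploiting the fact that $\Lambda$ is freely generated by $\{g_r\}_{r\geq 1}$.

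First, for each $N \geq 1$, define the finite union
\begin{align*}
\theta_{N} := \bigl(\tau_{\alpha_{1}}, \ldots, \tau_{\alpha_{N}},\ w_{q,t}(\tau_{\beta_{1}}), \ldots, w_{q,t}(\tau_{\beta_{N}}),\ \tau_{Pl,\gamma}\bigr).
\end{align*}
The three basic types have been shown to satisfy the stronger positivity $\theta(Q_{\lambda/\mu}) \geq 0$ for all $\mu \subset \lambda$. Iterating the union formula \eqref{SpecUnion}, a finite union of such specializations also satisfies $\theta_{N}(Q_{\lambda/\mu}) \geq 0$, and in particular $\theta_{N}(Q_{\lambda}) \geq 0$ for every partition $\lambda$. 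By the multiplicativity $\Pi((\theta_{1},\theta_{2})) = \Pi(\theta_{1})\Pi(\theta_{2})$ (which follows from \eqref{powerexponent}), the generating function of $\theta_{N}$ is precisely the $N$-th partial product
\begin{align*}
\Pi(\theta_{N}) = e^{\gamma z} \cdot \prod_{i=1}^{N} \frac{(t\alpha_{i}z; q)_{\infty}}{(\alpha_{i}z; q)_{\infty}} \cdot \prod_{j=1}^{N}(1+\beta_{j}z).
\end{align*}

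Second, I pass to the limit $N \to \infty$. Writing each factor as $\frac{(t\alpha_{i}z;q)_{\infty}}{(\alpha_{i}z;q)_{\infty}} = 1 + \sum_{n\geq 1}\frac{(t;q)_{n}}{(q;q)_{n}}\alpha_{i}^{n}z^{n}$ via \eqref{qGauss} and $(1+\beta_{j}z) = 1+\beta_{j}z$, one sees that for each fixed $r$ the coefficient of $z^{r}$ in the deviation from $1$ of each factor is bounded by $C_{r}(q,t)\,\alpha_{i}$ or $\beta_{j}$ respectively. Since $\sum_{i}\alpha_{i}+\sum_{j}\beta_{j} < \infty$, the infinite product converges as a formal power series and the coefficient of $z^{r}$ in $\Pi(\theta_{N})$ converges to the coefficient of $z^{r}$ in $\Pi(\theta)$ for every $r$. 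Because $\theta(g_{r})$ is by definition that coefficient, we conclude $\theta_{N}(g_{r}) \to \theta(g_{r})$ for all $r \geq 1$.

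Third, since $\Lambda = \mathbb{R}[g_{1}, g_{2}, \ldots]$ is freely generated by $\{g_{n}\}_{n\geq 1}$, every $P_{\lambda}$ (equivalently $Q_{\lambda}$) is a polynomial in finitely many $g_{r}$'s (those with $r \leq |\lambda|$). The map that sends a specialization to its value on $P_{\lambda}$ is therefore a fixed polynomial function of $\bigl(\theta(g_{1}), \ldots, \theta(g_{|\lambda|})\bigr)$ and is in particular continuous. Hence $\theta_{N}(P_{\lambda}) \to \theta(P_{\lambda})$, and since $\theta_{N}(P_{\lambda}) \geq 0$ for every $N$, the limit satisfies $\theta(P_{\lambda}) \geq 0$, which is exactly $(q,t)$-Macdonald positivity. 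The only technical point is the formal-power-series convergence of the infinite product, which is a routine $\ell^{1}$ estimate given the summability hypothesis on $\{\alpha_{i}\}$ and $\{\beta_{j}\}$; there is no substantial obstacle, since all heavy lifting (positivity of the three atomic building blocks and the multiplicative behavior of $\Pi$) has already been established.
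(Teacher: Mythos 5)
Your proof is correct and follows essentially the same route as the paper: form finite unions of the three atomic positive specializations, observe positivity is preserved under finite union via \eqref{SpecUnion} and multiplicativity of $\Pi$, and pass to the limit using coefficientwise convergence of the generating functions (which in the paper is the content of Remark \ref{rem:convergence}) together with the fact that each $P_{\lambda}$ is a polynomial in finitely many $g_{r}$.
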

It is clear from the above arguments that condition \eqref{Kerov-condition} of Theorem \ref{Kerov conjecture} is equivalent to \eqref{MainGenFun}. Hence to prove Theorem \ref{Kerov conjecture} we need to show that if $\theta$ is a $(q, t)$-Macdonald-positive specialization, then \eqref{MainGenFun} holds for some appropriate choice of $\alpha_{i}, \beta_{j}, \gamma$. 

\begin{remark}
\label{rem:convergence} 
Note that $\displaystyle \sum_{i=1}^{\infty} \alpha_{i} < \infty$ implies the convergence to a nonzero limit of $\displaystyle \prod_{i=1}^{\infty} \frac{(t \alpha_{i} z; q)_{\infty}}{(\alpha_{i} z; q)_{\infty}}$ for any $z \in \mathbb{C}$, such that $|z| < 1/\alpha_{1}$.
Indeed, by taking logarithm and using the limit comparison test, we know that absolute convergence of $\displaystyle \sum_{n=1}^{\infty} p_{n}$ for $p_{n} \in \mathbb{C}$, $|p_{n}| < 1$ implies convergence to a nonzero limit of $\displaystyle \prod_{n=1}^{\infty} (1+p_{n})$.  And  $\displaystyle \sum_{i=1}^{\infty} \alpha_{i} < \infty$ implies $\displaystyle \sum_{i=1}^{\infty} \sum_{k=0}^{\infty} \left \lvert \frac{1-tq^{k}\alpha_{i}z}{1-q^{k}\alpha_{i}z} - 1 \right \rvert < \infty$, since $|q| < 1$. This convergence is uniform on compact sets, so it also implies coefficientwise convergence of power series. 
\end{remark}

\section{Proof. Part I: Pole removal}
\label{sec:proof1}
\begin{proof}[Proof of Theorem \ref{Kerov conjecture}]

Let $\theta$ be a $(q, t)$-Macdonald positive specialization. Observe that $\theta(g_{n}) = \theta(Q_{(n)}) \geq 0$.
\begin{lemma}
\label{lemma:zerovalue}
If $\theta(Q_{\mu}) = 0$, then $\theta(Q_{\lambda}) = 0$ for any $\lambda \supset \mu$.
\end{lemma}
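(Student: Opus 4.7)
The plan is to use the Pieri formula \eqref{Pieri} with $r=1$, combined with an induction on $|\lambda|-|\mu|$, exploiting the fact that all relevant coefficients are strictly positive under our standing hypothesis $|q|, |t| < 1$.

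First I would record the key positivity. For $|q|, |t| < 1$, every quantity $1 - q^{a}t^{b}$ with $a,b \geq 0$ not both zero lies in $(0, 2)$, and in particular $b_{\lambda}(s) > 0$ for every box $s \in \lambda$. Consequently $\psi_{\lambda/\mu} > 0$ whenever $\mu \prech \lambda$, since it is a finite product of ratios $b_{\mu}(s)/b_{\lambda}(s)$ of positive real numbers.

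Next, apply $\theta$ to the Pieri identity
\begin{align*}
Q_{\mu} \, g_{1} \;=\; \sum_{\mu \prech \lambda, \ |\lambda|-|\mu|=1} \psi_{\lambda/\mu} \, Q_{\lambda},
\end{align*}
where the sum runs over all partitions $\lambda$ obtained from $\mu$ by adjoining a single box (any such single-box addition automatically yields a horizontal strip). Since $\theta$ is a homomorphism, the left-hand side equals $\theta(Q_{\mu})\,\theta(g_{1}) = 0$ by assumption. On the right-hand side, every coefficient $\psi_{\lambda/\mu}$ is strictly positive by the previous paragraph, and each $\theta(Q_{\lambda})$ is nonnegative by Macdonald-positivity of $\theta$. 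A sum of nonnegative reals which vanishes must have every summand equal to zero, so $\theta(Q_{\lambda}) = 0$ for every $\lambda$ obtained from $\mu$ by adjoining a single box.

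Finally, induct on $n = |\lambda| - |\mu|$. The base case $n=0$ is the hypothesis, and the case $n=1$ was just established. For the inductive step, any $\lambda \supset \mu$ with $|\lambda|-|\mu|=n$ can be reached from $\mu$ by a chain $\mu = \nu^{(0)} \subset \nu^{(1)} \subset \cdots \subset \nu^{(n)} = \lambda$ of single-box additions; applying the $n=1$ argument to $\nu^{(n-1)}$, which satisfies $\theta(Q_{\nu^{(n-1)}}) = 0$ by the inductive hypothesis, yields $\theta(Q_{\lambda}) = 0$. There is no real obstacle here: the whole argument is a clean consequence of the strict positivity of the Pieri coefficients together with the sign structure of $\theta$, and no analytic input beyond $|q|,|t|<1$ is needed.
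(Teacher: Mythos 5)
Your proof is correct and takes essentially the same route as the paper: apply $\theta$ to the Pieri identity $Q_{\mu}g_{1} = \sum_{\lambda} \psi_{\lambda/\mu}Q_{\lambda}$, use strict positivity of $\psi_{\lambda/\mu}$ together with nonnegativity of each $\theta(Q_{\lambda})$ to force every term to vanish, and induct on $|\lambda|-|\mu|$. The only difference is that you spell out the verification that $\psi_{\lambda/\mu}>0$ from the formula for $b_{\lambda}(s)$, which the paper takes as understood.
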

\begin{proof}[Proof of Lemma \ref{lemma:zerovalue}]
By \eqref{Pieri} we can express $0 = \theta(g_{1}Q_{\mu})$ as a sum of non-negative terms $\psi_{\lambda/\mu}\theta(Q_{\lambda})$ over all partitions $\lambda$ that are obtained by adding a box to $\mu$. Since $\psi_{\lambda/\mu} > 0$, we get $\theta(Q_{\lambda}) = 0$ for each such $\lambda$. Proceeding by induction we get $\theta(Q_{\lambda}) = 0$ for any $\lambda \supset \mu$.
\end{proof}
So if $\theta(g_{m}) = 0$ for some $m$, then $\theta(g_{n}) = 0$ for all $n > m$, hence $\Pi(\theta)$ is a polynomial in $z$.  
\begin{lemma}
\label{lemma:psibound}
For fixed $q, t \in (-1; 1)$ there exists such $C > 1$, that $C^{-k} < \psi_{\lambda\slash\mu} < C^{k}$ for any $\mu \prech \lambda$ with $R_{\lambda\slash\mu}-C_{\lambda\slash\mu}$ having at most $k$ rows and $C^{-k} < \phi'_{\lambda\slash\mu} < C^{k}$ for any $\mu \precv \lambda$ with $R_{\lambda\slash\mu}$ having at most $k$ rows. 
\end{lemma}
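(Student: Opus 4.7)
The plan is to establish a per-row bound: for each row $i$ of $\lambda$ that contributes a box to $R_{\lambda/\mu} - C_{\lambda/\mu}$, the partial product $\Pi_i := \prod_{j : (i,j) \in R-C} b_\mu(i,j)/b_\lambda(i,j)$ will be shown to lie in some fixed interval $[B^{-1}, B]$ depending only on $q, t$. Since at most $k$ rows contribute, multiplying gives $\psi_{\lambda/\mu} \in [B^{-k}, B^k]$, and the first claim follows with $C := B$; the second claim will follow by the same row-by-row scheme adapted to vertical strips.

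The first ingredient is a uniform bound on individual factors. Writing $\eta := \max(|q|, |t|) < 1$, every factor $1 - q^m t^n$ with $m, n \geq 0$ and $m + n \geq 1$ satisfies $|q^m t^n| \leq \eta$ and hence lies in $[1-\eta, 1+\eta]$. Consequently each ratio $b_\mu(s)/b_\lambda(s)$ is automatically confined to $[K^{-1}, K]$ with $K := ((1+\eta)/(1-\eta))^2$, independently of $s, \lambda, \mu$. The second ingredient, which is needed to control potentially long rows, is a decay estimate: for $s = (i,j) \in R - C$, the condition $s \notin C$ forces $\mu'_j = \lambda'_j$ and hence $l_\mu(s) = l_\lambda(s)$, so a direct expansion of $b_\mu(s) - b_\lambda(s)$ yields
\[
\left| \frac{b_\mu(s)}{b_\lambda(s)} - 1 \right| \leq D \cdot \eta^{a_\mu(s) + l_\mu(s)}
\]
for some $D = D(q, t)$. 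Recalling that the hook-lengths $h_\mu(s) = a_\mu(s) + l_\mu(s) + 1$ in a fixed row of $\mu$ are strictly decreasing from left to right and hence form distinct positive integers, one concludes
\[
\sum_{j : (i,j) \in \mu} \eta^{h_\mu(i,j) - 1} \leq \sum_{h \geq 1} \eta^{h-1} = \frac{1}{1 - \eta}.
\]

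To finish the per-row bound I will pick $N = N(q, t)$ large enough that $D \eta^N/(1-\eta) < 1/2$ and split $J_i := \{j : (i,j) \in R - C\}$ into a \emph{near} subset ($h_\mu(s) \leq N$) and a \emph{far} subset ($h_\mu(s) > N$). The near subset has at most $N$ members by the distinct-hooks property, and therefore contributes a product in $[K^{-N}, K^N]$ from the first step. The far subset contributes $\prod (1 + \epsilon_j)$ with $\sum |\epsilon_j| \leq D \eta^N/(1-\eta) < 1/2$, so this product lies in $[1/2, 3/2]$. Multiplying, $\Pi_i \in [B^{-1}, B]$ with $B := 2 K^N$, as required.

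The argument for $\phi'_{\lambda/\mu}$ is the same scheme adapted to vertical strips: the product runs over all $s \in R_{\lambda/\mu}$, each of the at most $k$ rows $i \in R$ contains exactly one strip box $(i, \mu_i + 1) \in \lambda \setminus \mu$ contributing an extra factor $1/b_\lambda \in [K^{-1}, K]$, and for the remaining boxes $(i,j) \in \mu$ in a strip row one has $a_\lambda = a_\mu + 1$ and $l_\lambda = l_\mu + \delta_j$ with $\delta_j \geq 0$, for which the analogous decay estimate $|b_\mu/b_\lambda - 1| \leq D' \cdot \eta^{a_\mu + l_\mu}$ goes through by the same type of direct expansion. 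The main obstacle I anticipate is setting up the two decay estimates in enough generality that the hook-length exponent $\eta^{a_\mu + l_\mu}$ emerges uniformly in both the $\psi$ and $\phi'$ situations, but once this bookkeeping is done everything else is routine.
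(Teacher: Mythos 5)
Your argument is correct and reaches the conclusion, but it is noticeably more elaborate than the paper's. The paper observes that for a fixed row $i$, each of the four families — numerators of $b_{\mu}(s)$, denominators of $b_{\mu}(s)$, numerators of $b_{\lambda}(s)$, denominators of $b_{\lambda}(s)$ — has the form $\prod_j \bigl(1-q^{a_j}t^{b_j}\bigr)$ where the $q$-exponents $a_j$ run over distinct non-negative integers (the arm lengths) and where an extra factor of $\eta := \max\{|q|,|t|\}$ is always available because one of the two exponents is $\geq 1$; each such product is therefore squeezed between $(\eta; |q|)_{\infty}$ and $(-\eta; |q|)_{\infty}$, and $C$ can be taken to be $(-\eta;|q|)_\infty^2/(\eta;|q|)_\infty^2$. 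This gives a per-row bound in one line, with no need to split into near/far boxes, to exploit cancellation in $b_{\mu}(s)-b_{\lambda}(s)$, or to separately estimate the deviation from $1$. Your route — a pointwise bound from $|q^mt^n|\leq\eta$, a decay estimate $|b_\mu(s)/b_\lambda(s)-1|\lesssim \eta^{h_\mu(s)-1}$ via the algebraic cancellation forced by $s\notin C_{\lambda/\mu}$, the observation that hook lengths are distinct in a row, and a near/far decomposition — is a different (and sound) way of packaging the same underlying geometric fact, namely the strict monotonicity of arm/hook lengths along a row, which is what makes the row products converge uniformly. The paper's version buys brevity; your version buys a more quantitative picture of which boxes actually contribute. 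One tiny slip that doesn't affect anything: with $\sum|\epsilon_j|<1/2$ the far product is bounded above by $e^{1/2}\approx 1.65$ rather than $3/2$, but since you are only producing some absolute constant $B(q,t)$ this is immaterial. You also handle the $\phi'$ case in a parallel way; the paper simply says "a similar argument holds," and your sketch of how the decay estimate adapts (using $a_\lambda=a_\mu+1$, $l_\lambda\geq l_\mu$ in a strip row, plus one extra factor from the added box) is consistent with that.
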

\begin{proof}[Proof of Lemma \ref{lemma:psibound}]
By \eqref{PsiPhi} $\psi_{\lambda\slash\mu}$ is a product of a most $k$ terms, each of which is a product of expressions $b_{\mu}(s)/b_{\lambda}(s)$ over a subset of boxes of a single row of $\lambda$. The product of numerators of  
$b_{\mu}(s)$ over a subset of boxes of a single row can be bounded from below by $(\max\{|q|, |t|\}; |q|)_{\infty}$ and from above by $(-\max\{|q|, |t|\}; |q|)_{\infty}$. Same holds for the product of denominators of  
$b_{\mu}(s)$, the product of numerators of  
$b_{\lambda}(s)$, and the product of denominators of  $b_{\lambda}(s)$. Hence for this case we could take any $C > (-\max\{|q|, |t|\}; |q|)_{\infty}^{2}/(\max\{|q|, |t|\}; |q|)_{\infty}^{2}$. A similar argument holds for $\phi'_{\lambda\slash\mu}$.
\end{proof}
We reserve $C$ to denote this constant for the rest of the proof.
\begin{lemma}
\label{lemma:limit}
If $\theta$ is $(q, t)$-Macdonald-positive and $\theta(g_{n}) > 0$ for all $n$, then either 
\begin{enumerate}
\item
$\displaystyle \lim_{n\to \infty} \theta(g_{n+1})/\theta(g_{n})$ exists and is finite and positive, or
\item
There exists such $s \in \mathbb{Z}_{>0}$, that $\displaystyle \lim_{n\to \infty} \theta(g_{n+s})/\theta(g_{n}) = 0$.
\end{enumerate}
\end{lemma}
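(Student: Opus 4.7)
The plan is to study the ratios $a_n := \theta(g_{n+1})/\theta(g_n)$ (writing $b_n := \theta(g_n) > 0$) and relate them to the exponential growth rate $\rho := \lim_n b_n^{1/n}$. First I will check that $a_n$ is uniformly bounded and that $\rho$ exists in $[0,\infty)$. Applying the Pieri formula \eqref{Pieri} with $\mu=(n)$ and $r=1$ gives
\[
g_n g_1 = \psi_{(n+1)/(n)}\, g_{n+1} + \psi_{(n,1)/(n)}\, Q_{(n,1)},
\]
and a direct computation from \eqref{PsiPhi} shows that $\psi_{(n,1)/(n)} = 1$ (empty index set) together with the telescoping identity $\psi_{(n+1)/(n)} = (1-t)(1-q^{n+1})/[(1-q)(1-q^n t)] \to (1-t)/(1-q) > 0$. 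Applying $\theta$ and using $\theta(Q_{(n,1)}) \geq 0$ gives $a_n \leq b_1/\psi_{(n+1)/(n)}$, a uniform bound. Analogously, Pieri with $\mu=(m)$, $r=n$ produces $b_m b_n \geq \psi_{(m+n)/(m)} b_{m+n}$ with $\psi_{(m+n)/(m)} \geq c > 0$ uniformly in $m,n$, so Fekete's lemma applied to $-\log(c b_n)$ yields the existence of $\rho$.

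The dichotomy should now follow from the value of $\rho$. If $\rho > 0$, I aim to show $\lim a_n = \rho$, which is alternative (1). Since the geometric mean $(\prod_{k<n} a_k)^{1/n} = b_n^{1/n}$ already converges to $\rho$, the task reduces to ruling out two distinct positive accumulation points of $(a_n)$. Suppose for contradiction that $\liminf a_n = \ell < L = \limsup a_n$ with $\ell > 0$. I will use the full Pieri expansion
\[
b_m b_n = \sum_{k \geq 0} \psi_{(m+n-k,k)/(m)}\, \theta(Q_{(m+n-k,k)}),
\]
together with non-negativity of each summand, evaluated on pairs $(m,n)$ chosen so that the local behaviour of $a$ is alternately near $\ell$ and near $L$. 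Because the Pieri coefficients $\psi_{(m+n-k,k)/(m)}$ stabilize as $m,n \to \infty$, the non-negativity constraints should propagate ``near-$L$'' ratios to adjacent positions, forcing the collapse $\ell = L$. If instead $\rho = 0$, I will argue that alternative (2) holds: the bound $b_{n+s}/b_n \leq b_s/\psi_{(n+s)/(n)}$ from Pieri gives $\limsup_n b_{n+s}/b_n \leq b_s/c_s$, where $c_s > 0$ is the limit of the stabilizing coefficient, and since $\rho = 0$ forces $b_s \to 0$, choosing $s$ large makes this $\limsup$ arbitrarily small; a dual argument then also pins down the $\liminf$ to yield $\lim_n b_{n+s}/b_n = 0$ for a suitable $s$.

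The main obstacle is the one-sided nature of the Pieri inequalities: they yield upper bounds such as $b_{n+1} \leq b_1 b_n/\psi_{(n+1)/(n)}$ but no direct lower bound on $a_n$. The technical core of the argument will be to convert the non-negativity $\theta(Q_\lambda) \geq 0$ for \emph{multi-row} $\lambda$ into a complementary lower bound on $a_n$, and hence into the rigidity that eliminates the oscillatory scenario. This rigidity is what forces the clean dichotomy asserted in the lemma.
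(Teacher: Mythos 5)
Your proposal gets several preliminary steps right: the uniform upper bound on $a_n = \theta(g_{n+1})/\theta(g_n)$ from the single-step Pieri expansion of $g_n g_1$, the existence of $\rho = \lim_n \theta(g_n)^{1/n}$ via the super-multiplicativity $\theta(g_m)\theta(g_n) \geq \psi_{(m+n)/(m)}\theta(g_{m+n}) \geq C^{-1}\theta(g_{m+n})$ and Fekete's lemma, and the correct diagnosis that multi-row values $\theta(Q_\lambda)$ must be brought in to supply lower bounds complementary to the upper bounds from Pieri. However, both branches of your $\rho$-dichotomy have genuine gaps, and the dichotomy itself is not the route the paper takes.

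In the $\rho>0$ branch, the entire content of the lemma is the rigidity you describe as ``the non-negativity constraints should propagate near-$L$ ratios to adjacent positions, forcing the collapse $\ell=L$.'' This is exactly the hard part, and you give no mechanism. The paper's proof supplies a concrete one: assuming both alternatives fail, it extracts $\gamma>1$ with $\frac{\theta(g_{n+1})\theta(g_{k-1})}{\theta(g_n)\theta(g_k)} > \gamma$ for suitable $N<k<n$, then proves the two-row comparison estimate $\theta(Q_{(n_1+s,n_2-s)}) \geq \tfrac{1}{2C^8}\theta(Q_{(n_1,n_2)})$ for large $n_2<n_1$ (the paper's \eqref{inequality0}); this is derived from $\epsilon_s := \limsup_m \theta(g_{m+s})/\theta(g_m) > 0$, i.e. from alternative (2) failing, via two further Pieri-expansion comparisons. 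That estimate is then fed back into the two-row Pieri expansions of $\theta(g_ng_k)$ and $\theta(g_{n+1}g_{k-1})$ to show the small-$\ell$ terms are dominated and the ratio is pinned near $1$, contradicting $\gamma>1$. Your sketch has no analogue of this estimate, and without it there is no argument.

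In the $\rho=0$ branch, the inequality $\limsup_n \theta(g_{n+s})/\theta(g_n) \leq \theta(g_s)/c_s$ (with $c_s>0$ the stabilized Pieri coefficient) only shows this $\limsup$ can be made small by taking $s$ large; it does not produce a single $s$ for which $\lim_n \theta(g_{n+s})/\theta(g_n)=0$, which is what alternative (2) demands. The phrase about ``a dual argument pins down the $\liminf$'' appears to confuse the issue: the $\liminf$ is trivially $\geq 0$; the difficulty is that the $\limsup$ must be exactly $0$ for a fixed $s$, and nothing in your sketch forces it to vanish rather than just be small. The paper treats this regime together with the other one: assuming \emph{both} alternatives fail immediately gives $\epsilon_s>0$ for all $s$ as well as two distinct subsequential limits of $a_n$, and the single contradiction argument handles everything. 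Splitting on $\rho$ therefore adds a genuinely open sub-problem (in the $\rho=0$ branch) without resolving the rigidity needed in the other branch.
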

\begin{proof}[Proof of Lemma \ref{lemma:limit}]
Suppose that neither of the two stated properties holds.
By \eqref{Pieri} we have 
\begin{align}
\label{TwoRowSinglePieri}
\theta(g_{n}g_{1}) = \frac{(1-t)\left(1-q^{n+1}\right)}{(1-q)\left(1-tq^{n}\right)}\theta(g_{n+1}) + \theta(Q_{(n, 1)}),
\end{align} hence we have $0 < \theta(g_{n+1})/\theta(g_{n}) \leq \frac{\theta(g_{1})(1-q)(1+|tq|))}{(1-t)(1-q^{2})}$, so the sequence $\{\theta(g_{n+1})/\theta(g_{n})\}_{n=1}^{\infty}$ is bounded. By our assumption it has at least two subsequential limit points. Then there exists some $\gamma > 1$, such that for any $N$ one can choose $N < k < n$, so that $\frac{\theta(g_{n+1})\theta(g_{k-1})}{\theta(g_{n})\theta(g_{k})} > \gamma$. By \eqref{Pieri} we have 
\begin{align}
\label{TwoRowPieri1}
\theta\left(g_{n}g_{k}\right) = \theta\left(Q_{(n, k)}\right) + \sum_{\ell=1}^{k} \frac{(t; q)_{n-k+\ell}\left(q^{\ell+1}; q\right)_{n-k+\ell}}{(q; q)_{n-k+\ell}\left(tq^{\ell}; q\right)_{n-k+\ell}}\theta\left(Q_{(n+\ell, k-\ell)}\right), \\
\label{TwoRowPieri2}
\theta(g_{n+1}g_{k-1}) = \sum_{\ell=1}^{k} \frac{(t; q)_{n-k+\ell+1}\left(q^{\ell}; q\right)_{n-k+\ell+1}}{(q; q)_{n-k+\ell+1}\left(tq^{\ell-1}; q\right)_{n-k+\ell+1}}\theta \left(Q_{(n+\ell, k-\ell)}\right),
\end{align}
where both sums on the right hand side are comprised of non-negative terms. Note that 
\begin{align}
\label{coefficientratio}
\frac{\text{Coefficient of } \theta\left(Q_{(n+\ell, k-\ell)}\right) \text{ in \eqref{TwoRowPieri1}}}{\text{Coefficient of } \theta\left(Q_{(n+\ell, k-\ell)}\right) \text{ in \eqref{TwoRowPieri2}}}=\frac{\left(1-q^{n-k+\ell+1}\right)\left(1-tq^{\ell-1}\right)}{\left(1-tq^{n-k+\ell}\right)\left(1-q^{\ell}\right)}. 
\end{align}
So for $q \geq \max\{t, 0\}$ the right hand side of \eqref{coefficientratio} would always be $\geq 1$, hence \eqref{TwoRowPieri1} would be greater or equal than \eqref{TwoRowPieri2}, so we would obtain a contradiction. However, we also need to find an argument that works for other pairs $(q, t) \in (-1, 1)^{2}$.  To obtain a contradiction it will be enough to show that ratio of \eqref{TwoRowPieri1} and \eqref{TwoRowPieri2} will be close to $1$ for large enough $n$, $k$. The ratio \eqref{coefficientratio} is close to $1$ for large enough $\ell$, so we just need to show that the contribution of the terms with small $\ell$ will be small. We can choose $L$ such that the ratio \eqref{coefficientratio} is greater than $\gamma^{-1/2}$ for all $\ell > L$ and all $n > k \geq 1$. We will now show that for any fixed $s > 0$  
\begin{align}
\label{inequality0}
\theta\left(Q_{(n_{1}+s, n_{2}-s)}\right) \geq \frac{1}{2C^{8}}\theta\left(Q_{(n_{1}, n_{2})}\right)
\end{align}
for all sufficiently large $n_{2} < n_{1}$. It will imply that for some fixed $M$ the  terms in \eqref{TwoRowPieri1} and \eqref{TwoRowPieri2} with $L+M \geq \ell > L$  will be for large enough $n, k$ at least as large, up to a constant, as terms with $\ell \leq L$. This will allow us to obtain a contradiction.

\smallskip

Let $\displaystyle \epsilon_{s}:= \limsup_{m\to \infty} \theta(g_{m+s})/\theta(g_{m})$. By our assumption $\epsilon_{s}>0$ for any $s \in \mathbb{Z}_{>0}$. Note that $\displaystyle \epsilon_{s}^{-1} = \liminf_{m\to \infty} \theta(g_{m})/\theta(g_{m+s})$. By Lemma \ref{lemma:limit} we have $C^{-2} < \psi_{\lambda/\mu} < C^{2}$ for any partitions $\mu \prech \lambda$, $\ell(\mu) \leq 2$. 
Consider $\theta\left(Q_{(n_{1}+s, n_{2}-s)}g_{m+s}\right)$ and $\theta\left(Q_{(n_{1}+s, n_{2})}g_{m}\right)$ and with the help of $\eqref{Pieri}$ expand both expressions as linear combinations of $\theta(Q_{\lambda})$ with non-negative coefficients. For $m \leq n_{2}-s$ all $\theta(Q_{\lambda})$ that appear in the expansion of $\theta\left(Q_{(n_{1}+s, n_{2})}g_{m}\right)$ also appear in the expansion of $\theta\left(Q_{(n_{1}+s, n_{2}-s)}g_{m+s}\right)$. Indeed, if  $\theta(Q_{\lambda})$ appears in the expansion of $\theta\left(Q_{(n_{1}+s, n_{2})}g_{m}\right)$, then $(n_{1}+s, n_{2}) \prech \lambda$, $\ell(\lambda) \leq 3$ and $\lambda_{3} \leq n_{2}-s$, so $(n_{1}+s, n_{2}-s) \prech \lambda$. Hence $\theta\left(Q_{(n_{1}+s, n_{2}-s)}g_{m+s}\right) \geq C^{-4}\theta\left(Q_{(n_{1}+s, n_{2})}g_{m}\right)$, so
\begin{align}
\label{inequality1}
\theta\left(Q_{(n_{1}+s, n_{2}-s)}\right) \geq C^{-4}\max_{1 \leq m \leq n_{2}-s}\left \{\frac{\theta(g_{m})}{\theta(g_{m+s})} \right \}\theta\left(Q_{(n_{1}+s, n_{2})}\right) \geq \frac{1}{2} C^{-4} \epsilon_{s}^{-1}\theta\left(Q_{(n_{1}+s, n_{2})}\right) 
\end{align}
for all large enough $n_2$. Now consider $\theta\left(Q_{(n_{1}+s, n_{2})}g_{p}\right)$ and $\theta\left(Q_{(n_{1}, n_{2})}g_{p+s}\right)$ and with the help of $\eqref{Pieri}$ expand both expressions as linear combinations of $\theta(Q_{\lambda})$ with non-negative coefficients. For $p \geq n_{1}$ all $\theta(Q_{\lambda})$ that appear in the expansion of $\theta\left(Q_{(n_{1}, n_{2})}g_{p+s}\right)$ also appear in the expansion of $\theta\left(Q_{(n_{1}+s, n_{2})}g_{p}\right)$. Indeed, if $\theta(Q_{\lambda})$ appears in the expansion of $\theta\left(Q_{(n_{1}, n_{2})}g_{p+s}\right)$, then $(n_{1}, n_{2}) \prech \lambda$, $\ell(\lambda) \leq 3$ and $\lambda_{1} = n_{1} + (p+s) - (\lambda_{2} - n_{2}) - \lambda_{3} \geq p+s \geq n_{1}+s$, so $(n_{1}+s, n_{2}) \prech \lambda$. Hence $\theta\left(Q_{(n_{1}+s, n_{2})}g_{p}\right) \geq C^{-4}\theta\left(Q_{(n_{1}, n_{2})}g_{p+s}\right)$, so
\begin{align}
\label{inequality2}
\theta\left(Q_{(n_{1}+s, n_{2})}\right) \geq C^{-4}\sup_{p \geq n_{1}}\left \{\frac{\theta(g_{p+s})}{\theta(g_{p})} \right \}\theta\left(Q_{(n_{1}, n_{2})}\right) \geq C^{-4}\epsilon_{s}\theta\left(Q_{(n_{1}, n_{2})}\right).
\end{align}
\eqref{inequality0} follows by combining \eqref{inequality1} and \eqref{inequality2}. 

\smallskip

Take $M >2LC^{12}\left(\gamma^{1/2}-1\right)^{-1}$. By applying inequality \eqref{inequality0} with $s = L, 2L, \ldots, \lfloor M/L \rfloor L$ together with Lemma \ref{lemma:psibound} we get that for sufficiently large $k <n$
\begin{multline}
\label{smallvsbigterms}
\sum_{\ell=1}^{L} \psi_{(n+\ell, k-\ell)/(n+1)}\theta\left(Q_{(n+\ell, k-\ell)}\right) \leq \left(\gamma^{1/2}-1\right)\sum_{\ell=L+1}^{L+M} \psi_{(n+\ell, k-\ell)/(n+1)}\theta\left(Q_{(n+\ell, k-\ell)}\right), \text{ hence} \\
\sum_{\ell=1}^{L} \psi_{(n+\ell, k-\ell)/(n+1)}\theta\left(Q_{(n+\ell, k-\ell)}\right) \leq \left(1 -\gamma^{-1/2} \right)\sum_{\ell=1}^{k} \psi_{(n+\ell, k-\ell)/(n+1)}\theta\left(Q_{(n+\ell, k-\ell)}\right).
\end{multline} 
By combining \eqref{TwoRowPieri1}, \eqref{TwoRowPieri2}, \eqref{smallvsbigterms} and the way that $L$ was chosen, we get that for sufficiently large $k <n$
\begin{multline*}
\theta\left(g_{n}g_{k}\right) \geq \sum_{\ell=L+1}^{k} \frac{(t; q)_{n-k+\ell}\left(q^{\ell+1}; q\right)_{n-k+\ell}}{(q; q)_{n-k+\ell}\left(tq^{\ell}; q\right)_{n-k+\ell}}\theta\left(Q_{(n+\ell, k-\ell)}  \right) \\ \geq \gamma^{-1/2} \sum_{\ell=L+1}^{k} \frac{(t; q)_{n-k+\ell+1}\left(q^{\ell}; q\right)_{n-k+\ell+1}}{(q; q)_{n-k+\ell+1}\left(tq^{\ell-1}; q\right)_{n-k+\ell+1}}\theta \left(Q_{(n+\ell, k-\ell)}\right) \\ \geq \gamma^{-1} \sum_{\ell=1}^{k} \frac{(t; q)_{n-k+\ell+1}\left(q^{\ell}; q\right)_{n-k+\ell+1}}{(q; q)_{n-k+\ell+1}\left(tq^{\ell-1}; q\right)_{n-k+\ell+1}}\theta \left(Q_{(n+\ell, k-\ell)}\right) = \gamma^{-1} \theta\left(g_{n+1}g_{k-1}\right).
\end{multline*}
Contradiction.
\end{proof}

If for some $s \in \mathbb{Z}_{>0}$ we have $\displaystyle \lim_{n\to \infty} \theta(g_{n+s})/\theta(g_{n}) = 0$, then the series $\Pi(\theta)$ converges absolutely for every  $z \in \mathbb{C}$ and so is an entire function of $z$. If $\displaystyle \lim_{n\to \infty} \theta(g_{n+1})/\theta(g_{n}) = \alpha > 0$, then the series $\Pi(\theta)$ converges absolutely on the open disk $D = \{z: |z| < 1/\alpha\}$, and so gives a holomorphic function on $D$. 
To deal with the possibility of singularity at $1/\alpha$ we will need the following "pole removal" lemma.

\begin{lemma}["Pole removal" lemma]
\label{lemma:poleremoval}
If $\theta$ is $(q, t)$-Macdonald-positive and $\displaystyle \lim_{n\to \infty} \theta(g_{n+1})/\theta(g_{n}) = \alpha > 0$, then the specialization $\tilde{\theta}$ defined by the generating series  
\begin{align}
\Pi\left(\tilde{\theta}\right) = \Pi(\theta) (\alpha z; q)_{\infty}/ (t \alpha z; q)_{\infty} 
\end{align}
is also $(q, t)$-Macdonald-positive.
\end{lemma}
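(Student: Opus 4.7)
The plan is to read the defining relation $\Pi(\tilde{\theta}) = \Pi(\theta)(\alpha z;q)_{\infty}/(t\alpha z;q)_{\infty}$ as the union decomposition $\theta = (\tilde{\theta},\tau_{\alpha})$ of specializations, and then to exhibit each value $\tilde{\theta}(Q_{\lambda})$ as the limit of nonnegative ratios of values of $\theta$ on partitions with an arbitrarily long prepended first row.

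Rewriting the identity as $\Pi(\theta) = \Pi(\tilde{\theta})\Pi(\tau_{\alpha})$ and comparing with the definition of the union of specializations gives $\theta = (\tilde{\theta},\tau_{\alpha})$. Since $\tau_{\alpha}(Q_{\nu}) = 0$ unless $\nu$ is a single-row partition $(k)$, with $\tau_{\alpha}(g_{k}) = (t;q)_{k}\alpha^{k}/(q;q)_{k}$ (Section~\ref{sec:Specializations}), the skew union formula \eqref{SpecUnion} specializes for every partition $\mu$ to
\begin{equation*}
\theta(Q_{\mu}) \;=\; \sum_{k=0}^{\mu_{1}} \tilde{\theta}(Q_{\mu/(k)})\,\tau_{\alpha}(g_{k}).
\end{equation*}

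Now fix a partition $\lambda$ and apply this identity with $\mu = (n,\lambda) := (n,\lambda_{1},\lambda_{2},\ldots)$ for $n \geq \lambda_{1}$. For $\lambda_{1} \leq k \leq n$ the skew diagram $(n,\lambda)/(k)$ splits into a detached row-$1$ segment in columns $k+1,\ldots,n$ and the subdiagram $\lambda$ sitting in rows $2$ onward. Writing $\phi(T)$ out from the tableau formula \eqref{tableausum}, every box $s=(i,j)$ in the lower component sees a uniform $+1$ contribution to $(T_{i})'_{j}$ coming from the first row; this induces a uniform $t$-shift that cancels between numerator and denominator of every ratio $b_{T_{i}}(s)/b_{T_{i-1}}(s)$ entering $\phi(T)$. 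Consequently
\begin{equation*}
Q_{(n,\lambda)/(k)} \;=\; g_{n-k}\cdot Q_{\lambda} \qquad (\lambda_{1} \leq k \leq n),
\end{equation*}
and since $\tilde{\theta}$ is a homomorphism, $\tilde{\theta}(Q_{(n,\lambda)/(k)}) = \tilde{\theta}(g_{n-k})\,\tilde{\theta}(Q_{\lambda})$ on this range. Substituting and using the Cauchy product $\theta(g_{n}) = \sum_{k=0}^{n}\tilde{\theta}(g_{n-k})\tau_{\alpha}(g_{k})$ produces
\begin{equation*}
\theta(Q_{(n,\lambda)}) \;=\; \tilde{\theta}(Q_{\lambda})\,\theta(g_{n}) \;+\; \sum_{k=0}^{\lambda_{1}-1}\bigl[\tilde{\theta}(Q_{(n,\lambda)/(k)}) - \tilde{\theta}(Q_{\lambda})\,\tilde{\theta}(g_{n-k})\bigr]\tau_{\alpha}(g_{k}),
\end{equation*}
where the correction sum runs over a fixed, $n$-independent set of indices $k<\lambda_{1}$. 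Dividing by $\theta(g_{n})$ (eventually positive under our hypothesis; the case of eventual vanishing is handled by Lemma \ref{lemma:zerovalue}) and passing to the limit, the goal is
\begin{equation*}
\tilde{\theta}(Q_{\lambda}) \;=\; \lim_{n\to\infty}\frac{\theta(Q_{(n,\lambda)})}{\theta(g_{n})} \;\geq\; 0,
\end{equation*}
the nonnegativity being inherited from the $(q,t)$-Macdonald positivity of $\theta$.

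The hard step is to show the correction sum is $o(\theta(g_{n}))$ as $n\to\infty$. For $k<\lambda_{1}$ the skew $(n,\lambda)/(k)$ is connected, so no factorization of the form above is available, and a priori $\tilde{\theta}(Q_{(n,\lambda)/(k)})$ and $\tilde{\theta}(g_{n-k})$ can each be of either sign. The plan is to combine Lemma \ref{lemma:limit} (which prevents wild oscillation of $\theta(g_{n})$ and produces usable growth rates) with Pieri-type coefficient estimates in the spirit of Lemma \ref{lemma:psibound}, and to expand the connected skew functions $Q_{(n,\lambda)/(k)}$ by iterated applications of the Pieri formulas \eqref{Pieri} so as to compare the correction terms inductively to the leading term $\tilde{\theta}(Q_{\lambda})\,\theta(g_{n})$. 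This Pieri-plus-asymptotics bookkeeping is where the bulk of the technical work of the lemma is concentrated.
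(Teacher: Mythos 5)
The proposal correctly identifies the target of the argument --- exhibiting $\tilde{\theta}(Q_{\lambda}) = \lim_{n\to\infty}\theta(Q_{(n,\lambda)})/\theta(g_{n})$, which is precisely the paper's key formula \eqref{MainLimit} --- and the union decomposition $\theta = (\tilde{\theta},\tau_{\alpha})$ is the same decomposition that underlies the paper's homomorphism $F$. However, your route to the limit formula has a concrete error. The claimed factorization $Q_{(n,\lambda)/(k)} = g_{n-k}\cdot Q_{\lambda}$ for $k \geq \lambda_{1}$ is false for Macdonald polynomials (it is true only in the Schur case $q=t$). The heuristic that the constant $+1$ shift to the column lengths ``cancels'' overlooks the boxes $(1,c)$ with $c \leq \lambda_{1}$: these lie inside $(k)$ but in columns $C_{T_{i}/T_{i-1}}$ touched by the lower component, so the factors $b_{T_{i}}(1,c)/b_{T_{i-1}}(1,c)$ do appear in $\phi(T)$, and they depend on the row-$1$ length $(T_{i})_{1}$, which varies with the row-$1$ filling. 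A direct check with $\lambda=(3,1)$ and $\mu=(2)$ confirms this: at $q=1/2$, $t=0$, the coefficients of $x_{1}^{2}$ in $Q_{(3,1)/(2)}$ and in $g_{1}^{2}$ disagree by a factor $\tfrac{(1-q^{2}t^{2})(1-q^{2})}{(1-q^{3}t)(1-qt)}\ne 1$, and the coefficient of $x_{1}x_{2}$ disagrees by a \emph{different} factor, so $Q_{(3,1)/(2)}$ is not even a scalar multiple of $g_{1}^{2}$. So the intermediate identity you build the argument on does not hold.

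Separately, even granting the factorization, the proposal does not actually prove the crucial inequality: you correctly isolate the correction sum $\sum_{k<\lambda_{1}}\bigl[\tilde{\theta}(Q_{(n,\lambda)/(k)})-\tilde{\theta}(Q_{\lambda})\tilde{\theta}(g_{n-k})\bigr]\tau_{\alpha}(g_{k})$ and note it must be $o(\theta(g_{n}))$, but you defer precisely this estimate to ``Pieri-plus-asymptotics bookkeeping,'' which is where the actual difficulty lives. The terms in this sum involve $\tilde{\theta}$ itself evaluated on connected skew Macdonald functions and on $g_{n-k}$, and $\tilde{\theta}$ is exactly the object whose sign we are trying to control; there is no a priori bound on those quantities. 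The paper's proof avoids this circularity by working directly with transition matrices between the $\{g_{\mu}\}$ and $\{Q_{\nu}\}$ bases: it proves the convergence $AM(N)A^{-1}\to[F]_{\{g_{\nu}\},\{Q_{\nu}\}}$ via an explicit row-deletion bijection on tableaux and the observation that the ``interference'' terms $\Pi_{1},\Pi_{2}$ (corresponding to exactly the boxes that break your factorization) tend to $1$ as $N\to\infty$. So your claimed factorization fails, but the \emph{asymptotic} version of it can be salvaged --- at the cost of the nontrivial matrix argument that is the heart of the paper's proof and that your proposal leaves out.
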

\begin{proof}[Proof of Lemma \ref{lemma:poleremoval}]
We need to show that $\tilde{\theta}(Q_{\lambda}) \geq 0$ for any partition $\lambda$. We will do it by showing that
\begin{align}
\label{MainLimit}
\tilde{\theta}(Q_{\lambda}) = \lim_{N \to \infty} \frac{\theta\left(Q_{(N) \cup \lambda}\right)}{\theta(g_{N})}, 
\end{align}
where $(N) \cup \lambda$ is the partition obtained by prepending to $\lambda$ a first row of length $N$ (for sufficiently large $N$). Fix $\lambda$. Let $F: \Lambda \to \Lambda$ be a homomorphism defined by replacing $x_{1}$ with $\alpha$, and $x_{i}$ with $x_{i-1}$ for each $i \geq 2$. Note that $F(p_{n}) = \alpha^{n}+p_{n}$, so $F$ is invertible. We will show that both sides of \eqref{MainLimit} are equal to $\theta(F^{-1}(Q_{\lambda}))$.
By \eqref{GeneratingCoefficients} we have $ \displaystyle F(g_{n}) = \sum_{k=0}^{n} \frac{(t; q)_{k}}{(q; q)_{k}}\alpha^{k} g_{n-k}$, so by \eqref{qGauss} we get
\begin{align}
\sum_{n=0}^{\infty} F(g_{n})z^{n} = \left( \sum_{n=0}^{\infty}g_{n}z^{n}\right)\frac{(t\alpha z; q)_{\infty}}{(\alpha z; q)_{\infty}}.
\end{align}
Hence after multiplying both sides by $(\alpha z; q)_{\infty}/(t \alpha z; q)_{\infty}$ and applying $\theta \circ F^{-1}$ we get 
\begin{align}
\left( \sum_{n=0}^{\infty}\theta(g_{n})z^{n}\right)\frac{(\alpha z; q)_{\infty}}{(t\alpha z; q)_{\infty}} = \sum_{n=0}^{\infty} \theta(F^{-1}(g_{n}))z^{n},
\end{align}
so $\tilde{\theta} = \theta \circ F^{-1}$, hence  $\tilde{\theta}(Q_{\lambda}) =\theta(F^{-1}(Q_{\lambda}))$.

For partitions $\nu, \mu$ of the same size denote by $\mathcal{S}(\nu, \mu)$ the set of all semistandard tableaux of shape $\nu$ and content $\mu$. Let
\begin{align}
c(\nu, \mu):= \sum_{T \in \mathcal{S}(\nu, \mu)} \psi(T).
\end{align}
If $c(\nu, \mu) \neq 0$, then $\nu$ {\it dominates} $\mu$, i.e.  $\nu_{1} + \cdots + \nu_{i} \geq  \mu_{1} + \cdots + \mu_{i}$ for all $i \geq 1$. These inequalities must hold, since all entries $\leq i$ in any semistandard tableau must be located in the first $i$ rows of this tableau. Also $c(\nu, \nu)=1$, since there is only one tableau $T$ (with $\psi(T)=1$) of both shape $\nu$ and content $\nu$: one in which all entries in row $i$ are equal to $i$ for any $i \geq 1$. By a repeated application of  \eqref{Pieri} we get
\begin{align}
g_{\mu} = \sum_{\nu} c(\nu, \mu)Q_{\nu}, 
\end{align}
Take a linear order on the set of all partitions, such that $\nu < \mu$, if either $|\nu| < |\mu|$, or $|\nu| = |\mu|$ and $\nu$ is greater than $\mu$ in the lexicographic order. Then define (with respect to this order) an infinite matrix $M$ by setting $M_{\nu, \mu} := c(\nu, \mu)$. Matrix $M$ is upper unitriangular, since if $\nu$ dominates $\mu$, then $\nu$ is greater or equal than $\mu$ in the lexicographic order. So $M$ is invertible and  $M^{-1}$ is also upper unitriangular. For $N > |\lambda|$ denote by $\Delta(N)$ the ordered set of all partitions of size $N+|\lambda|$ with the first row of length $\geq N$ (again, with respect to the considered order). Denote by $\Delta$ the ordered set of partitions of size $\leq |\lambda|$ (with respect to the considered order). Let $M(N)$ be a finite square submatrix of $M$ with rows and columns corresponding to $\Delta(N)$. Since we have a natural order preserving bijection $\Delta(N) \to \Delta$ (deletion of the first row), we can slightly abuse notation by working with each $M(N)$ as with a $\Delta \times \Delta$ matrix. Let $A$ be a $\Delta \times \Delta$ diagonal matrix with $A_{\nu, \mu} := \mathsf{1}_{\nu=\mu} \cdot \alpha^{-|\nu|}$. 
For any linear operator $T: V \to V$ and any finite linearly independent sets of vectors $\mathcal{B}, \mathcal{D}$ in a vector space $V$, such that $T(span(\mathcal{B})) \subset span(\mathcal{D})$, denote by $[T]_{\mathcal{B}, \mathcal{D}}$ the matrix of $T \vert_{span(\mathcal{B})}$ with respect to bases $\mathcal{B}$ of the domain and $\mathcal{D}$ of the codomain. We will first show that
\begin{align}
\label{matrixlimit}
\lim_{N \to \infty} AM(N)A^{-1} = \left[F\right]_{\{g_{\nu}: \ \nu \in \Delta\}, \{Q_{\nu}:\ \nu \in \Delta \}}
\end{align}
Indeed, for $\nu, \mu \in \Delta$ we have 
\begin{align}
\left(AM(N)A^{-1}\right)_{\nu, \mu} = \alpha^{|\mu|-|\nu|}\sum_{T \in \mathcal{S}(N)}\psi(T), \quad \text{where} \ \mathcal{S}(N) := \mathcal{S}((N+|\lambda|-|\nu|) \cup \nu, (N+|\lambda|-|\mu|) \cup \mu).
\end{align} 
Note that $\mathcal{S}(N)$ is empty (and the corresponding matrix element is $0$) unless $(N+|\lambda|-|\nu|) \cup \nu$ dominates $(N+|\lambda|-|\mu|) \cup \mu$ (and, in particular, $|\nu| \leq |\mu|$). For a semistandard tableau $T$ denote by $T^{-}$ the tableau obtained from $T$ by deleting the first row and replacing each entry $i$ by $i-1$. Denote by $T^{1}$ the (row) tableau formed by the first row of $T$.  Denote by $T^{1-}$ the (row) tableau obtained from $T^1$ by deleting all boxes with entries equal to $1$ and replacing each entry $i \geq 2$ by $i-1$. See Figure \ref{fig:longrow} for an example of $T$, $T^{-}$, $T^{1-}$.

\begin{figure}[h]

\includegraphics[width = 
0.7\textwidth]{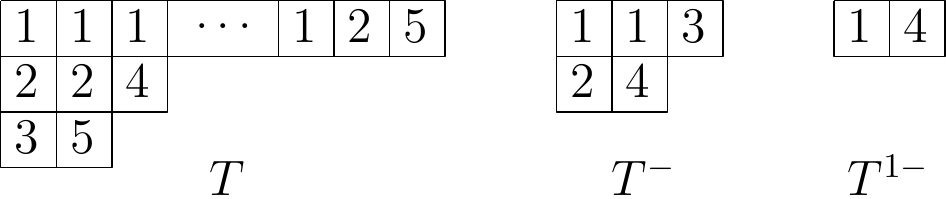}
\caption{An example of $T$, $T^{-}$, $T^{1-}$. Here 
$S_{1} = \{(1^{\text{st}} \text{ box},\ 2), \ (1^{\text{st}} \text{ box},\ 3), \ (2^{\text{nd}} \text{ box},\ 2), \ (2^{\text{nd}} \text{ box},\ 5), \ 
(3^{\text{rd}} \text{ box},\ 4) \}
$,
$
S_{2} = \{(1^{\text{st}} \text{ box},\ 4), \ (1^{\text{st}} \text{ box},\ 5), \ (2^{\text{nd}} \text{ box},\ 3), \ (2^{\text{nd}} \text{ box},\ 4), \ 
(3^{\text{rd}} \text{ box},\ 2), \
(3^{\text{rd}} \text{ box},\ 3), \
(3^{\text{rd}} \text{ box},\ 5) \}
$.
}
\label{fig:longrow}
\end{figure}
Denote by $\widetilde{\mathcal{S}}$ the set of pairs of semistandard tableaux $(T_{1}, T_{2})$ of 
shape $\nu$ and $(|\mu|-|\nu|)$ respectively  and with total content $\mu$. The map $T \to \left(T^{-}, T^{1-}\right)$ provides a bijection $\mathcal{S}(N) \to \widetilde{\mathcal{S}}$ for all large enough $N$.
Denote by $S_1$ the set  all such pairs $(s, i)$, that $s$ is a box in the first row of $T$ with entry $1$, and $i \geq 2$ is an entry in the column of $s$. Denote by $S_2$ the set  all such pairs $(s, i)$, that $s$ is a box in the first row of $T$ with entry $1$, and $i \geq 2$ is not an entry in the column of $s$, while the length of this column is greater or equal than $2$ and $i$ appears somewhere in $T$. 
Note that by \eqref{PsiPhi}
\begin{align}
\psi(T) = \psi\left(T^{-}\right)\psi\left(T^{1}\right)\Pi_{1} \Pi_{2}, \quad \text{where } 
\Pi_{1}:=\prod_{(s, i) \in S_{1}} \frac{b_{T^{1}_{i}}(s)}{b_{T^{1}_{i-1}}(s)}, \quad \Pi_{2} = \prod_{(s, i) \in S_{2}} \frac{b_{T^{1}_{i}}(s)b_{T_{i-1}}(s)}{b_{T^{1}_{i-1}}(s)b_{T_{i}}(s)}.
\end{align}
$\Pi_{1}, \Pi_{2} \to 1$ as $N \to \infty$, so 
\begin{multline}
\lim_{N \to \infty} \left(AM(N)A^{-1}\right)_{\nu, \mu} = \alpha^{|\mu|-|\nu|}\lim_{N \to \infty} \sum_{T \in \mathcal{S}(N)} \psi\left(T^{-}\right)\psi\left(T^{1}\right) \\ = \alpha^{|\mu|-|\nu|}\lim_{N \to \infty} \sum_{T \in \mathcal{S}(N)} \psi\left(T^{-}\right)\phi\left(T^{1-}\right) \frac{(q; q)_{N+|\lambda|-|\nu|} (t; q)_{N+|\lambda|-|\mu|}}{(t; q)_{N+|\lambda|-|\nu|}(q; q)_{N+|\lambda|-|\mu|}} \\ = \lim_{N \to \infty} \alpha^{|\mu|-|\nu|}\sum_{T \in \mathcal{S}(N)} \psi\left(T^{-}\right)\phi\left(T^{1-}\right) 
= \alpha^{|\mu|-|\nu|}\sum_{(T_{1}, T_{2}) \in \widetilde{\mathcal{S}}} \psi\left(T_{1}\right)\phi\left(T_{2}\right).
\end{multline}
By \eqref{tableausum} and \eqref{Pieri} it is equal to the coefficient of $\displaystyle \prod_{i \geq 1} x_{i}^{\mu_{i}}$ in  
\begin{align}
\label{matrixproduct}
\alpha^{|\mu|-|\nu|} P_{\nu}g_{|\mu|-|\nu|} = \sum_{\nu \prech \chi, \ |\chi| = |\mu|} \alpha^{|\chi|-|\nu|}\phi_{\chi/\nu}P_{\chi}, \quad
\text{which is} \quad 
\sum_{\nu \prech \chi, \ |\chi| = |\mu|} \alpha^{|\chi|-|\nu|} \phi_{\chi/\nu}c(\chi, \mu).
\end{align}

Note that all $\chi$ in the right hand side sum are contained in $\Delta$, since $|\chi|= |\mu| \leq |\lambda|$. Let $B$ be a $\Delta \times \Delta$ matrix with $B_{\nu, \chi} := 1_{\nu \prech \chi} \cdot \alpha^{|\chi|-|\nu|} \phi_{\chi/\nu}$. For a symmetric polynomial in $x_{1}, \ldots, x_{n}$ replacing $x_{1}$ with $\alpha$ and $x_{i}$ with $x_{i-1}$ for $2 \leq i \leq n$ amounts to the same result as just replacing $x_{n}$ with $\alpha$.  So by \eqref{tableausum} and \eqref{matrixproduct}
\begin{multline*}
B=\left[F\right]_{\{Q_{\nu}: \ \nu \in \Delta\}, \{Q_{\nu}:\ \nu \in \Delta \}}, \quad \text{and} \lim_{N \to \infty} AM(N)A^{-1} \\ = B M \vert_{\Delta \times \Delta} = \left[F\right]_{\{Q_{\nu}: \ \nu \in \Delta\}, \{Q_{\nu}:\ \nu \in \Delta \}}[Id]_{\{g_{\nu}: \ \nu \in \Delta\}, \{Q_{\nu}:\ \nu \in \Delta \}} = [F]_{\{g_{\nu}: \ \nu \in \Delta\}, \{Q_{\nu}:\ \nu \in \Delta \}}.
\end{multline*} 
This proves \eqref{matrixlimit}. Hence
\begin{align}
\lim_{N \to \infty} AM(N)^{-1}A^{-1} = \left[F^{-1}\right]_{\{Q_{\nu}: \ \nu \in \Delta\}, \{g_{\nu}:\ \nu \in \Delta \}}.
\end{align}
So we get
\begin{multline}
\label{limitevaluation}
\lim_{N \to \infty} \frac{\theta\left(Q_{(N) \cup \lambda}\right)}{\theta(g_{N})} =\lim_{N \to \infty} \sum_{\nu \in \Delta} \left(M(N)^{-1}\right)_{\nu, \lambda}\frac{\theta(g_{\nu})\theta(g_{N+|\lambda|-|\nu|})}{\theta(g_{N})} \\ = \lim_{N \to \infty} \sum_{\nu \in \Delta} \left(M(N)^{-1}\right)_{\nu, \lambda}\theta(g_{\nu}) \alpha^{|\lambda|-|\nu|} = \theta \left(\lim_{N \to \infty} \sum_{\nu \in \Delta} \left(AM(N)^{-1}A^{-1}\right)_{\nu, \lambda} g_{\nu} \right) = \theta(F^{-1}(Q_{\lambda})),
\end{multline}
where the second equality follows, since $\displaystyle \lim_{n \to \infty} \frac{\theta(g_{n+s})}{\theta(g_{n})}= \alpha^{s}$ for any $s \in \mathbb{Z}_{>0}$. This finishes the proof of \eqref{MainLimit}.
\end{proof}
Suppose that $\theta$ is a $(q, t)$-Macdonald-positive specialization. By combining Lemma \ref{lemma:limit} and Lemma \ref{lemma:poleremoval} we can define a sequence $\{\theta_{k}\}$ of $(q, t)$-Macdonald-positive specializations, such that: 
\begin{enumerate}
\item
$\theta_{1} = \theta$.

\item
The sequence is either infinite or has the last term $\theta_{N}$, such that $\Pi\left(\theta_{N}\right)$ is an entire function.

\item
$\displaystyle \lim_{n\to \infty} \frac{\theta_{k}(g_{n+1})}{\theta_{k}(g_{n})} = \alpha_{k} > 0$ for $1 \leq k \leq N-1$ (or any $k \geq 1$ if the sequence is infinite).

\item
$\displaystyle \Pi\left(\theta_{k+1}\right) = \frac{\Pi(\theta_{k}) (\alpha_{k} z; q)_{\infty}}{(t \alpha_{k} z; q)_{\infty}}$ for $1 \leq k \leq N-1$ (or any $k \geq 1$ if the sequence is infinite).

\item 
$\alpha_{1} \geq \alpha_{2} \geq \cdots$.
\end{enumerate}
Suppose that the sequence is infinite. Then $0 \leq \theta_{k+1}(g_{1}) = \theta(g_{1}) - \frac{1-t}{1-q}(\alpha_{1} + \cdots + \alpha_{k})$ for any $k\geq1$, so  $\displaystyle \sum_{k=1}^{\infty} \alpha_{k}$ converges. Hence $\displaystyle \Pi(\theta) \cdot \prod_{k=1}^{\infty} \frac{(\alpha_{k} z; q)_{\infty}}{(t \alpha_{k} z; q)_{\infty}}$ is a well defined power series, which gives an entire function. Indeed, it is equal to $\displaystyle \Pi(\theta_{\ell}) \cdot \prod_{k=\ell+1}^{\infty} \frac{(\alpha_{k} z; q)_{\infty}}{(t \alpha_{k} z; q)_{\infty}}$ and the later expression converges for all $z \in \mathbb{C}$, $|z|<1/\alpha_{\ell}$, see Remark \ref{rem:convergence}. Since $\displaystyle \lim_{\ell \to \infty} \alpha_{\ell} = 0$, it in fact converges for all $z$. Then $\displaystyle \Pi(\theta) \cdot \prod_{k=1}^{\infty} \frac{(\alpha_{k} z; q)_{\infty}}{(t \alpha_{k} z; q)_{\infty}} = \Pi(r(\theta))$ for some specialization $r(\theta)$, which is also $(q, t)$-Macdonald -positive, since $\displaystyle r(\theta)(Q_{\lambda}) = \lim_{k \to \infty} \theta_{k}(Q_{\lambda})$ for any partition $\lambda$. In case of a finite sequence set $r(\theta):=\theta_{N}$. It follows, in particular, that $\Pi(\theta)$ can be analytically continued to a meromorphic function. 

\smallskip

Consider then the $(t,q)$-Macdonald-positive specialization $\bar{\theta} = r(w_{t, q}(r(\theta)))$.  Let $\{\alpha_{i}\}_{i=1}^{\infty}$ and $\{\beta_{j}\}_{j=1}^{\infty}$ be sequences of non-negative numbers with $\displaystyle \sum_{i=1}^{\infty} \alpha_{i} + \sum_{j=1}^{\infty} \beta_{j} < \infty$, such that 
\begin{align}
\label{parametersout}
\Pi_{q, t}(\theta) = \Pi_{q, t}(r(\theta))  \cdot \prod_{i=1}^{\infty} \frac{(t \alpha_{i} z; q)_{\infty}}{( \alpha_{i} z; q)_{\infty}} \quad \text{and} \quad \Pi_{t, q}(w_{t, q}(r(\theta))) = \Pi_{t, q}\left(\bar{\theta}\right)  \cdot \prod_{j=1}^{\infty} \frac{(q\beta_{j} z; t)_{\infty}}{( \beta_{j} z; t)_{\infty}}.
\end{align} 
For an arbitrary series $A(z) = 1+\sum_{n=1}^{\infty}a_{n} z^{n}$ we will write $w_{q, t}(A(z))$ for the  generating series  $\Pi_{q, t}(w_{q, t}(\tau))$, where $\tau$ is the specialization defined by $\Pi_{t, q}(\tau) = A(z)$. In particular, $w_{q, t}\left(\Pi_{t, q}\left(\hat{\theta}\right)\right) = \Pi_{q, t}(w_{q, t}\left(\hat{\theta}\right))$ for any specialization $\hat{\theta}$. If $A(z) = \exp \left(\sum_{m=1}^{\infty}h_{m}z^{m} \right)$, then by $\eqref{powerexponent}$  we get $\tau(p_{m}) = (-1)^{m-1}\frac{1-t^{m}}{1-q^{m}}h_{m}$ for the corresponding specialization $\tau$, hence $w_{q, t}(\tau)(p_{m}) = h_{m}$, hence 
\begin{align}
\label{dualonseries2}
w_{q, t} \left(\exp\left(\sum_{m=1}^{\infty}h_{m}z^{m}\right) \right) = \exp\left(\sum_{m=1}^{\infty}(-1)^{m-1}\frac{1-t^{m}}{1-q^{m}}h_{m}z^{m}\right).
\end{align}
Also by \eqref{dualunion} we have $w_{q, t} \left(A(z)B(z)\right) = w_{q, t}(A(z))w_{q, t}(B(z))$ for any series $A(z)$, $B(z)$.  As we have seen in Section \ref{sec:Specializations},
\begin{align}
\label{dualonseries}
w_{q, t}\left(\frac{(q \beta z; t)_{\infty}}{(\beta z; t)_{\infty}}\right) = 1+\beta z,  \qquad w_{q, t} \left(1+\beta z \right) = \frac{(t \beta z; q)_{\infty}}{(\beta z; q)_{\infty}},  
\end{align}
Then by applying $w_{q, t}$ we get by \eqref{parametersout} and \eqref{dualonseries}  that
\begin{multline}
\label{PlancherelReduction}
\Pi_{q, t}(r(\theta)) = w_{q, t}\left(\Pi_{t, q}(w_{t, q}(r(\theta)))\right) =  w_{q, t}\left(\Pi_{t, q}\left(\bar{\theta}\right)\right) \cdot \prod_{j=1}^{\infty} (1 + \beta_{j}z), \\
\Pi_{q, t}(\theta) = \Pi_{q, t}(r(\theta))  \cdot \prod_{i=1}^{\infty} \frac{(t \alpha_{i} z; q)_{\infty}}{( \alpha_{i} z; q)_{\infty}} = \Pi_{q, t}\left(w_{q, t}\left(\bar{\theta}\right)\right)  \cdot \prod_{i=1}^{\infty} \frac{(t \alpha_{i} z; q)_{\infty}}{( \alpha_{i} z; q)_{\infty}} \cdot \prod_{j=1}^{\infty} (1+\beta_{j}z).
\end{multline}
$\Pi_{t, q}\left(\bar{\theta}\right)$ is an entire function with value $1$ at $z=0$. It also has no zeroes. Indeed, suppose that $\delta$ is its zero of the smallest possible absolute value. Suppose $\ell$ is a multiplicity of $\delta$. Note that $\delta \notin \mathbb{R}_{\geq 0}$, since $\Pi_{t, q}\left(\bar{\theta}\right)(z) \in \mathbb{R}_{\geq 1}$ for $z \in \mathbb{R}_{\geq 0}$. Then $\Pi_{t, q}\left(\bar{\theta}\right)  = (1-\delta^{-1}z)^{\ell}H(z)$ for an entire function $H(z)$, such that $H(0)=1$ and $H(\delta) \neq 0$.  On the open disk $D = \{z: |z| < |\delta|\}$ we have $H(z) = e^{h(z)}$  for some convergent $\displaystyle h(z) = \sum_{m=1}^{\infty} h_{m}z^{m}$. 
Then by applying $w_{q, t}$  and \eqref{PlancherelReduction}, \eqref{dualonseries2} we have for $z \in D$.
\begin{multline*}
\Pi_{q, t}(r(\theta))(z) = \left(\prod_{j=1}^{\infty} (1+\beta_{j}z)\right) w_{q, t}\left(\Pi_{t, q}\left(\bar{\theta}\right)\right) = \left(\prod_{j=1}^{\infty} (1+\beta_{j}z)\right) w_{q, t} \left( 1-\delta^{-1}z \right)^{\ell}w_{q, t}\left(e^{h(z)}\right)  \\ = \left(\prod_{j=1}^{\infty} (1+\beta_{j}z)\right)\frac{(-t\delta^{-1}z; q)^{\ell}_{\infty}}{(-\delta^{-1}z; q)^{\ell}_{\infty}}\left(\exp\left(\sum_{m=1}^{\infty} (-1)^{m-1}\frac{1-t^{m}}{1-q^{m}}h_{m}z^{m}\right)\right) \\
= \left(\prod_{j=1}^{\infty} (1+\beta_{j}z)\right)\frac{(-t\delta^{-1}z; q)_{\infty}^{\ell}}{(-\delta^{-1}z; q)_{\infty}^{\ell}}H(-z)^{-1}\left(\exp\left(\sum_{m=1}^{\infty} (-1)^{m-1}\frac{q^{m}-t^{m}}{1-q^{m}}h_{m}z^{m}\right)\right),
\end{multline*}
but the right hand side goes to $\infty$ as  $z \to -\delta$, $z \in D$. Indeed, in such limit transition $(1+\delta^{-1}z)^{\ell} \to 0$, and the product of all other terms goes to a finite nonzero limit, since $1 - \beta_{j} \delta \neq 0$ for all $j \geq 1$, $H(\delta) \neq 0$, and the radius of convergence of $\displaystyle \sum_{m=1}^{\infty} (-1)^{m-1}\frac{q^{m}-t^{m}}{1-q^{m}}h_{m}z^{m}$ is at least $|\delta|/\max\{|q|, |t|\} > |\delta|$. This is a contradiction with the fact that $\Pi_{q, t}(r(\theta))$ is entire.  Hence $\Pi_{t, q}\left(\bar{\theta}\right)$ has no zeroes.
So $\Pi_{t, q}\left(\bar{\theta}\right) = e^{h(z)}$ for some entire function $\displaystyle h(z) = \sum_{m=1}^{\infty} h_{m}z^{m}$. Then $\tau: = w_{q, t}\left(\bar{\theta} \right)$ is a $(q, t)$-Macdonald positive specialization, such that
\begin{align*}
\Pi_{q, t}(\tau) = w_{q, t} \left( \Pi_{t, q} \left(\bar{\theta}\right) \right) = \exp \left(\sum_{m=1}^{\infty}(-1)^{m-1}\frac{1-t^{m}}{1-q^{m}}h_{m}z^{m}\right)
\end{align*} 
has no poles (or zeroes).
Hence
\begin{align}
\label{fastdecline}
\lim_{r \to \infty} \tau(g_{r})^{1/r} = \lim_{r \to \infty} \tau(e_{r})^{1/r} = 0.
\end{align}
Indeed, \eqref{fastdecline} follows, since we know that both
\begin{align*}
 \displaystyle \Pi_{q, t}(\tau) = \sum_{r=0}^{\infty} \tau(g_{r})z^{r} \quad \text{and}  \quad \displaystyle \sum_{r=0}^{\infty} \tau(e_{r})z^{r} = \sum_{r=0}^{\infty} \bar{\theta}(g_{r}(t, q))z^{r} = \Pi_{t, q}\left(\bar{\theta}\right)
\end{align*}
  converge for any $z$.  By \eqref{PlancherelReduction} it remains to prove that $\tau$ is a Plancherel specialization. So we have reduced proving Theorem  \ref{Kerov conjecture} to proving Theorem \ref{Kerov conjecture reduced}, and it remains to prove the later theorem.
\end{proof}  

\section{Proof. Part II. Diffusivity Argument}
\label{sec:proof2}

\begin{proof}[Proof of Theorem \ref{Kerov conjecture reduced}]
We can assume that $\tau(g_{1})>0$ , since if $\tau(g_{1})= 0$, then Lemma \ref{lemma:zerovalue} would imply that $\tau(Q_{\lambda}) = 0$ for any $\lambda$ . Hence in such case $\tau$ would just be the trivial specialization. 

\smallskip

To show that $\tau$ is a Plancherel specialization we need to show that $\tau(p_{k})=0$ for every $k \geq 2$. Assume that this is not true and find the smallest $k \geq 2$, such that $\tau(p_{k}) \neq 0$. Then by $\eqref{powerexponent}$ we get
\begin{align*}
\tau(g_{k}) = \frac{\left(\frac{1-t}{1-q} \tau(p_{1}) \right)^{k}}{k!} + \frac{1-t^{k}}{k\left(1-q^{k}\right)} \tau(p_{k}), \quad \text{hence} \quad \delta'  := \left \lvert \tau(g_{k}) - \frac{\tau(g_{1})^{k}}{k!} \right \rvert = \left \lvert \frac{1-t^{k}}{k\left(1-q^{k}\right)} \tau(p_{k}) \right \rvert  > 0.
\end{align*}
Then $\left \lvert \tau\left(g_{k}Q_{\mu} - g_{1}^{k}Q_{\mu}/k!\right) \right \rvert = \delta' \tau(Q_{\mu})$ for any partition $\mu$. To obtain a contradiction we will show the following lemma.
\begin{lemma}
\label{lemma:cancellation}
Under assumption \eqref{fastdecline} for any $\delta > 0$ there exists a partition $\mu$, such that 
\begin{align*}
\left \lvert \tau\left(g_{k}Q_{\mu} - \frac{g_{1}^{k}Q_{\mu}}{k!}\right) \right \rvert < \delta \tau(Q_{\mu}).
\end{align*}
\end{lemma}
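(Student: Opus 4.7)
The strategy is to apply the Pieri formula \eqref{Pieri} to both $g_k Q_\mu$ and $g_1^k Q_\mu/k!$, expand each as a weighted sum of $\tau(Q_\lambda)$ over $\lambda$ with $|\lambda|-|\mu|=k$, and compare term by term. Directly from \eqref{Pieri},
\begin{align*}
g_k Q_\mu = \sum_{\mu \prech \lambda,\ |\lambda|-|\mu|=k} \psi_{\lambda/\mu}\, Q_\lambda,
\end{align*}
where only horizontal strips contribute, while iterating the $r=1$ case of \eqref{Pieri} and dividing by $k!$ gives
\begin{align*}
\frac{g_1^k}{k!}\,Q_\mu \;=\; \frac{1}{k!}\sum_{\lambda:\ |\lambda|-|\mu|=k}\Bigl(\sum_{T \in \mathcal{ST}(\lambda/\mu)} \psi(T)\Bigr)\, Q_\lambda,
\end{align*}
which now ranges over all $\lambda \supset \mu$ reachable by some standard skew tableau.

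The first observation is that when the $k$ boxes of $\lambda/\mu$ are \emph{well separated}, say with pairwise row- and column-gaps at least $d$, the factors $b_\mu(s)/b_\lambda(s)$ in \eqref{PsiPhi} decouple across boxes and are essentially insensitive to the order of insertion. In that regime $\lambda/\mu$ is automatically a horizontal strip, all $k!$ standard tableaux of shape $\lambda/\mu$ carry the same weight $\psi_{\lambda/\mu}$ up to an error of order $\max\{|q|,|t|\}^d$, and consequently the coefficients of $Q_\lambda$ in the two expansions agree to within the same error. Restricted to well-separated $\lambda$, the difference $\tau(g_k Q_\mu - g_1^k Q_\mu/k!)$ is therefore bounded by $O\!\left(\max\{|q|,|t|\}^d\right)$ times the total mass of such terms, which can be made smaller than $\tfrac{\delta}{2}\,\tau(Q_\mu)$ by taking $d$ large.

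It remains to control the ``bad'' $\lambda$, those with at least two of the $k$ added boxes within distance $d$. By Lemma \ref{lemma:psibound}, the Pieri weights and all $\psi(T)$ are bounded by a constant $C^{O(k)}$, so it suffices to exhibit $\mu$ with
\begin{align*}
\sum_{\lambda \text{ bad}} \tau(Q_\lambda) \;\ll\; \tau(Q_\mu).
\end{align*}
This is the ``diffusivity'' estimate promised by Lemma \ref{lemma:badsteps}, and it is precisely the place where hypothesis \eqref{fastdecline1} enters: fast decay of $\tau(g_r)^{1/r}$ forbids the added boxes from concentrating in one long row (through \eqref{Pieri}), while fast decay of $\tau(e_r)^{1/r}$ forbids concentration in one long column (through the dual Pieri \eqref{Pieridual}). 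Together they push the $\tau$-mass toward partitions whose corners are spread out.

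The main obstacle will be \emph{producing} such a $\mu$ and converting \eqref{fastdecline1} into a quantitative spread-out bound. The plan is a probabilistic-method argument: grow $\mu$ from $\emptyset$ by iterated Pieri steps with transition weights proportional to $\psi_{\nu/\mu}\tau(Q_\nu)$, and show that among the large partitions produced, a positive fraction of the weighted mass lies on $\mu$'s with many pairwise far-apart outer corners. The subtlety is that $\tau$ is a linear functional, not a probability measure, so ``typicality'' must be expressed entirely in ratios $\tau(Q_\nu)/\tau(Q_\mu)$ between nearby partitions rather than as expectations; the bookkeeping is what Lemmas \ref{lemma:2tbaleauxbound} and \ref{lemma:comparison} should organize. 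Once such $\mu$ is found, choosing $d$ (and thus the size of $\mu$) large enough in terms of $\delta$, $k$, $q$, $t$, and $C$ closes the argument.
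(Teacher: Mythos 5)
Your proposal follows the same approach as the paper: expand both $\tau(g_k Q_\mu)$ and $\tau(g_1^k Q_\mu/k!)$ via Pieri, split into "good" $\lambda$ (boxes of $\lambda/\mu$ pairwise at distance $>d$, so $\lambda/\mu$ is a horizontal and vertical strip with $k!$ standard fillings) and "bad" $\lambda$, bound the good contribution by an error of size $O(\max\{|q|,|t|\}^d)$ relative to $\tau(g_k)\tau(Q_\mu)$, and bound the bad contribution through the diffusivity Lemma~\ref{lemma:badsteps}. The sketch correctly identifies the roles of all the ingredients and their order of dependence (choose $d$ from $\delta,k,q,t$ first, then $\epsilon=\delta/(2C^k)$ and $\mu$ from Lemma~\ref{lemma:badsteps}); this matches the paper's argument.
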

So proving Lemma \ref{lemma:cancellation} would show that $\tau$ is a Plancherel specialization and complete the proof of Theorem \ref{Kerov conjecture reduced}.
\end{proof}
\begin{proof}[Proof of Lemma \ref{lemma:cancellation}]
With the help of \eqref{Pieri} we can expand both $\tau\left(g_{k}Q_{\mu}\right)$ and $\tau\left(g_{1}^{k}Q_{\mu}/k!\right)$ as weighted sums of $\tau(Q_{\lambda})$, $\mu \subset \lambda$, $|\lambda|-|\mu|=k$ with some non-negative coefficients. We will show that \eqref{fastdecline} implies a weak version of diffusivity, i.e. that for some $\mu$ most contributions to these sums come from such $\lambda$, that all boxes 
of $\lambda \backslash \mu$ are far away from each other. More precisely, define {\it distance} between boxes $(i_{1}, j_{1})$ and $(i_{2}, j_{2})$ of a partition as $|i_{1}-i_{2}|+|j_{1}-j_{2}|$. For a positive integer $d$ we will write $\mu \precd \lambda$ if $\mu \subset \lambda$ and $\lambda\backslash \mu$ contains two distinct boxes at distance $\leq d$ from each other. We will write $\mu \nprecd \lambda$ if $\mu \subset \lambda$ and the distance between any two distinct boxes of $\lambda \backslash \mu$ is greater than $d$. Then 
\begin{lemma}
\label{lemma:badsteps}
For any $\epsilon > 0$ and any positive integers $d$, $s$ there exists a partition $\mu$, such that 
\begin{align*}
\sum_{\mu \precd \lambda, \ |\lambda| - |\mu| = s} \tau(Q_{\la}) < \epsilon \tau(Q_{\mu}).
\end{align*}
\end{lemma}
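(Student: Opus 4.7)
I plan to prove Lemma \ref{lemma:badsteps} by the probabilistic method. We may assume $\tau(g_{1}) > 0$; otherwise Lemma \ref{lemma:zerovalue} forces $\tau$ to vanish on every nonempty partition and the statement is vacuous. Iterating the Pieri formula \eqref{Pieri}, write $g_{1}^{N} = \sum_{|\mu|=N} f_{\mu} Q_{\mu}$ with $f_{\mu} := \sum_{T \in \mathcal{ST}(\mu)} \psi(T) > 0$, and introduce the Plancherel-type probability measure on partitions of size $N$ given by $\mathbb{P}_{N}(\mu) := f_{\mu}\, \tau(Q_{\mu})/\tau(g_{1})^{N}$. This is the marginal at time $N$ of a Markovian growth process on partitions whose one-step transition from $\mu$ to $\nu$ (for $\mu \nearrow \nu$) has probability $\psi_{\nu/\mu}\, \tau(Q_{\nu})/(\tau(g_{1})\tau(Q_{\mu}))$. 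Let $(\mu,\lambda) = (\mu_{N}, \mu_{N+s})$ denote the joint random pair at times $N$ and $N+s$.

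The Lemma reduces to the \emph{diffusivity} claim that $\Pr[\mu_{N} \precd \mu_{N+s}] \to 0$ as $N \to \infty$. Indeed, setting $h_{\lambda/\mu} := \sum_{T \in \mathcal{ST}(\lambda/\mu)} \psi(T)$ and using that $\tau(Q_{\mu} g_{1}^{s}) = \tau(Q_{\mu})\tau(g_{1})^{s}$ expands via iterated Pieri into $\sum_{\lambda} h_{\lambda/\mu}\tau(Q_{\lambda})$, the conditional law is $\Pr[\lambda \mid \mu_{N}=\mu] = h_{\lambda/\mu}\tau(Q_{\lambda})/(\tau(g_{1})^{s}\tau(Q_{\mu}))$. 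Bounding $h_{\lambda/\mu} \geq C^{-s}$ via Lemma \ref{lemma:psibound} applied box-by-box along any standard tableau of $\lambda/\mu$ gives
\begin{align*}
\sum_{\mu\, \precd\, \lambda,\ |\lambda|-|\mu|=s}\tau(Q_{\lambda}) \leq C^{s}\sum_{\mu\, \precd\, \lambda}h_{\lambda/\mu}\tau(Q_{\lambda}) = C^{s}\,\tau(g_{1})^{s}\,\tau(Q_{\mu})\,\Pr[\mu \precd \mu_{N+s}\mid \mu_{N} = \mu].
\end{align*}
If the unconditional probability is smaller than $\epsilon/(C^{s}\tau(g_{1})^{s})$ for some $N$, the minimum over $\mu$ of the conditional probabilities is at most the average, and a $\mu$ satisfying the lemma exists in the support of $\mathbb{P}_{N}$.

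To establish diffusivity, I would apply a union bound over pairs of time indices $N < t_{1} < t_{2} \leq N+s$, reducing to the pair-wise statement $\Pr[\operatorname{dist}(b_{t_{1}}, b_{t_{2}}) \leq d] \to 0$, where $b_{t} := \mu_{t} \setminus \mu_{t-1}$ is the box added at step $t$. This pair-wise bound is the content of the auxiliary Lemmas \ref{lemma:2tbaleauxbound} and \ref{lemma:comparison}: the former controls the total $\psi$-weighted mass of two-entry configurations confined to a $(d+1)\times(d+1)$ window, while the latter compares such clustered configurations with dispersed ones, yielding a pair-wise bound controlled by the reciprocal of the number of outer corners of the intermediate partition $\mu_{t_{1}-1}$.

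The main obstacle is therefore showing that under $\mathbb{P}_{N}$ the random partition $\mu_{N}$ has a number of outer corners diverging with $N$. Here the $\omega_{q,t}$-duality of Macdonald functions is essential: the Pieri formula \eqref{Pieri} together with $\tau(g_{r})^{1/r}\to 0$ from \eqref{fastdecline1} prevents $\mu_{N}$ from concentrating on a few long rows (as that would force $\tau(g_{r})$ to decay merely geometrically), and the dual Pieri formula \eqref{Pieridual} together with $\tau(e_{r})^{1/r}\to 0$ prevents concentration on a few long columns. Together these dual decay conditions force the perimeter of $\mu_{N}$ to diverge with $N$, providing the requisite abundance of outer corners. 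Without a Vershik-Kerov limit-shape theorem in the general Macdonald regime, making this balancing quantitative is the most delicate step; it is to be handled via direct generating-function estimates for $\tau(g_{r}), \tau(e_{r})$ combined with the $\psi$-weighted tableau bounds of Lemma \ref{lemma:psibound}.
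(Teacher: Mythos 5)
Your proposal correctly identifies the high-level framework the paper uses (a probabilistic-method argument on a Plancherel-type random partition, with the reduction to an averaged pairwise statement via Lemma \ref{lemma:psibound}), and you correctly identify the two decay hypotheses in \eqref{fastdecline1} and the $\omega_{q,t}$-duality as the inputs. But the concrete route you sketch after that diverges from the paper's proof, and the step you single out as ``the most delicate'' is a genuine gap, not a quantitative detail.

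Specifically, you reduce diffusivity to a single-step pairwise bound and then argue that this pairwise bound will follow from showing that the random partition $\mu_N$ has a number of outer corners diverging with $N$, which you propose to deduce from the fact that $\tau(g_r)^{1/r} \to 0$ and $\tau(e_r)^{1/r} \to 0$ prevent long rows and long columns. That deduction does not go through: the absence of long rows and long columns does not force many outer corners (a near-square $\sqrt{n}\times\sqrt{n}$ shape has $\lambda_1, \lambda_1'$ both $o(n)$ yet only $O(1)$ corners). No limit-shape theorem is available in the Macdonald regime, and the paper does not attempt to prove a corner-counting statement. Instead, it reduces to $s=2$ and argues by contradiction: if no $\mu$ works, then for \emph{every} $\mu$ the two-box bad transitions carry weight $\geq \epsilon \tau(Q_\mu)$, and iterating this $n$ times produces total mass $\geq \epsilon^n$ concentrated on paths in which every one of the $n$ pairs of added boxes is clustered. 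These paths are encoded as fillings in $\mathcal{SD}_d(\lambda)$ (each label appears exactly twice at distance $\leq d$); Lemma \ref{lemma:2tbaleauxbound} then maps $\mathcal{SD}_d(\lambda)$ with controlled multiplicity to standard tableaux of shape $\subset\lceil\lambda/2\rceil$, and Lemma \ref{lemma:comparison} gives $\tau(Q_\lambda)\leq\delta^{|\lambda|-|\mu|}\tau(Q_\mu)$, which together bound the same mass by $(E(d)^2\delta^{1/2}\cdot\text{const})^n$ and contradict $\geq\epsilon^n$. The point is that iterating the contrapositive builds a rigid global combinatorial object that one can count; this sidesteps any need to understand the local corner structure of a typical partition.

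You also misstate what Lemmas \ref{lemma:2tbaleauxbound} and \ref{lemma:comparison} assert. Neither is a pairwise or window bound: the first is a global counting statement about how many $\mathcal{SD}_d$ fillings can map to a given standard tableau, and the second is a global super-geometric decay bound $\tau(Q_\lambda)\lesssim\delta^{|\lambda|-|\mu|}\tau(Q_\mu)$ for $\mu\subset\lambda$ with large skew size. Neither says anything about ``the reciprocal of the number of outer corners.'' So while your intuition about why the lemma should be true is the same as the one recorded in the paper's synopsis, your proposed execution is missing the central idea (the $\mathcal{SD}_d$ iteration) and the corner-counting replacement you propose does not work as stated.
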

We will prove Lemma \ref{lemma:badsteps} later. Then to show that the statement of Lemma \ref{lemma:cancellation} holds, we will need to  pick $\mu$ as in the statement of Lemma \ref{lemma:badsteps} and show that the coefficients of $\tau(Q_{\lambda})$ in the expansions of $\tau\left(g_{k}Q_{\mu}\right)$ and $\tau\left(g_{1}^{k}Q_{\mu}/k!\right)$ are close to each other for  $\mu \nprecd \lambda$. More precisely, we will also need the following lemma, that we will prove later.
\begin{lemma}
\label{lemma:strictpositivity}
$\tau(g_{s}) > 0$ for any $s \geq 1$.
\end{lemma}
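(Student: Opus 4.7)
The plan is to argue by contradiction. Suppose $\tau(g_s)=0$ for some smallest $s\ge 2$ (we know $\tau(g_1)>0$ from the opening of the proof of Theorem~\ref{Kerov conjecture reduced}). Then Lemma~\ref{lemma:zerovalue} applied to $\mu=(s)$ forces $\tau(g_r)=\tau(Q_{(r)})=0$ for every $r\ge s$, so the generating series
\[
G(z) := \Pi(\tau) = \sum_{r\ge 0}\tau(g_r)z^r
\]
is a polynomial of degree exactly $s-1\ge 1$, with non-negative coefficients. In particular $G(x)\ge 1$ for every $x\ge 0$, so $G$ has no root on $[0,\infty)$.

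The key tool I would introduce is the identity
\[
H(z)\,F(-z) \;=\; H(tz)\,F(-qz) \qquad \text{in } \Lambda[[z]],
\]
where $F(z)=\sum_r g_r z^r=\prod_i (tx_iz;q)_\infty/(x_iz;q)_\infty$ and $H(z)=\sum_r e_r z^r=\prod_i(1+x_iz)$. To prove it, I would verify that both $H(z)/H(tz)$ and $F(-qz)/F(-z)$ equal $\prod_i (1+x_iz)/(1+tx_iz)$; the first factorization is immediate, and the second reduces to the single-variable telescoping $(-a;q)_\infty/(-qa;q)_\infty=1+a$ applied to each $x_i$ (as in the derivation just after \eqref{qGauss}). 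Applying the homomorphism $\tau$ coefficientwise gives, as formal power series,
\[
E(z)\,G(-z) \;=\; E(tz)\,G(-qz), \qquad E(z) := \sum_{r\ge 0}\tau(e_r)z^r,
\]
and since both sides are entire functions of $z\in\mathbb{C}$ by the decay hypotheses $\lim_r\tau(g_r)^{1/r}=\lim_r\tau(e_r)^{1/r}=0$, the identity extends to an equality of entire functions on all of $\mathbb{C}$.

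The endgame is an iteration argument exploiting minimality of the smallest root modulus. Choose $\xi_0\in\mathbb{C}$ with $G(-\xi_0)=0$ and $|\xi_0|>0$ minimal among the finitely many roots of $z\mapsto G(-z)$; such $\xi_0$ exists because $G$ is a nonconstant polynomial. Plugging $z=\xi_0$ into the identity gives $0=E(t\xi_0)\,G(-q\xi_0)$, and since $|q\xi_0|<|\xi_0|$ is strictly less than every root modulus of $G(-\cdot)$, the factor $G(-q\xi_0)$ is nonzero; hence $E(t\xi_0)=0$. Inductively, plugging $z=t^{n-1}\xi_0$ yields $E(t^n\xi_0)\,G(-qt^{n-1}\xi_0)=0$; since $|qt^{n-1}\xi_0|<|\xi_0|$ we again have $G(-qt^{n-1}\xi_0)\ne 0$, so $E(t^n\xi_0)=0$ for every $n\ge 1$. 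But $t^n\xi_0\to 0$ while $E$ is continuous with $E(0)=1$, giving the desired contradiction.

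The main obstacle is setting up the functional equation $E(z)G(-z)=E(tz)G(-qz)$; everything else is a clean iteration using only $|q|,|t|<1$ and continuity of $E$ at $0$. The decay hypothesis $\lim_r\tau(e_r)^{1/r}=0$ coming from Theorem~\ref{Kerov conjecture reduced} is essential: without $E$ being entire one could not conclude $E(t^n\xi_0)\to E(0)=1$, and the contradiction would collapse.
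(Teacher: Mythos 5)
Your proof is correct, and it takes a genuinely different route from the paper's. The paper argues combinatorially: assuming $\tau(g_k)=0$, Lemma~\ref{lemma:zerovalue} kills $\tau(Q_\lambda)$ for all $\lambda$ with $\lambda_1\ge k$, so $\tau(g_1)^n$ collapses onto partitions with $\lambda_1'\ge n/(k-1)$, contradicting the second half of Lemma~\ref{lemma:nolongrows}. Your argument instead extracts an exact functional equation, $E(z)G(-z)=E(tz)G(-qz)$, from the $q$-Pochhammer telescoping $(-a;q)_\infty/(-qa;q)_\infty=1+a$, and then uses the hypothesis $\lim_r\tau(e_r)^{1/r}=0$ to promote this to an identity of \emph{entire} functions; picking a root $\xi_0$ of $G(-\,\cdot\,)$ of minimal modulus (nonzero since $G(0)=1$, existent since $\deg G=s-1\ge 1$) and iterating forces $E(t^n\xi_0)=0$ for all $n\ge 1$, contradicting $E(0)=1$ as $t^n\xi_0\to 0$. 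Both proofs use the full decay hypothesis \eqref{fastdecline1} (yours to make $E$ entire, the paper's inside Lemma~\ref{lemma:nolongrows}), but your approach is pleasantly free of tableau counting and trades Lemma~\ref{lemma:psibound} for a clean complex-analytic step; the minor caveat is that the identity $H(z)F(-z)=H(tz)F(-qz)$ is an extra ingredient not established in the paper, so in a self-contained write-up you would need to record its one-line verification in $\Lambda_n[[z]]$ for each finite number of variables before passing to $\Lambda$.
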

Suppose that Lemmas \ref{lemma:strictpositivity} and \ref{lemma:badsteps} are true. Find $d>1$ such that
\begin{align*}
\left(1 + \frac{2\left(\max\{|q|, |t|\}\right)^{d-1}}{1-\max\{|q|, |t|\}}\right)^{k(k-1)} < 1 + \frac{\delta}{2\tau(g_{k})} \quad \text{and} \quad \left( 1 - \frac{2\left(\max\{|q|, |t|\}\right)^{d-1}}{1-\max\{|q|, |t|\}}\right)^{k(k-1)} > 1 -\frac{\delta}{2\tau(g_{k})}.
\end{align*}
Then for such $d$, $s=k$ and $\epsilon =\frac{\delta}{2C^{k}}$ find $\mu$, as in the statement of the Lemma \ref{lemma:badsteps}. By \eqref{Pieri} we have
\begin{multline}
\label{powermultiplication}
 \tau(g_{k}Q_{\mu})- \frac{\tau\left(g_{1}^{k}Q_{\mu}\right)}{k!}  = \sum_{\mu \prech \lambda, \ |\lambda|-|\mu|=k} \psi_{\lambda/\mu} \tau(Q_{\lambda}) - \frac{1}{k!}\sum_{\mu \subset \lambda, \ |\lambda|-|\mu|=k} \sum_{\ T \in \mathcal{ST}(\lambda/\mu)} \psi(T) \tau(Q_{\lambda}) \\
= \left(\sum_{\mu \prech \lambda, \ \mu \precd \lambda, \ |\lambda|-|\mu|=k} \psi_{\lambda/\mu} \tau(Q_{\lambda}) - \frac{1}{k!}\sum_{\mu \precd \lambda, \ |\lambda|-|\mu|=k} \sum_{\ T \in \mathcal{ST}(\lambda/\mu)} \psi(T) \tau(Q_{\lambda})\right) + \\
+ \frac{1}{k!}\left(\sum_{\mu \nprecd \lambda, \ |\lambda|-|\mu|=k} \tau(Q_{\lambda})\sum_{T \in \mathcal{ST}(\lambda/\mu)} \left(\psi_{\lambda/\mu}  - \psi(T)\right)\right)
\end{multline}
The second equality follows, since for $\mu \nprecd \lambda$ we know that $\lambda/\mu$ is both a horizontal and a vertical strip (otherwise, it would have two adjacent boxes), and there are exactly $k!$ standard tableaux of shape $\lambda/\mu$. By choice of $\mu$ and Lemma \ref{lemma:psibound}  the absolute value of the first term of the right hand side of \eqref{powermultiplication} is strictly bounded  by $C^{k}\epsilon\tau(Q_{\mu}) = \delta \tau(Q_{\mu})/2$. To finish the proof of lemma \ref{lemma:cancellation} it will be enough to show that the absolute value of the second term of the right hand side of \eqref{powermultiplication} is weakly bounded  by $\delta \tau(Q_{\mu})/2$.  Indeed, suppose $\mu, \lambda, T$ are such, that $\mu \nprecd \lambda, \ |\lambda|-|\mu|=k$, $T \in \mathcal{ST}(\lambda/\mu)$. Denote by $(i_{m}, j_{m})$ position of the entry equal to $m$ in $T$. All $i_{1}, \ldots, i_{k}$ are distinct, as well as all $j_{1}, \ldots, j_{k}$.  Then by \eqref{PsiPhi} we get
\begin{align}
\label{difference}
\frac{\psi(T)}{\psi_{\lambda/\mu}} =  \prod_{m=1 \ }^{k} \prod_{\ell: \ j_{\ell} < j_{m}} \left( b_{T_{m}}(i_{m}, j_{\ell})^{-1} \right) b_{T_{m-1}}(i_{m}, j_{\ell})
\end{align}
Each of the $k(k-1)$ terms $b(i_{m}, j_{\ell})^{\pm 1}$ of the product on the right hand side of \eqref{difference} is of the form either $\frac{1-t^{a+1}q^{b}}{1-t^{a}q^{b+1}}$ or $\frac{1-t^{a}q^{b+1}}{1-t^{a+1}q^{b}}$ for some non-negative integers $a, b$, where  the distance between boxes $(i_{m}, j_{m})$ and $(i_{\ell}, j_{\ell})$ is  $\leq a+b+2$, so $a+b \geq d-2$. Hence each term is bounded from above and from below respectively by 
\begin{align*}
1 + \frac{2\left(\max\{|q|, |t|\}\right)^{d-1}}{1-\max\{|q|, |t|\}} \quad \text{and} \quad 1 - \frac{2\left(\max\{|q|, |t|\}\right)^{d-1}}{1-\max\{|q|, |t|\}}.
\end{align*}
Hence $|\psi_{\lambda/\mu} - \psi(T)| < \frac{\delta\psi_{\lambda/\mu}}{2\tau(g_{k})}$ by our choice of $d$. Hence indeed the absolute value of the second term of the right hand side of \eqref{powermultiplication} is weakly bounded by
\begin{multline*}
\frac{1}{k!}\sum_{\mu \nprecd \lambda, \ |\lambda|-|\mu|=k} \tau(Q_{\lambda})\sum_{T \in \mathcal{ST}(\lambda/\mu)} |\psi_{\lambda/\mu}  - \psi(T)| \leq \frac{\delta}{2\tau(g_{k})}\sum_{\mu \nprecd \lambda, \ |\lambda|-|\mu|=k} \psi_{\lambda/\mu}\tau(Q_{\lambda})  \\ \leq \frac{\delta}{2\tau(g_{k})}\sum_{\mu \prech \lambda, \ |\lambda|-|\mu|=k} \psi_{\lambda/\mu}\tau(Q_{\lambda}) = \delta \tau(Q_{\mu})/2.
\end{multline*}
\begin{figure}[h]
\includegraphics[width = 
0.3\textwidth]{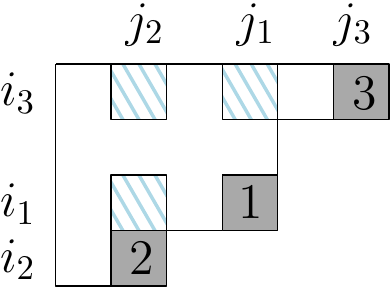}
\caption{Comparing $\psi(T)$ and $\psi_{\lambda/\mu}$. The difference in this case comes from boxes $(i_{1}, j_{2})$, $(i_{3}, j_{2})$, $(i_{3}, j_{1})$.}
\label{Attached}
\end{figure}
\end{proof}
It remains to prove Lemma \ref{lemma:strictpositivity} and Lemma \ref{lemma:badsteps}.
Repeatedly using \eqref{Pieri} we can get
\begin{align}
\label{n-level}
\tau(g_{1})^{n} = \sum_{|\lambda| = n} \sum_{T \in \mathcal{ST}(\lambda)} \psi(T) \tau(Q_{\lambda}) \quad \text{for any } n.
\end{align}
We will first show that contribution to the sum \eqref{n-level} of the terms with long first row of $\lambda$ is small.
\begin{lemma} 
\label{lemma:nolongrows}
For every $\epsilon_{1}, \epsilon_{2} > 0$ there exists such $N$, that for any $n \geq N$
\begin{align*}
\sum_{|\lambda| = n, \lambda_{1} \geq \epsilon_{1}n} \sum_{\ T \in \mathcal{ST}(\lambda)} \psi(T) \tau(Q_{\lambda}) < \epsilon_{2}^{n}.
\end{align*}
Similarly, there exists such $N'$, that for any $n \geq N'$
\begin{align*}
\sum_{|\lambda| = n, \lambda_{1}' \geq \epsilon_{1}n} \sum_{\ T \in \mathcal{ST}(\lambda)} \psi(T) \tau(Q_{\lambda}) < \epsilon_{2}^{n}.
\end{align*}
\end{lemma}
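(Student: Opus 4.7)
The plan is to peel off the first row (resp.\ first column) of $\lambda$, relating $\tau(Q_\lambda)$ and $M_\lambda := \sum_{T \in \mathcal{ST}(\lambda)} \psi(T)$ to the corresponding quantities for the smaller partition via the Pieri formulas, and then to sum. Write $\tilde\lambda := (\lambda_2, \lambda_3, \ldots)$ for $\lambda$ with its first row deleted.

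First, I would establish a clean Pieri inequality: $\tilde\lambda \subset \lambda$ and the skew shape $\lambda/\tilde\lambda$ has exactly one box in each column $1, \ldots, \lambda_1$ (one in each row $i$ with $\lambda_i > \lambda_{i+1}$), so $\tilde\lambda \prech \lambda$ and $|\lambda|-|\tilde\lambda|=\lambda_1$; moreover $C_{\lambda/\tilde\lambda}$ already covers every box of $\lambda$, so $R_{\lambda/\tilde\lambda} - C_{\lambda/\tilde\lambda} = \emptyset$ and hence $\psi_{\lambda/\tilde\lambda} = 1$ by \eqref{PsiPhi}. Applying \eqref{Pieri},
\begin{align*}
Q_{\tilde\lambda}\, g_{\lambda_1} \geq \psi_{\lambda/\tilde\lambda}\, Q_\lambda = Q_\lambda, \qquad \text{so} \qquad \tau(Q_\lambda) \leq \tau(Q_{\tilde\lambda})\,\tau(g_{\lambda_1}).
\end{align*}
Next, I would bound $M_\lambda$ combinatorially: Lemma \ref{lemma:psibound} gives $C^{-n} f^\lambda \leq M_\lambda \leq C^n f^\lambda$, where $f^\lambda := |\mathcal{ST}(\lambda)|$, and a standard injection (record the $\lambda_1$ entries filling row $1$ of $T \in \mathcal{ST}(\lambda)$, then order-preservingly relabel the remaining entries to produce an element of $\mathcal{ST}(\tilde\lambda)$) yields $f^\lambda \leq \binom{n}{\lambda_1} f^{\tilde\lambda}$. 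Combining,
\begin{align*}
M_\lambda \leq C^{2n}\binom{n}{\lambda_1} M_{\tilde\lambda}.
\end{align*}

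Multiplying the two estimates and summing over $\lambda$ with $|\lambda|=n$, $\lambda_1 = s$ (equivalently, over $\tilde\lambda$ of size $n-s$ after dropping the constraint $\tilde\lambda_1 \leq s$) and using \eqref{n-level},
\begin{align*}
\sum_{|\lambda|=n,\ \lambda_1 = s} M_\lambda\, \tau(Q_\lambda) \leq C^{2n}\binom{n}{s}\tau(g_s)\,\tau(g_1)^{n-s}.
\end{align*}
Given any $\delta > 0$, \eqref{fastdecline} produces $R$ with $\tau(g_s) \leq \delta^s$ for all $s \geq R$. For $n \geq R/\epsilon_1$, summing the above over $s \in [\epsilon_1 n, n]$ and using the elementary bounds $\binom{n}{s} \leq 2^n$, $\delta^s \leq \delta^{\epsilon_1 n}$, and $\tau(g_1)^{n-s} \leq \max(1,\tau(g_1))^{(1-\epsilon_1) n}$ gives an upper bound of the form
\begin{align*}
(n+1)\bigl(2C^2\,\delta^{\epsilon_1}\,\max(1,\tau(g_1))^{1-\epsilon_1}\bigr)^n.
\end{align*}
Choosing $\delta > 0$ small enough to make the base strictly less than $\epsilon_2$ then makes this $< \epsilon_2^n$ for all $n$ sufficiently large, proving the first statement.

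The second statement (long first column) follows by the identical procedure with $\tilde\lambda$ replaced by $\hat\lambda$, the partition $\lambda$ with its first column removed: $\hat\lambda \precv \lambda$ and $|\lambda|-|\hat\lambda|=\lambda_1'$, Lemma \ref{lemma:psibound} gives $\phi'_{\lambda/\hat\lambda} > C^{-n}$, and the dual Pieri \eqref{Pieridual} yields $\tau(Q_\lambda) \leq C^n\,\tau(Q_{\hat\lambda})\,\tau(e_{\lambda_1'})$; the combinatorial bound $f^\lambda \leq \binom{n}{\lambda_1'}f^{\hat\lambda}$ is proved by the analogous injection on the first column, and the other half of \eqref{fastdecline} ($\tau(e_r)^{1/r} \to 0$) supplies the decay. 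No genuine obstacle is anticipated: the only concern is the exponential-in-$n$ overhead $C^{O(n)}$ coming from Lemma \ref{lemma:psibound}, but this is easily absorbed because \eqref{fastdecline} permits $\delta^{\epsilon_1}$ to be chosen arbitrarily small.
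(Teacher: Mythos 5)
Your proof is correct and follows essentially the same route as the paper's: peel off the first row (resp.\ first column) via the Pieri formula $\tau(Q_\lambda)\le \tau(g_{\lambda_1})\tau(Q_{\tilde\lambda})$, bound $\sum_T\psi(T)$ combinatorially against $\sum_T\psi(T)$ for the row-removed shape using Lemma \ref{lemma:psibound} and the injection $f^\lambda\le\binom{n}{\lambda_1}f^{\tilde\lambda}$, sum and apply \eqref{n-level} and \eqref{fastdecline}. The only cosmetic difference is that you keep the binomial and slice by $\lambda_1=s$, whereas the paper absorbs $\binom{n}{\lambda_1}\le 2^n$ immediately, and for the second part you bound $1/\phi'_{\lambda/\hat\lambda}\le C^n$ directly rather than routing through $\tau(P_\lambda)$ and $b_\lambda<C^{|\lambda|}$ as the paper does.
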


\begin{proof}[Proof of Lemma \ref{lemma:nolongrows}] We will prove only the first part of the statement, since the argument for the second part is analogous. Equation \eqref{Pieri} implies that $\tau(Q_{\lambda}) \leq \tau(g_{\lambda_{1}})\tau(Q_{\lambda^{-}})$, where $\lambda^{-}$ denotes the partition $(\lambda_{2}, \ldots, \lambda_{\ell})$. Then for $|\lambda| =n$ with the help of the lemma \ref{lemma:psibound} we get
\begin{multline*}
\sum_{\ T \in \mathcal{ST}(\lambda)} \psi(T)  \leq C^{n} |\mathcal{ST(\lambda)}| \leq C^{n}\left(\begin{array}{c} n \\ \lambda_{1} \end{array}\right)|\mathcal{ST(\lambda^{-})}| \leq \left(2C\right)^{n}|\mathcal{ST(\lambda^{-})}| \leq \left(2C^{2}\right)^{n} \sum_{\ T \in \mathcal{ST}(\lambda^{-})} \psi(T). 
\end{multline*}
Hence by \eqref{n-level}
\begin{multline*}
\sum_{|\lambda| = n, \lambda_{1} \geq \epsilon_{1}n} \sum_{\ T \in \mathcal{ST}(\lambda)} \psi(T) \tau(Q_{\lambda}) \leq \left(2C^{2}\right)^{n}\sum_{|\lambda| = n, \lambda_{1} \geq \epsilon_{1}n} \tau(g_{\lambda_{1}})\tau(Q_{\lambda^{-}})\sum_{\ T \in \mathcal{ST}(\lambda^{-})} \psi(T) \\ \leq 
\left(2C^{2}\right)^{n} \left(\sup_{p \geq \epsilon_{1}n}\tau(g_{p})\right)\sum_{m=0}^{n-1} \sum_{|\mu| = m} \sum_{\ T \in \mathcal{ST}(\mu)} \psi(T) \tau(Q_{\mu}) \leq \left(2C^{2}\right)^{n} \left(\sup_{p \geq \epsilon_{1}n}\tau(g_{p})\right) \frac{\tau(g_{1})^{n}-1}{\tau(g_{1})-1},
\end{multline*}
which for sufficiently large $n$ is less than $\epsilon_{2}^{n}$ by \eqref{fastdecline}. To prove the second part we change this argument to use $e_{k}$ instead of $g_{k}$, $P_{\lambda}$ instead of $Q_{\lambda}$, \eqref{Pieridual} instead of \eqref{Pieri}, and use the fact that $b_{\lambda} < C^{|\lambda|}$.
\end{proof}

\begin{proof}[Proof of Lemma \ref{lemma:strictpositivity}] Assume that
$\tau(g_{k})=0$. Then Lemma \ref{lemma:zerovalue} implies that $\tau(Q_{\lambda}) = 0$ for any $\lambda$ with $\lambda_{1} \geq k$. Then by \eqref{n-level}
\begin{align*}
\tau(g_{1})^{n} =  \sum_{|\lambda| = n, \ \lambda_{1} < k} \sum_{T \in \mathcal{ST}(\lambda)} \psi(T) \tau(Q_{\lambda}) \quad \text{for any } n.
\end{align*}
But each summand in the right hand side has $\lambda'_{1} \geq n/(k-1)$, hence this statement contradicts Lemma \ref{lemma:nolongrows}.
\end{proof}
\begin{remark}
In a similar manner one can prove that in fact $\tau(Q_{\lambda}) > 0$ for any partition $\lambda$, however, we don't need it at this stage of the proof of Lemma \ref{lemma:cancellation}.
\end{remark}

\begin{proof}[Proof of Lemma \ref{lemma:badsteps}]
It is enough to prove Lemma \ref{lemma:badsteps} only for $s=2$. Indeed, the general case follows, since by equation \eqref{Pieri} and Lemma \ref{lemma:psibound} we have 
\begin{align*}
\sum_{\mu \precd \lambda, \ |\lambda| - |\mu| = s} \tau(Q_{\la}) \leq C^{s-2}\tau(g_{1})^{s-2}\sum_{\mu \precd \lambda, \ |\lambda| - |\mu| = 2} \tau(Q_{\la}).
\end{align*}
Assume that for some $\epsilon > 0$ there is no such partition $\mu$ as in the statement of Lemma \ref{lemma:badsteps}. Consider a directed graph $\mathcal{D}$ with vertex set $ = \{\text{partitions } \lambda, \text{ such that } |\lambda| \text{ is even and } \tau(Q_{\lambda})>0\}$. Draw in $\mathcal{D}$ a directed edge $\mu \to \lambda$ if $\mu \precd \lambda$ and $|\lambda|-|\mu|=2$. Put on this edge weight $\tau(Q_{\lambda})/\tau(Q_{\mu})$. Define weight of any path $\emptyset \longrightarrow \lambda$ in $\mathcal{D}$ as the product of weights of all edges of this path. Then this weight is $\tau(Q_{\lambda})$. By our assumption for every vertex of $\mathcal{D}$ the sum of weights of outgoing edges is greater or equal than $\epsilon$. Hence for any $n \geq 1$ the sum of path weights over all paths   $\emptyset \longrightarrow \lambda$, over all $\lambda$ with $|\lambda| = 2n$, is greater or equal than $\epsilon^{n}$. To obtain contradiction we will show that this weighted sum in fact decays faster as $n \to \infty$.

Any path $\emptyset \longrightarrow \lambda$ in $\mathcal{D}$ corresponds to a filling of boxes of $\lambda$ with integers, which are weakly increasing along each row and down each column, such that for each $1 \leq m \leq |\lambda|/2$, entry $m$ appears exactly twice at boxes of distance $\leq d$. Denote by $\mathcal{SD}_{d}(\lambda)$ the set of all such fillings.  Then our assumption implies that
\begin{align}
\label{badstepsgrowth}
\sum_{|\lambda|=2n} \sum_{\ p: \ \emptyset \longrightarrow \lambda} \text{Weight}(p)=\sum_{|\lambda|=2n}\lvert \mathcal{SD}_{d}(\lambda) \rvert \tau(Q_{\lambda}) \geq \epsilon^{n} \quad \text{for any } n.
\end{align}
Clearly $\displaystyle \lvert \mathcal{SD}_{d}(\lambda) \rvert \leq \lvert\mathcal{ST}(\lambda)\rvert \leq C^{|\lambda|}\sum_{\ T \in \mathcal{ST}(\lambda)} \psi(T)$. By using Lemma \ref{lemma:nolongrows} we get 
\begin{align}
\label{badstepsgrowth2}
\sum_{|\lambda|=2n, \ \lambda_{1} < n}\lvert \mathcal{SD}_{d}(\lambda) \rvert \tau(Q_{\lambda}) \geq \epsilon^{n}/2 \quad \text{for all large enough } n.
\end{align}
For a partition $\lambda$ denote by $\lceil \lambda/2 \rceil$ such partition $\mu$, that $\mu_{j}' = \lceil \lambda'_{j}/2 \rceil$ for each $j \geq 1$.  Note that $|\lambda|/2 \leq \lvert\lceil \lambda/2 \rceil \rvert \leq  \left(|\lambda| + \lambda_{1}\right)/2$. To obtain a contradiction with \eqref{badstepsgrowth2} we will need the following two lemmas, which we will prove later. 
\begin{lemma}
\label{lemma:2tbaleauxbound}
There exists constant $E(d)$,  and a sequence of functions 
\begin{align*}
f_{\lambda}: \mathcal{SD}_{d}(\lambda) \to \bigcup_{\mu \subset \lceil \lambda/2 \rceil} \mathcal{ST}(\mu) \quad \text{for even } |\lambda|,
\end{align*} 
such that  $\displaystyle \sum_{|\lambda| \text{ even}}\lvert f_{\lambda}^{-1}(T) \rvert < E(d)^{|Sh(T)|}$ for any  standard tableau $T$.
\end{lemma}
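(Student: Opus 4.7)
The plan is to construct $f_\lambda$ by a column-halving procedure and bound the preimage sizes by a local reconstruction argument. View an element $F \in \mathcal{SD}_d(\lambda)$ as a chain of pair additions $\emptyset = \lambda^{(0)} \subset \lambda^{(1)} \subset \cdots \subset \lambda^{(n)} = \lambda$, where $n = |\lambda|/2$ and each $\lambda^{(m)} \setminus \lambda^{(m-1)}$ consists of two cells at distance $\leq d$. Set $\mu^{(m)} := \lceil \lambda^{(m)}/2 \rceil$. A short case analysis on the two cells added at step $m$ (same column vs.\ different columns, even vs.\ odd column lengths) shows that $(\mu^{(m)})_{m=0}^{n}$ is a nested chain of Young diagrams with $|\mu^{(m)} \setminus \mu^{(m-1)}| \in \{0,1,2\}$ and final shape $\lceil \lambda/2 \rceil$. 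The only slightly delicate point is when step $m$ adds cells in distinct columns $j_1 < j_2$: the addability of the cell in column $j_2$ forces $\lambda^{(m-1)\prime}_{j_2-1} > \lambda^{(m-1)\prime}_{j_2}$, which preserves the partition inequalities after halving. I would then refine $(\mu^{(m)})$ to a chain of single-cell additions (breaking ties in reading order when two halved cells appear at once) and define $f_\lambda(F)$ to be the resulting standard tableau of shape $\lceil \lambda/2 \rceil$.

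To bound $\sum_{|\lambda| \text{ even}} |f_\lambda^{-1}(T)|$ for a fixed $T \in \mathcal{ST}(\mu)$, I would reconstruct each $(\lambda, F)$ mapping to $T$ from $T$ together with a bounded amount of auxiliary data. Process the labels $s = 1, 2, \ldots, |\mu|$ of $T$ in order. The label at position $(r_s, j_s)$ records the moment when the $(2r_s-1)$-th row of column $j_s$ first appears in $\lambda$; this \emph{triggering cell} $(2r_s - 1, j_s)$ is added as part of a pair whose partner lies within distance $\leq d$, giving $O(d^2)$ choices. Pair steps of $F$ containing only one triggering cell require specifying the location of a non-triggering partner (again within distance $\leq d$), and "silent" pair steps (containing no triggering cell) add two non-triggering cells, each at an even-row position. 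The distance hypothesis implies that every non-triggering cell lies in a $d$-neighborhood of a triggering cell in the same or an adjacent column added not much earlier, so such data can be charged to a nearby label of $T$ with only $O(d^2)$ additional local choices. Multiplying over the $|\mu|$ labels yields a bound $|f_\lambda^{-1}(T)| \leq E(d)^{|\mu|}$ uniformly in $\lambda$, which immediately dominates the sum over $\lambda$.

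The main obstacle is controlling the silent pair steps: \emph{a priori} their number can grow with $|\lambda| \gg |\mu|$, so a naive reconstruction would give a bound scaling with $n = |\lambda|/2$ rather than with $|\mu|$, which is insufficient since one must sum over all $\lambda$. The resolution is structural: the distance constraint $\leq d$ forces every non-triggering cell (an even-row entry) to sit within a bounded neighborhood of a triggering event, so the entire silent configuration can be reconstructed from local choices attached to the labels of $T$. Making this precise requires a careful bookkeeping of which triggering label each non-triggering cell is charged to, and showing that no label is charged more than a constant (depending on $d$) number of times. Once this charging scheme is verified, the bound $E(d)^{|\mu|}$ with $E(d)$ polynomial in $d$ follows.
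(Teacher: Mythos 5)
Your construction of $f_\lambda$ is genuinely different from the paper's: you push the whole chain $\lambda^{(0)}\subset\cdots\subset\lambda^{(n)}=\lambda$ through the column-halving $\mu^{(m)}:=\lceil\lambda^{(m)}/2\rceil$ and refine to a standard tableau of shape $\lceil\lambda/2\rceil$, whereas the paper first chooses, via a bipartite-matching argument, one surviving entry per \emph{domino} $\{(2i-1,j),(2i,j)\}$ (the map $f_\lambda^1$) and then rectifies that half-filling by an explicit two-choice sliding algorithm ($f_\lambda^2$), obtaining a standard tableau of size exactly $n=|\lambda|/2$ and shape $\subset\lceil\lambda/2\rceil$. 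The chain-halving idea is plausible and your verification that each $\mu^{(m)}$ is a partition (monotonicity of $\lceil\cdot/2\rceil$) is correct.

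The counting half of your argument, however, has a real gap, and the ``main obstacle'' you describe is not the right obstacle. First, $|\lambda|\gg|\mu|$ cannot happen: since you set $\mu=\lceil\lambda/2\rceil$, one has $|\mu|=\sum_j\lceil\lambda'_j/2\rceil\ge|\lambda|/2=n$, so the number of silent pair-steps is automatically $\le n/2\le|\mu|/2$ and a per-step constant would already give $E(d)^{|\mu|}$. Second, the statement you actually need --- that every non-triggering (even-row) cell ``lies in a $d$-neighborhood of a triggering cell \ldots\ added not much earlier'' --- is false. A non-triggering cell $(2i,j)$ is always spatially adjacent to the triggering cell $(2i-1,j)$, but the latter can have been added arbitrarily many steps earlier in the chain, so there is no temporal locality to exploit, and the asserted charging scheme does not obviously bound the number of ways to interleave silent steps among the others or to choose which non-triggering cells are paired with which. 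The paper sidesteps the issue of reconstructing the chain entirely: having specified $\lambda$ ($\le 3^{2n}$ choices), the set $\mathcal{J}$ of surviving boxes ($\le 2^{2n}$), the rectification history ($\le 2^{2n}$, two choices per slide), and a partner within a $(2d+1)\times(2d+1)$ window for each of the $n$ labels ($\le(2d+1)^{2n}$), it recovers the element of $\mathcal{SD}_d(\lambda)$ directly as a static object, with no need to track pair-step ordering. If you want to keep your construction, you would need to replace the local charging heuristic with an explicit enumeration of (i) the interleaving pattern of silent vs.\ non-silent steps, (ii) the assignment of non-triggering cells to their partners, and (iii) the subset of trigger cells that form double-trigger steps --- each of which is bounded by $c(d)^{|\mu|}$, but none of which follows from what you wrote, and step (i) in particular is not a local choice attached to a label of $T$.
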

Lemma \ref{lemma:2tbaleauxbound} implies the following upper bound on $\lvert \mathcal{SD}_{d}(\lambda) \rvert$: 
\begin{align}
\label{2tableauxbound}
\lvert \mathcal{SD}_{d}(\lambda) \rvert \leq \sum_{\mu \subset \lceil \lambda/2 \rceil} E(d)^{|\mu|} \lvert \mathcal{ST}(\mu) \rvert
\end{align}
\begin{lemma}
\label{lemma:comparison}
For every $\delta > 0$ there is such $N$, that $\tau(Q_{\lambda}) \leq \delta^{|\lambda|-|\mu|}\tau(Q_{\mu})$ for any partitions $\mu \subset \lambda$ with $|\lambda|-|\mu| > N$.
\end{lemma}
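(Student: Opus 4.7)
The plan is to bound the ratio $\tau(Q_\lambda)/\tau(Q_\mu)$ by iteratively peeling off large horizontal or vertical strips from $\lambda/\mu$, controlling each peel using the Pieri formulas \eqref{Pieri}, \eqref{Pieridual} together with Lemma \ref{lemma:psibound}, and exploiting the super-exponential decay of $\tau(g_r)$ and $\tau(e_r)$ from \eqref{fastdecline1}. Concretely I plan to build a chain $\mu = \nu^{(0)} \subset \nu^{(1)} \subset \cdots \subset \nu^{(I)} \subset \lambda$ in which every successive pair differs by a strip whose size is at least $\sqrt{n_i}$, where $n_i := |\lambda|-|\nu^{(i)}|$.

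The combinatorial heart of this construction is the elementary observation that if $\lambda/\nu$ touches $a$ distinct rows and $b$ distinct columns then $n' := |\lambda/\nu| \leq ab$, so $\max(a,b) \geq \sqrt{n'}$. Moreover, a horizontal strip inside $\lambda/\nu$ of size exactly $b$ always exists: setting $\xi_1 := \lambda_1$ and $\xi_i := \min(\nu_{i-1}, \lambda_i)$ for $i \geq 2$, one verifies that $\xi$ is a partition with $\nu \prech \xi \subset \lambda$ whose added boxes are the topmost box of each touched column of $\lambda/\nu$; by transposition, a vertical strip of size $a$ also exists. At each iteration I will choose whichever of the two strips is larger.

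For a horizontal step of size $b_i$ the Pieri formula \eqref{Pieri} yields $\tau(Q_{\nu^{(i+1)}}) \leq \tau(g_{b_i})/\psi_{\nu^{(i+1)}/\nu^{(i)}}\cdot\tau(Q_{\nu^{(i)}})$, and since the strip occupies at most $b_i$ rows, Lemma \ref{lemma:psibound} gives $\psi_{\nu^{(i+1)}/\nu^{(i)}} \geq C^{-b_i}$; the vertical case is identical with $\tau(e_{a_i})$ and $\phi'_{\nu^{(i+1)}/\nu^{(i)}} \geq C^{-a_i}$. Given $\delta$, I will fix $\epsilon > 0$ with $C\epsilon \leq \delta/2$, extract from \eqref{fastdecline1} a threshold $R_0$ with $\tau(g_r), \tau(e_r) \leq \epsilon^r$ for $r \geq R_0$, and iterate as long as $n_i \geq K := R_0^2$; while the iteration runs each step contributes a factor of at most $(C\epsilon)^{b_i}$, and telescoping gives $\tau(Q_{\nu^{(I)}}) \leq (C\epsilon)^{n-n_I}\tau(Q_\mu)$ with $n_I < K$.

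For the remaining tail of at most $K$ boxes I will apply $g_1$-Pieri one box at a time; each single-box horizontal step has $\psi \geq C^{-1}$ by Lemma \ref{lemma:psibound}, giving $\tau(Q_\lambda) \leq (C\tau(g_1))^K\tau(Q_{\nu^{(I)}})$. Combining yields $\tau(Q_\lambda) \leq (C\tau(g_1))^K(C\epsilon)^{n-K}\tau(Q_\mu) \leq \delta^n\tau(Q_\mu)$ once $n$ exceeds a threshold $N$ depending only on $K, \epsilon, \delta, \tau(g_1)$. The step I expect to be the main obstacle is the combinatorial lemma in the second paragraph, namely verifying that $\xi$ defined by $\xi_1 = \lambda_1$, $\xi_i = \min(\nu_{i-1}, \lambda_i)$ really is a partition contained in $\lambda$ and that $\xi/\nu$ is a horizontal strip of exactly $b$ boxes, together with making sure $R-C$ contains at most $b_i$ rows for each such step. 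Once those points are settled, the telescoping Pieri estimate and the final choice of constants $\epsilon, K, N$ are routine bookkeeping.
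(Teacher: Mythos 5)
Your proposal is correct, and it takes a genuinely different route from the paper's proof. The paper divides into three cases according to how many boxes of $\lambda/\mu$ lie ``deep'' inside rows (i.e.\ $\lambda_i-j\geq k$) or ``deep'' inside columns; the first two cases are handled by row-by-row (resp.\ column-by-column) Pieri expansion in which at least a quarter of the boxes sit in long rows (resp.\ columns), and in the residual case the paper builds a shell decomposition $S_1\cup\dots\cup S_{k^2}$ of the remaining rim-thick skew shape, each $S_m$ both a horizontal and a vertical strip, and then invokes the decay of $\tau(g_{|S_{m^*}|})$ for the single largest shell. Your approach replaces this case analysis and shell construction with a single iteration: at every step you peel off either a horizontal strip hitting every touched column or a vertical strip hitting every touched row of the remaining skew shape, and the pigeonhole $|\lambda/\nu|\leq ab$ guarantees the peeled strip always has size at least $\sqrt{n_i}$, so the super-exponential decay kicks in at every step rather than only on one designated large piece. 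Your combinatorial construction of the strip is indeed correct: $\xi_1:=\lambda_1$, $\xi_i:=\min(\nu_{i-1},\lambda_i)$ defines a partition with $\nu\prech\xi\subset\lambda$ whose extra boxes are exactly the topmost box of each touched column, so $|\xi/\nu|=b$; this is a short monotonicity check (each inequality $\nu_i\leq\xi_i\leq\lambda_i$ and $\xi_{i+1}\leq\nu_i$ follows from $\nu\subset\lambda$ and the definition of $\min$), and the transposed statement for vertical strips is immediate. The Pieri and Lemma~\ref{lemma:psibound} steps, the telescoping, and the constant bookkeeping at the end all go through as you sketched. What your route buys is the elimination of the two separate long-row/long-column cases and of the shell count $k^2$; what the paper's route buys is that any one of $g$ or $e$ suffices within a case, rather than switching between them adaptively. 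Both are valid, and yours is arguably the cleaner of the two.
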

Suppose that both Lemmas \ref{lemma:2tbaleauxbound} and \ref{lemma:comparison} are true.  Then for any $\delta>0$ for all large enough $n$ we get 
\begin{multline*}
\sum_{|\lambda|=2n, \ \lambda_{1} < n}\lvert \mathcal{SD}_{d}(\lambda) \rvert \tau(Q_{\lambda}) \leq \sum_{|\lambda|=2n, \ \lambda_{1} < n} \sum_{\ \mu \subset \lceil \lambda/2 \rceil} E(d)^{|\mu|}\lvert \mathcal{ST}(\mu) \rvert \tau(Q_{\mu}) \delta^{|\lambda|-|\mu|}  \\
\leq E(d)^{2n}\delta^{n/2} \sum_{|\lambda|=2n, \ \lambda_{1} < n} \sum_{\ \mu \subset \lceil \lambda/2 \rceil} \lvert \mathcal{ST}(\mu) \rvert \tau(Q_{\mu})
\leq E(d)^{2n}\delta^{n/2}\frac{\tau(g_{1})^{2n}C^{2n}-1}{\tau(g_{1})C-1},
\end{multline*}
since by \eqref{n-level} and Lemma \ref{lemma:psibound}
\begin{align*}
\sum_{|\mu|=m}\lvert \mathcal{ST}(\mu) \rvert \tau(Q_{\mu}) \leq \tau(g_{1})^{m}C^{m}.
\end{align*}
But by choosing small enough $\delta$ we get a contradiction with \eqref{badstepsgrowth2}.

\end{proof}

\begin{proof}[Proof of Lemma \ref{lemma:2tbaleauxbound}]
To construct $f_{\lambda}(T)$, $T \in \mathcal{SD}_{d}(\lambda)$, we will delete some entries of $T$ and move the remaining entries to form a standard tableau of smaller size. We need to show that we can do it in such a way, so that not to lose too much information. More precisely,  $f_{\lambda}$ will be defined for  $|\lambda|=2n$ as a composition of two maps $f_{\lambda}:= f_{\lambda}^{2} \circ f_{\lambda}^{1}$.

\smallskip

\textbf{Part 1 (construction of $f_{\lambda}^{1}$).} For $1 \leq j \leq \lambda_{1}$ and $1 \leq i \leq \lfloor \lambda_{j}'/2 \rfloor$ call the set of two boxes $\left\{(2i-1, j), (2i, j)\right\}$ a {\it domino}.
$f_{\lambda}^{1}$ will take as an input any $T \in \mathcal{SD}_{d}(\lambda)$ and will produce as an output a collection $\mathcal{J}$ of $n$ boxes of $\lambda$ together with a filling of boxes of  $\mathcal{J}$ with integers $1, 2, \ldots, n$, each appearing exactly once, such that 
\begin{enumerate}
\item
Each domino contains exactly one box of $\mathcal{J}$. We will call this property {\it halfness}.

\item
If  $(i_{1}, j_{1}), (i_{2}, j_{2}) \in \mathcal{J}$, with $i_{1} \leq i_{2}$, $j_{1} \leq j_{2}$ and $(i_{1}, j_{1}) \neq (i_{2}, j_{2})$, have entries $e_1$ and $e_2$ respectively, then $e_{1} < e_{2}$. We will call this property {\it monotonicity}.
\end{enumerate}

To construct $f_{\lambda}^{1}$ consider a bipartite graph $G$, with the vertex set that is the union of the set of all dominoes and the set $\{1, 2, \ldots, n\}$.  Connect a domino $D$ and integer $k$ by an edge if at least one of the entries in $D$ is equal to $k$. Each $D$ has degree $1$ or $2$ in $G$, while each $k$ has degree $0$, $1$ or $2$. It is easy to see that each connected component of $G$ (which is not an integer - isolated vertex) is of one of the following three types:
\begin{enumerate}
\item
A single edge. It happens precisely when $D$ contains two entries equal to $k$. In such case delete one of these entries. 

\item
A simple cycle. In such case delete all the entries corresponding to even edges of the cycle (start numbering from any edge).

\item
A path of even length starting and ending in $\{1, 2, \ldots, n\}$. In such case  delete all the entries corresponding to even edges of the path (start numbering from any end).   
\end{enumerate} 
After the completion of such procedure each domino contains exactly one entry and no two dominoes contain the same integer. There still might be integers that appear in the tableau twice. For each such integer $k$ just delete one of the entries equal to $k$ that is not in any of the dominoes. The described construction involves making some choices, but we can just make any so that the rule for $f_{\lambda}^{1}$ is well-defined. Monotonicity of the image is obvious.
\begin{figure}[h]
\includegraphics[width = 
0.9\textwidth]{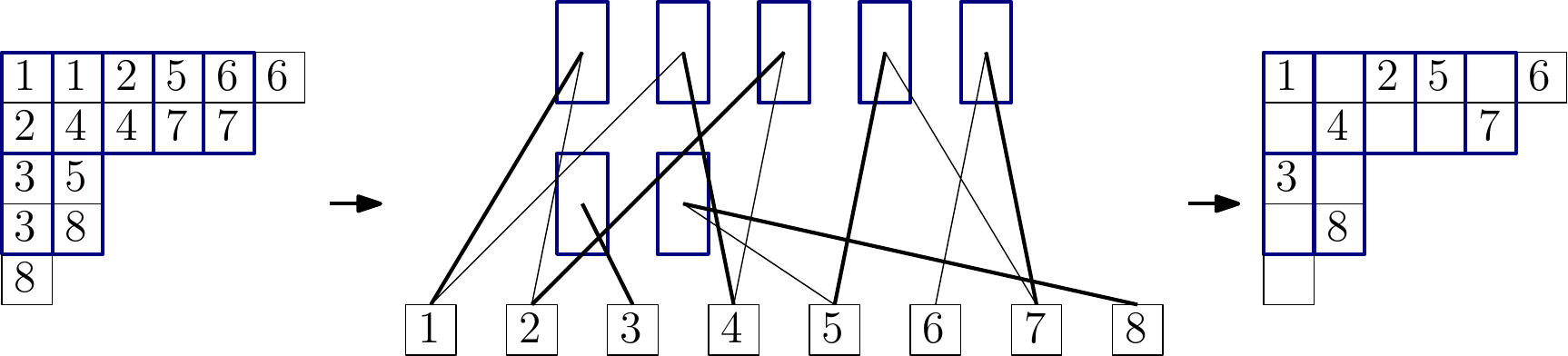}
\caption{Construction of $f^{1}_{\lambda}$. An example.}
\label{f1}
\end{figure}

\smallskip

\textbf{Part 2 (construction of $f_{\lambda}^{2}$).}
$f_{\lambda}^{2}$ will take as an input an output of $f_{\lambda}^{1}$ and will produce as an output  a standard tableau of size $n$ and shape $\subset \lceil \lambda/2 \rceil$. It will be a result of moving the remaining entries according to the  algorithm described below, so that monotonicity will still be satisfied after every step. Call an entry a {\it north} entry if it is located in an odd row, and a {\it south} entry if it is located in an even row. Order boxes lexicographically, i.e say that $(i_{1}, j_{1}) < (i_{2}, j_{2})$ if either $i_{1} < i_{2}$ or $i_{1}=i_{2}$ and $j_{1}< j_{2}$. 

\begin{enumerate}
\item
If all entries are north entries, then in each column of $\lambda$ move all entries to stack them at the top (preserving their order). If there are such entries that before this operation used to be the in the lowest box of a column of odd length and now have an empty box on the left, then in each row move all such entries as much to the left as possible (preserving their order). See Fig. \eqref{Step1}. We get a standard tableau of shape $\mu \subset \lceil \lambda/2 \rceil$ and size $|\mu|=n$, and the algorithm terminates. If there is at least one south entry, then go to step 2.
\item
Find the smallest box with a south entry. Denote this entry by $a$ and its box by $(2i, j)$. If there is no north entry with position $(2i-1, j')$ for some $j' > j$, then move $a$ to position $(2i-1, j)$ (it becomes a north entry) and go to step 1. Otherwise, go to step 3.

\item
Find the smallest $j' > j$ such that $(2i-1, j')$ has an entry. Denote this entry by $b$. If $b > a$, then move $a$ to position $(2i-1, j)$ (it becomes a north entry) and go to step 1. Otherwise, go to step 4.

\item
If $b < a$, then move $b$ to position $(2i-1, j)$ and move each entry in position $(2i-1, j'')$ for $j \leq j'' < j'$ (including $a$ itself) to position $(2i-1, j''+1)$. Note that some entries might temporarily move outside of $\lambda$. Go to step 2.
\end{enumerate}
\begin{figure}[h]
\includegraphics[width = 
0.8\textwidth]{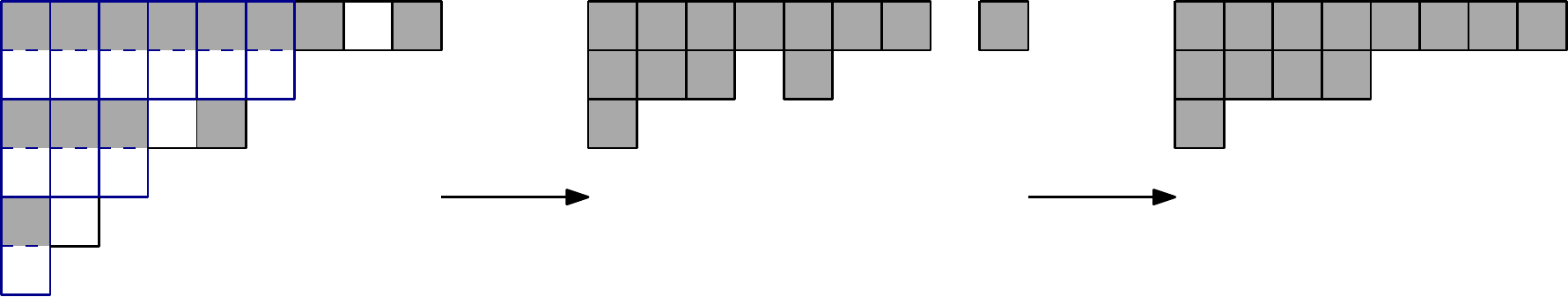}
\caption{Step 1 of the algorithm.}
\label{Step1}
\end{figure}
\begin{figure}[h]
\includegraphics[width = 
0.8\textwidth]{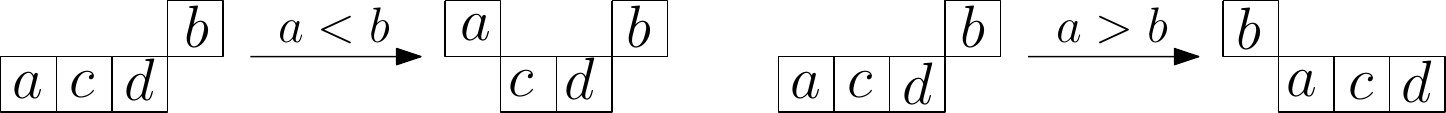}
\caption{Steps 3 and 4 of the algorithm.}
\label{Step34}
\end{figure}
Clearly, all steps of the algorithm but the final preserve halfness.  It is also straightforward to check that steps 1, 2, 3 preserve monotonicity. Let's verify it also for step 4.  After this step any entry in a box $(p, q)$ with $p \geq 2i$ and $q \geq j$ is greater than $b$, since it was $\geq a$. Any entry in a box $(p, q)$ with $p \leq 2i-2$ and $j < q \leq j'$ was $<b$, so it is less than the current entry in the box $(2i, q)$. The order in each of the rows $2i-1$ and $2i$ is preserved, the rest is straightforward.

\smallskip

After at most $ \sum_{i \geq 1} \lambda_{2i-1} \leq 2n$ moves the algorithm arrives to the stage when all south entries are eliminated. Indeed, if at some point $(2i, j)$ is the smallest box with a south entry, then $j \leq \lambda_{2i-1}$, and after the next move the smallest box with a south entry will be larger. We now fix some $T \in \mathcal{ST}(\mu)$ for a partition $\mu$ of size $n$ and show that $\sum_{|\lambda| \text{ even}}\lvert f_{\lambda}^{-1}(T) \rvert$ grows at most exponentially in $n$. For a partition $\lambda$ of size $2n$ and a collection of its boxes $\mathcal{J}$ of size $n$ that satisfies halfness denote by $N\left(\lambda, \mathcal{J}\right)$ the number of such $S \in \mathcal{SD}_{d}(\lambda)$ that $f_{\lambda}(S) = T$ and $\mathcal{J}$ is precisely the collection of nonempty boxes  of  $f_{\lambda}^{1}(S)$. Denote by $M(\mathcal{J})$ the number of such fillings of boxes of $\mathcal{J}$ with integers $1, 2, \ldots, n$, each appearing exactly once, that they satisfy monotonicity and are mapped to $T$ by $f^{2}_{\lambda}$. The described above algorithm has the property that if at some moment the set of nonempty boxes  is $\mathcal{J'}$, then there are at most two ways in which boxes of $\mathcal{J'}$ can move after the next step. If we know all the choices we made when moving boxes of $\mathcal{J}$, then we can recover from $T$ the filling of $\mathcal{J}$. Hence we have $\lvert M(\mathcal{J}) \rvert \leq 2^{2n}$. For a given box $B$ any box outside of the $(2d+1) \times (2d+1)$ square centered at $B$ has distance $> d$ from $B$. Hence given a filling of $\mathcal{J}$, there are at most $(2d+1)^{2n}$ ways to fill $\lambda \backslash \mathcal{J}$ with $1, 2, \ldots, n$ to get an element of $\mathcal{SD}_{d}(\lambda)$. So $N(\lambda, \mathcal{J}) \leq (2d+1)^{2n} M(\mathcal{J})$. We have $ \leq 3^{2n}$ choices for $\lambda$, since each $\lambda_{j}'$ is $2\mu'_{j}$ or $2\mu'_{j} \pm 1$. Given $\lambda$ there are  $\leq 2^{2n}$ choices for $\mathcal{J}$.  Hence 
\begin{align*}
\sum_{|\lambda| \text{ even}}\lvert f_{\lambda}^{-1}(T) \rvert = \sum_{\lambda} \sum_{\mathcal{J}} N(\lambda, \mathcal{J}) \leq (2d+1)^{2n} 12^{2n}.
\end{align*}
\end{proof}

\begin{proof}[Proof of Lemma \ref{lemma:comparison}]
Let $\displaystyle c: = \max \left \{\{1\} \cup \left\{ \tau(g_{r})^{1/r} \right\}_{r\geq 1} \cup \left\{ \tau(e_{r})^{1/r} \right\}_{r \geq 1} \right\}$. Find $k$, such that 
\begin{align*}
\tau(g_{r})^{1/r}, \tau(e_{r})^{1/r} < \epsilon:=\delta^{4}c^{-4}C^{-4} \text{ for any } r \geq k.
\end{align*} 
Denote by $\nu_{h}$ the partition that is the union of $\mu$ and those boxes $(i, j)$ of $\lambda\backslash\mu$ for which $\lambda_{i} - j \geq k$. To check that $\nu_{h}$ is indeed a partition note that if $\lambda_{i} - j \geq k$, then both $\lambda_{i-1} - j \geq k$ and $\lambda_{i} - (j-1) \geq k$. Similarly, denote by $\nu_{v}$ the partition that is the union of $\mu$ and those boxes $(i, j)$ of $\lambda\backslash\mu$ for which $\lambda'_{j} - i \geq k$. Let $\nu: = \nu_{h} \cup \nu_{v}$. Note that $ \displaystyle \sum_{i: \lambda_{i}-\mu_{i} \geq k} \left( \lambda_{i} -\mu_{i} \right) \geq |\nu_{h}| - |\mu|$, since each box of $\nu_{h} \backslash \ \mu$ lies in some row $i$, such that $\lambda_{i} - \mu_{i} \geq k$. 
If $|\nu_{h}| - |\mu| \geq (|\lambda|-|\mu|)/4$,  then by \eqref{Pieri} we have 
\begin{align*}
 \tau(Q_{\lambda}) \leq C^{|\lambda|-|\mu|} \left(\prod_{i \geq 1} \tau(g_{\lambda_{i}-\mu_{i}})\right) \tau(Q_{\mu}) \leq 
\epsilon^{(|\lambda|-|\mu|)/4}C^{|\lambda|-|\mu|}c^{|\lambda|-|\mu|}\tau(Q_{\mu}) =  \delta^{|\lambda|-|\mu|}\tau(Q_{\mu}). 
\end{align*}
If $|\nu_{v}| - |\mu| \geq (|\lambda|-|\mu|)/4$, then similarly by \eqref{Pieridual} we have 
\begin{align*}
 \tau(Q_{\lambda}) \leq C^{|\lambda|-|\mu|} \left(\prod_{j \geq 1} \tau(e_{\lambda'_{j}-\mu'_{j}})\right) \tau(Q_{\mu}) \leq 
\epsilon^{(|\lambda|-|\mu|)/4}C^{|\lambda|-|\mu|}c^{|\lambda|-|\mu|}\tau(Q_{\mu}) =  \delta^{|\lambda|-|\mu|}\tau(Q_{\mu}). 
\end{align*}
If both $|\nu_{h}| - |\mu| \leq (|\lambda|-|\mu|)/4$ and $|\nu_{v}| - |\mu| \leq (|\lambda|-|\mu|)/4$, then $|\nu| - |\mu| \leq (|\lambda|-|\mu|)/2$, hence $|\lambda| - |\nu| \geq (|\lambda|-|\mu|)/2$. We will now consider this case. Denote by $\mathcal{M}$ the set of such boxes $(i, j)$ that $i \leq 0$ or $j \leq 0$. Let 
\begin{align*}
S_{1} := \{(i, j) \in \lambda \backslash \nu:  (i-1, j) \in \nu \cup \mathcal{M} , \ (i, j-1) \in \nu \cup \mathcal{M}\}.
\end{align*}
Then $\nu \cup S_{1}$ is a partition and $S_{1}$ is both a vertical and a horizontal strip. Let 
\begin{align*}
S_{2} := \{(i, j) \in \lambda \backslash \left(\nu \cup S_{1}\right):  (i-1, j) \in \nu \cup S_{1} \cup \mathcal{M} , \ (i, j-1) \in \nu \cup S_{2} \cup \mathcal{M}\}.
\end{align*}
Then $\nu \cup S_{1} \cup S_{2}$ is a partition and $S_{2}$ is both a vertical and a horizontal strip. Similarly define $S_{3}, S_{4}, \ldots$. Let $\displaystyle U_{m}:= \nu \cup \bigcup_{\ell=1}^{m} S_{\ell}$. We will show that $\lambda = U_{k^{2}}$. Indeed, suppose that $s = (i, j)$ is a box in $\lambda\backslash\nu$ and consider the set $\mathcal{B}$ of boxes $(i', j')$ of $\lambda\backslash\nu$ with $i' \leq i$ and $j' \leq j$. Then $\lambda_{i} - \nu_{i} \leq k$, since otherwise we would have $(i, \nu_{i}+1) \in \nu_{h}$. Similarly $\lambda_{j}'-\nu_{j}' \leq k$, hence $|\mathcal{B}| \leq k^{2}$. For each $m$ such that $s \notin U_{m-1}$, $S_{m}$ contains at least one box from $\mathcal{B}$. Indeed, we can choose a box $b = (i'', j'') \in \mathcal{B} \backslash U_{m-1}$, such that $(i''-1, j'') \in U_{m-1} \cup \mathcal{M}$ and $(i'', j''-1) \in U_{m-1} \cup \mathcal{M}$.  Then $b \in S_{m}$.  Hence $s \in U_{k^{2}}$. Then by \eqref{Pieri} we have
\begin{multline}
\label{maxbound}
\tau(Q_{\lambda}) \leq C^{|\lambda|-|\nu|} \left(\prod_{m = 1}^{k^{2}} \tau(g_{|S_{m}|})\right) \tau(Q_{\nu})  \leq C^{|\lambda|-|\mu|} c^{|\nu|-|\mu|} \left(\prod_{m = 1}^{k^{2}} \tau(g_{|S_{m}|})\right) \tau(Q_{\mu}) \\ \leq C^{|\lambda|-|\mu|} c^{|\lambda|-|\mu|} \tau(g_{\max_{1 \leq m \leq k^{2}} \{|S_{m}|\} }) \tau(Q_{\mu}). 
\end{multline}
Here we used the fact that by \eqref{Pieri} and Lemma \ref{lemma:psibound} we have
\begin{align*}
\tau (Q_{\nu}) \leq \tau(g_{1})^{|\nu|-|\mu|}C^{|\nu|-|\mu|}\tau(Q_{\mu}) \leq (Cc)^{|\nu|-|\mu|}\tau(Q_{\mu}).
\end{align*}
But $\displaystyle \max_{1 \leq m \leq k^{2}} \{|S_{m}|\} \geq \left(|\lambda| - |\nu|\right)/k^{2} \geq \left(|\lambda|-|\mu|\right)/\left(2k^{2}\right)$, hence by \eqref{fastdecline} the right hand side of \eqref{maxbound} will be $\leq \delta^{|\lambda|-|\mu|} \tau\left(Q_{\mu}\right)$ for all sufficiently large $|\lambda|-|\mu|$.
 
\end{proof}

\section{Boundary of the Young graph with Macdonald multiplicities}
\label{sec:boundary}
In this section we prove Proposition \ref{prop:boundary}. The proof is a modification of the proof of the Vershik-Kerov ring theorem, see \cite[pp. 50-51]{BO} for the later.
\begin{proof}[Proof of Proposition \ref{prop:boundary}]
Suppose $f \in \mathcal{M}_{q, t}$. To show that $\widetilde{f}$ is a homomorphism it is enough to show that $\widetilde{f}(Q_{\mu}g_{k}) = \widetilde{f}(Q_{\mu}) \widetilde{f}(g_{k})$ for any $\mu$ and $k$. It holds for $k=1$, since $f$ is harmonic (note that $g_{1} = \frac{1-t}{1-q}h_{1}$). Fix $k \geq 2$. Suppose first that $\widetilde{f}(g_{k}) = 0$, then $\widetilde{f}(Q_{\nu}) = 0$ for any $(k) \subset \nu$. The proof is exactly the same as that of Lemma \ref{lemma:zerovalue}.
Since $Q_{\mu}g_{k}$ is is a linear combination of such $Q_{\nu}$ by \eqref{Pieri}, it follows that in this case $\widetilde{f}(Q_{\mu}g_{k}) = 0$. Suppose now that $\widetilde{f}(g_{k}) > 0$.
By Lemma \ref{lemma:psibound} we can find constants $c_{1}, c_{2} > 0$,  such that 

\begin{enumerate}
\item
$\psi_{\lambda/\mu} < c_{1}$, whenever $\lambda/\mu$ is a horizontal $k$-strip. 

\item
$\psi_{\lambda/\mu} > c_{2}$, whenever $\mu \nearrow \lambda$.
\end{enumerate}
Pick $0 < c < \min \left \{c_{2}^{k}/c_{1}, 1/\widetilde{f}(g_{k}) \right \}$. Then it follows from \eqref{Pieri} that  $Q_{\mu} \left(g_{1}^{k} - cg_{k}\right)$ for any $\mu$ can be expanded as a non-negative linear combination of $Q_{\nu}$. Let $h_{1}, h_{2}: \Lambda \to \mathbb{R}$ be functionals defined by 
\begin{align*}
h_{1}\left(Q_{\mu}\right) := \widetilde{f}\left(Q_{\mu}g_{k}\right)/\widetilde{f}\left(g_{k}\right), \quad  h_{2}\left(Q_{\mu}\right) := \widetilde{f}\left(Q_{\mu}(g_{1}^{k} - cg_{k})\right)/\widetilde{f}\left(g_{1}^{k} - cg_{k}\right).
\end{align*} 
Then $h_{1}, h_{2}$ correspond to non-negative harmonic functions on $\mathcal{Y}_{q, t}$,
\begin{multline*}
\widetilde{f} = c\widetilde{f}\left(g_{k}\right) \cdot h_{1} + \widetilde{f}\left(g_{1}^{k} - cg_{k}\right) \cdot h_{2}, \qquad c\widetilde{f}\left(g_{k}\right) > 0, \\ \widetilde{f}\left(g_{1}^{k} - cg_{k}\right) = 1 -c \widetilde{f}(g_{k}) > 0, \qquad \widetilde{f}\left(cg_{k}\right) + \widetilde{f}\left(g_{1}^{k} - cg_{k}\right) = 1.
\end{multline*}
Since $\widetilde{f}$ is extreme, it follows that $\widetilde{f} = h_{1}$, hence $\widetilde{f}(Q_{\mu}g_{k}) = \widetilde{f}(Q_{\mu}) \widetilde{f}(g_{k})$. 

It remains to show that if $\theta: \Lambda \to \mathbb{R}$ is a $(q, t)$-Macdonald-positive specialization with $\theta(g_{1}) = 1$, then it corresponds to an extreme point of $\mathcal{H}_{q, t}$. This part of the argument exactly follows \cite[p. 51]{BO}. We briefly reproduce it here for the reader's convenience. By Choquet's theorem, \cite[p. 49]{BO}, there is a probability measure $P$ supported on $\mathcal{M}_{q, t}$, such that 
\begin{align}
\label{integral}
\theta(g) = \int_{f \in \mathcal{M}_{q, t}} \widetilde{f}(g) P(df), \qquad g \in \Lambda.
\end{align}
We need to show that $P$ is in fact a delta-measure at a point of $\mathcal{M}_{q, t}$. Let $\zeta_{g}$ denote the function $f \to \widetilde{f}(g)$ viewed as a random variable defined on the probability space $(\mathcal{M}_{q, t}, P)$. By \eqref{integral} the mean of $\zeta_{g}$ is $\theta(g)$. The variance of $\zeta_{g}$ is 
\begin{align*}
 \int_{f \in \mathcal{M}_{q, t}} \widetilde{f}(g)^{2} P(df) - \theta(g)^{2} =  \int_{f \in \mathcal{M}_{q, t}} \widetilde{f}\left(g^{2} \right) P(df) - \theta\left(g^{2}\right) = 0,
\end{align*}
since $\widetilde{f}$ is multiplicative for any $f \in \mathcal{M}_{q, t}$ as we have seen in the first part of this proof. It follows that $f(g) = \theta(g)$ outside  a $P$-null subset of $\mathcal{M}_{q, t}$ (depending on $g$). So $\theta(Q_{\lambda}) = f(Q_{\lambda})$ for all partitions $\lambda$ and all $f \in \mathcal{M}_{q, t}$  outside a $P$-null subset. This implies that $P$ is a delta-measure.
 
\end{proof}


\end{document}